\def\rlwd{.6pt}
\def\rlht{1.3pt}
\def\shatvrule{\rule{\rlwd}{\rlht}}
\def\dhat#1{%
 \ThisStyle{%
  \setbox0=\hbox{$\SavedStyle#1$}%
  \stackon[0pt]{\stackon[1.2pt]{\ensuremath{\SavedStyle#1}}{%
    \shatvrule\kern\wd0\kern-\rlwd\kern-\rlwd\shatvrule}}%
    {\rule{\wd0}{\rlwd}}%
 }%
}
\def\uhat#1{%
 \ThisStyle{%
  \setbox0=\hbox{$\SavedStyle#1$}%
  \stackon[-1.5pt]{\stackon[1.2pt]{\ensuremath{\SavedStyle#1}}{%
    \shatvrule\kern\wd0\kern-\rlwd\kern-\rlwd\shatvrule}}%
    {\rule{\wd0}{\rlwd}}%
 }%
}
\newtheoremstyle{dotless}{}{}{\itshape}{}{\bfseries}{}{}{}
\theoremstyle{dotless}
\theoremstyle{plain}
\newtheorem{thm}{Theorem}[section]
\newtheorem{prop}[thm]{Proposition}
\newtheorem{cor}[thm]{Corollary}
\theoremstyle{definition}
\newtheorem{defn}[thm]{Definition}
\newtheorem{rem}[thm]{Remark}
\newtheorem{exa}[thm]{Example}
\newtheorem{prob}[thm]{Problem}
\newcommand{\N} {\mathbb{N}}
\newcommand{\K} {\mathbb{K}}
\newcommand{\R} {\mathbb{R}}
\newcommand{\C} {\mathbb{C}}
\newcommand{\e}{\mathrm{e}}
\DeclareMathOperator{\id}{id}
\providecommand{\differential}{\mathrm{d}}
\renewcommand{\d}{\differential}
\newcommand\rlim{
\mathchoice{\vcenter{\hbox{${\scriptstyle{+}}$}}}
{\vcenter{\hbox{$\scriptstyle{+}$}}}
{\vcenter{\hbox{$\scriptscriptstyle{+}$}}}
{\vcenter{\hbox{$\scriptscriptstyle{+}$}}}}
\newcommand\llim{
\mathchoice{\vcenter{\hbox{${\scriptstyle{-}}$}}}
{\vcenter{\hbox{$\scriptstyle{-}$}}}
{\vcenter{\hbox{$\scriptscriptstyle{-}$}}}
{\vcenter{\hbox{$\scriptscriptstyle{-}$}}}}
\newcommand{\vertiii}[1]{{\left\vert\kern-0.25ex\left\vert\kern-0.25ex\left\vert #1 
    \right\vert\kern-0.25ex\right\vert\kern-0.25ex\right\vert}}
\DeclareRobustCommand\widecheck[1]{{\mathpalette\@widecheck{#1}}}
\def\@widecheck#1#2{%
    \setbox\z@\hbox{\m@th$#1#2$}%
    \setbox\tw@\hbox{\m@th$#1%
       \widehat{%
          \vrule\@width\z@\@height\ht\z@
          \vrule\@height\z@\@width\wd\z@}$}%
    \dp\tw@-\ht\z@
    \@tempdima\ht\z@ \advance\@tempdima2\ht\tw@ \divide\@tempdima\thr@@
    \setbox\tw@\hbox{%
       \raise\@tempdima\hbox{\scalebox{1}[-1]{\lower\@tempdima\box
\tw@}}}%
    {\ooalign{\box\tw@ \cr \box\z@}}}
\newcommand{\fakephantomsection}{%
  \Hy@GlobalStepCount\Hy@linkcounter%
  \Hy@MakeCurrentHref{\@currenvir.\the\Hy@linkcounter}
  \Hy@raisedlink{\hyper@anchorstart{\@currentHref}\hyper@anchorend}%
}
\begin{document}

\title[$\mathrm{C}$-maximal regularity in locally convex spaces]{Continuous maximal regularity in locally convex spaces}
\author[K.~Kruse]{Karsten Kruse\,\orcidlink{0000-0003-1864-4915}}
\address[KK and FLS]{University of Twente, Department of Applied Mathematics, P.O.~Box 217, 7500 AE Enschede, The Netherlands}
\email{k.kruse@utwente.nl}
\author[F.L.~Schwenninger]{Felix L. Schwenninger\,\orcidlink{0000-0002-2030-6504}}
\email{f.l.schwenninger@utwente.nl}

\subjclass[2020]{Primary 34A12, 47D06 Secondary 35K90, 46A30, 46A70}

\keywords{maximal regularity, strongly continuous semigroup, admissible operator, abstract Cauchy problem, 
semivariation}

\date{\today}
\begin{abstract}
We study maximal regularity with respect to continuous functions for strongly continuous semigroups on locally convex spaces as well as its relation to the notion of admissible operators. This extends several results for classical strongly continuous semigroups on Banach spaces. In particular, we show that Travis' characterization of $\mathrm{C}$-maximal regularity using the notion of bounded semivariation carries over to the general case. Under some topological assumptions, we further show the equivalence between maximal regularity and admissibility in this context.
\end{abstract}
\maketitle

\section{Introduction}

Maximal regularity for parabolic equations is a classical and indispensable tool in the study of linear and nonlinear parabolic PDEs. To this date, it remains an active area of research. In contrast to $\mathrm{L}^{p}$-maximal regularity with $p\in(1,\infty)$, which is well-studied in theory and its application, in this work we aim to continue research on the less standard case $p =\infty$, or, maximal regularity with respect to the continuous functions. While this case is often considered as rather exotic --- not least due to Baillon's seminal result \cite{baillon1980}, see also  \cite{eberhardt1992} and \cite[Sect.~17.4]{hytonen2023} --- in the context of analytic $C_{0}$-semigroups on Banach spaces, maximal regularity estimates arising in the $\mathrm{L}^{1}$, $\mathrm{L}^{\infty}$ and $\mathrm{C}$-norm, are relevant, in particular when considered on interpolation spaces 
e.g.~\cite{daprato1979,danchin2009,guerre1993,danchin2015,lecrone2011,lorenzirhandi2021}.
Let us briefly recall the setting for abstract evolution equations in Banach spaces to set the stage for what we are aiming for --- the generalization to Hausdorff locally convex spaces. For a $C_{0}$-semigroup $(T(t))_{t\geq 0}$ on a Banach space $(X,\|\cdot\|)$, with generator $A\colon D(A)\to X$, \emph{$\mathrm{L}^{p}$-maximal regularity} refers to the property that for $r>0$ all terms 
in the equation
\begin{equation}\label{eq:maxreg}
u'(t)=Au(t)+f(t),\quad t\in[0,r],\quad u(0)=0,
\end{equation}
have the same time-regularity as all forcing terms $f\colon [0,r]\to X$, say $f\in \mathrm{L}^{p}([0,r];X)$, $p\in[1,\infty]$.
This is equivalent to requiring that for all $f\in\mathrm{L}^{p}([0,r];X)$, it holds that the function
\[
	[0,r]\ni t\mapsto A\int_{0}^{t}T(t-s)f(s)\d s \eqqcolon A(T\ast f)(t)\in X,
\]
belongs to $\mathrm{L}^{p}([0,r];X)$.
We emphasize that the latter property entails that the integral, that is, the (mild) solution to \prettyref{eq:maxreg}, lands in the domain of $A$, which is a-priori not clear. 
Further, we note that $\mathrm{L}^{\infty}$-maximal regularity is in fact equivalent to $\mathrm{C}$-maximal regularity, see \cite[Theorem 17.2.46, p.~616]{hytonen2023}, which is the case we focus on in the following. The power of this notion in the context of nonlinear PDEs lies in the norm estimate that follows automatically by the closed graph theorem. 

Travis \cite{travis1981} showed that $\mathrm{C}$-maximal regularity is already implied  by (and thus is equivalent to) the property that the mild solution $u=T\ast f$ of \prettyref{eq:maxreg} takes values in $D(A)$ for every $t\geq 0$ and $f\in \mathrm{C}([0,r];X)$. Equivalently, since $A$ is closed, this means that $(T_{-1}\ast A_{-1}f)(t)$ lies in $X$ for every $t\geq 0$. Here, the subscript refers to the corresponding unique extensions to the extrapolation space $X_{-1}$, defined as the completion of $X$ with respect to $\|A^{-1}\cdot\|$, assuming, w.l.o.g.~that $A$ is invertible. This very property is also known as $A_{-1}$ being an \emph{admissible control operator with respect to the continuous functions}, arising from infinite-dimensional systems theory \cite{weiss1989a, weiss1989b,staffans2005}, see also \cite{JacoSchwWint2022}, where this relation is discussed in detail. In some sense, that equivalence between maximal regularity and admissible operators can be seen as ``time-regularity from spatial-regularity'', since continuity (in times) follows from the fact that solutions are well-defined.  

Remarkably, the above equivalence of $A_{-1}$ being $\mathrm{C}$-admissible and $(T(t))_{\geq 0}$ having 
$\mathrm{C}$-maximal regularity drastically fails if $\mathrm{C}$ is replaced by $\mathrm{L}^{\infty}$. In \cite{JacoSchwWint2022} it was shown that if $A_{-1}$ is admissible with respect to $\mathrm{L}^{\infty}$, then $A$ extends to a bounded operator on $X$, with the converse being trivial. 

The ``classical'' setting above for strongly continuous semigroups on Banach spaces (with respect to the norm-topology) is the point of departure of this paper in which we aim to clarify \emph{what happens if one drops the assumption of strong continuity w.r.t.~a norm of the underlying Banach space} in the considerations above and replaces it with strong continuity 
w.r.t.~a weaker Hausdorff locally convex topology. This question is not that artificial as it may seem since it takes its motivation e.g.~from parabolic equations modelled on spaces of continuous functions, \cite{kraaij2016,lunardi2013,lorenzirhandi2021,goldys2001,goldys2024}, abstractly reflected in the fact that the generalization of the infinitesimal generator is not densely defined. An important application we have in mind is within the rich theory on Markov-semigroups, see e.g.\ \cite{ethier2005,liggett2010,sharpe1988}, and stochastic processes arising for instance in stochastic differential equations. For a very recent reference, where the shortcoming of the classical semigroup framework above is nicely argued, is \cite{goldys2024}. The locally convex case is non-trivial as e.g.~solution theory, and in particular existence of integrals $T\ast f$ has to be revisited. To overcome such issues, it is worthwile to recall yet another aspect of Travis' result, which equivalently links $\mathrm{C}$-maximal regularity to the property that 
$(T(t))_{\geq 0}$ is of boun\-ded semivariation. In fact, the relation to semigroups of bounded-semivariation, also shows the duality of $\mathrm{C}$-admissibility and $\mathrm{L}^{1}$-estimates of the form 
\begin{equation*}
	\left\|A'T(\cdot)' x'\right\|_{\mathrm{L}^{1}([0,r];X')}\lesssim \|x'\|_{X'},\quad x\in X',
\end{equation*}
which in fact characterizes $\mathrm{L}^{1}$-maximal regularity, see also \cite{guerre1993}, of the dual semigroup \cite{kalton2008} in the classical case. Because of this connection, our efforts for $\mathrm{C}$-maximal regularity may also pave the way for analogous results for $\mathrm{L}^{1}$-maximal regularity. Compared to continuous functions, the latter suffers from intrinsic difficulties of properly defining mild solutions through Bochner-type integrals for functions with values in general topological spaces. 

We are of course not first to study strongly continuous semigroups on Hausdorff locally convex spaces. The broad literature on the subject can be grouped into a range of different assumptions on the topological spaces and the precise definition of the semigroups. The definition of a semigroup used in this article can at least be traced back to \cite{komura1968}. 
We also refer to the introduction in \cite{kraaij2016}, where a nice overview is given on the aspects of the different existing concepts. From the point of the classical theory, the easiest example class outside the classical theory is the one of dual semigroups of those which are strongly continuous with respect to the norm-topology on a Banach space. More generally, bi-continuous semigroups, e.g.~\cite{albanese2004,budde2019,farkas2004,kuehnemund2001}, have been designed to account for the deficiency of norm-strong-continuity.

In \prettyref{sect:notions}, we collect some preliminary statements about operator-valued functions of boun\-ded semivariation on Hausdorff locally convex spaces and continue with discussing fundamentals on inhomogeneous abstract Cauchy problems and $\mathrm{C}$-maximal regularity in \prettyref{sect:ACP_Cmax_reg}. This preparation allows us to use the notion of bounded semivariation in order to characterize 
$\mathrm{C}$-maximal regularity, \prettyref{sect:sg_bounded_semivar}. More generally, we study structured versions of maximal regularity acting only on a subspace of the full space, governed by some ``control operator $B$''. This cumulates in the first main result \prettyref{thm:Cmax_reg_equiv_Cadm}, establishing a generalization of the first part of Travis' result to Hausdorff locally convex spaces. In \prettyref{sect:C_adm} the notion of admissibility is introduced and our second main result yields  that 
$\mathrm{C}$-admissibility is equivalent to $\mathrm{C}$-maximal regularity under certain topological assumptions. This completes the full characterization in the spirit of Travis, \prettyref{cor:adm_max_reg_bounded_semivar}.
To justify the topological assumptions made in \prettyref{sect:C_adm}, we discuss spaces for which our main results hold, which reduces to the question when closed-graph-type theorems hold in a more 
general setting. This part, included in \prettyref{app:C_spaces}, might be interesting in its own right.

\section{Notions and preliminaries}
\label{sect:notions}

For a Hausdorff locally convex space $X$ over the field $\K\coloneqq\R$ or $\C$ we always denote by $\Gamma_{X}$ a 
fundamental system of seminorms. For two Hausdorff locally convex spaces $X$ and $Y$ we use the symbol $\mathcal{L}(X;Y)$ to denote the space of continuous linear maps from $X$ to $Y$. 
Further, we write $\mathcal{L}(X)\coloneqq \mathcal{L}(X;X)$ and $X'\coloneqq \mathcal{L}(X;\K)$. 
Let $I$ be a Hausdorff topological space. We call a map $\alpha\colon I\to \mathcal{L}(X;Y)$ \emph{strongly continuous} 
in $t_{0}\in I$ if the map $\alpha_{x}\colon I\to Y$, $\alpha_{x}(t)\coloneqq \alpha(t)x$, is continuous in $t_{0}$ for 
every $x\in X$. 
For two Hausdorff topological spaces $\Omega$ and $X$ we denote by $\mathrm{C}(\Omega;X)$ the space of continuous functions 
from $\Omega$ to $X$. If $X$ is Hausdorff locally convex, then we denote by $\mathrm{C}_{\operatorname{b}}(\Omega;X)$ 
the space of bounded continuous functions from $\Omega$ to $X$. 
If $X=\K$, we set $\mathrm{C}_{\operatorname{b}}(\Omega)\coloneqq\mathrm{C}_{\operatorname{b}}(\Omega;\K)$. 
If $\Omega$ is compact and $X$ Hausdorff locally convex, then we equip $\mathrm{C}(\Omega;X)=\mathrm{C}_{\operatorname{b}}(\Omega;X)$ 
with the Hausdorff locally convex topology induced by the system of seminorms given by 
\[
\|f\|_{p}\coloneqq\sup_{x\in\Omega}p(f(x)),\quad f\in \mathrm{C}(\Omega;X),
\]
for $p\in\Gamma_{X}$. Let $a,b\in\R$ with $a<b$. We denote by $\mathrm{C}^{1}([a,b];X)$ the space of continuously 
differentiable functions on $[a,b]$ with values in Hausdorff locally convex spaces $X$ where differentiability in $a$ 
means left-differentiability and in $b$ right-differentiability. 
For other unexplained notions on the theory of Hausdorff locally convex spaces we refer the reader 
to \cite{jarchow1981,kaballo2014,meisevogt1997,bonet1987}.

Let us turn to some preliminary results on functions of bounded semivariation and Riemann--Stieltjes integrals. 
We recall the definition of an $\mathcal{L}(X;Y)$-valued function of bounded semivariation from \cite[p.~589]{hoenig1973}. 
Let $a,b\in\R$ with $a<b$. We call a finite real sequence $d\coloneqq (d_{i})_{0\leq i\leq n}$ for $n\in\N$ a 
\emph{partition} of $[a,b]$ if $a=d_{0}$, $b=d_{n}$ and $d_{i-1}<d_{i}$ for all $i\in\N$ with $1\leq i\leq n$, 
and set $|d|\coloneqq n$. 
We denote by $D[a,b]$ the set of all partitions of $[a,b]$. Let $X$ and $Y$ be Hausdorff locally convex spaces with 
fundamental systems of seminorms $\Gamma_{X}$ and $\Gamma_{Y}$, respectively, and $\alpha\colon [a,b]\to \mathcal{L}(X;Y)$. 
We define for $p\in\Gamma_{X}$, $q\in\Gamma_{Y}$ and $d\in D[a,b]$
\[
SV_{q,p;d}(\alpha)\coloneqq\sup\Bigl\{q\Bigl(\sum_{i=1}^{|d|}\bigl(\alpha(d_{i})-\alpha(d_{i-1})\bigr)x_{i}\Bigr)\;|\;
\forall\;1\leq i\leq |d|:\;x_{i}\in X,\,p(x_{i})\leq 1\Bigr\}.
\]
Further, we set $SV_{q,p}(\alpha)\coloneqq SV_{q,p}^{[a,b]}(\alpha)\coloneqq\sup_{d\in D[a,b]}SV_{q,p;d}(\alpha)$ and 
$SV_{q,p}^{[a,a]}(\alpha)\coloneqq 0$. We say that $\alpha$ is of 
\emph{bounded semivariation} on $[a,b]$ if for every $q\in\Gamma_{Y}$ there is $p\in\Gamma_{X}$ such that 
$SV_{q,p}(\alpha)<\infty$. We note that the definition of bounded semivariation does not depend on the choice of $\Gamma_{X}$ 
and $\Gamma_{Y}$. Further, we make the following observation, which follows directly from the definition of bounded semivariation. 

\begin{rem}\label{rem:bounded_semivariation_subinterval}
Let $a,b,c\in\R$ with $a<c<b$, $X$ and $Y$ be Hausdorff locally convex spaces and $\alpha\colon [a,b]\to \mathcal{L}(X;Y)$. 
If $\alpha$ is of bounded semivariation on $[a,b]$, then $\alpha_{\mid [a,c]}$ and $\alpha_{\mid [c,b]}$ are of bounded semivariation on $[a,c]$ and $[c,b]$, respectively, and 
$SV_{q,p}^{[a,b]}(\alpha)=SV_{q,p}^{[a,c]}(\alpha)+SV_{q,p}^{[c,b]}(\alpha)$ for all $q\in\Gamma_{Y},p\in\Gamma_{X}$.
\end{rem}

With the definitions above we say that a function $f\colon[a,b]\to X$ is \emph{Riemann--Stieltjes integrable} w.r.t.~$\alpha\in\mathcal{L}(X;Y)$ 
if there is $y\in Y$ such that for all $\varepsilon>0$ and $q\in\Gamma_{Y}$ there is $\delta>0$ such that for all $d\in D[a,b]$ 
with $\Delta d\coloneqq\max\{d_{i}-d_{i-1}\;|\;1\leq i\leq |d|\}<\delta$ it holds for all finite real sequences 
$(c_{i})_{1\leq i\leq |d|}$ with $c_{i}\in [d_{i-1},d_{i}]$ for all $1\leq i\leq |d|$ that 
\[
q\Bigl(\sum_{i=1}^{|d|}\bigl(\alpha(d_{i})-\alpha(d_{i-1})\bigr)f(c_{i})-y\Bigr)<\varepsilon .
\]
In this case, we note that $y$ is unique since $Y$ is Hausdorff, and define the \emph{Riemann--Stieltjes integral} of $f$ 
w.r.t.~$\alpha$ by $\int_{a}^{b}f(s)\d\alpha(s)\coloneqq y$. Further, we set $\int_{a}^{a}f(s)\d\alpha(s)\coloneqq 0$. 
Again, we note that the definition of Riemann--Stieltjes 
integrability does not depend on the choice of $\Gamma_{X}$ and $\Gamma_{Y}$. 
For the sake of completeness we give a proof of the following result, 
which is stated in \cite{hoenig1973} without a proof (cf.~\cite[Proposition 2.1 (H\"onig), p.~426]{travis1981} 
for Banach spaces $X$ and $Y$). 

\begin{prop}[{\cite[Proposition 1', p.~589]{hoenig1973}}]\label{prop:RS_int_bounde_semivar}
Let $a,b\in\R$ with $a<b$, $X$ and $Y$ be Hausdorff locally convex spaces and $Y$ sequentially complete. 
If $f\in\mathrm{C}([a,b];X)$ and 
$\alpha\colon [a,b]\to \mathcal{L}(X;Y)$ is of bounded semivariation, then $f$ is Riemann--Stieltjes integrable w.r.t.~$\alpha$ 
and the linear map 
\[
I_{\alpha}\colon \mathrm{C}([a,b];X)\to Y,\;I_{\alpha}(g)\coloneqq\int_{a}^{b}g(s)\d\alpha(s),
\]
is continuous. 
\end{prop}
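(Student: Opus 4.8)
The plan is to obtain the Riemann--Stieltjes integral as the limit of Riemann--Stieltjes sums along one fixed sequence of partitions whose mesh tends to zero, exploiting the sequential completeness of $Y$, and then to read off the continuity of $I_{\alpha}$ from the same quantitative estimate that yields integrability.

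First I would record uniform continuity of $f$: since $[a,b]$ is compact and $f$ is continuous, for every $p\in\Gamma_{X}$ and $\varepsilon>0$ there is $\delta>0$ with $p(f(s)-f(t))\le\varepsilon$ whenever $|s-t|<\delta$ --- this follows by applying the tube lemma to the open set $\{(s,t):p(f(s)-f(t))<\varepsilon\}\subseteq[a,b]^{2}$, which contains the diagonal. Next, for a tagged partition $(d,\xi)$ (meaning $d\in D[a,b]$ with $\xi_{i}\in[d_{i-1},d_{i}]$) write $\sigma(d,\xi)\coloneqq\sum_{i=1}^{|d|}(\alpha(d_{i})-\alpha(d_{i-1}))f(\xi_{i})$. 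Fixing $q\in\Gamma_{Y}$ and choosing, by bounded semivariation, some $p\in\Gamma_{X}$ with $SV_{q,p}(\alpha)<\infty$, the key estimate is: if $d'$ refines the point set $d$, $\Delta d<\delta$, and $\xi,\eta$ are arbitrary tags on $d,d'$, then
\[
q\bigl(\sigma(d,\xi)-\sigma(d',\eta)\bigr)\le\varepsilon\,SV_{q,p}(\alpha).
\]
This comes from telescoping each increment $\alpha(d_{i})-\alpha(d_{i-1})$ over the $d'$-subintervals contained in $[d_{i-1},d_{i}]$, so that $\sigma(d,\xi)-\sigma(d',\eta)$ becomes $\sum_{j}(\alpha(d'_{j})-\alpha(d'_{j-1}))\bigl(f(\xi_{i(j)})-f(\eta_{j})\bigr)$ with each pair of points at distance $<\delta$; rescaling the vectors $f(\xi_{i(j)})-f(\eta_{j})$ by $\varepsilon^{-1}$ brings them into the unit ball of $p$, and the definition of $SV_{q,p;d'}(\alpha)\le SV_{q,p}(\alpha)$ applies. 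Applying this to two tagged partitions of mesh $<\delta$ together with their common refinement then gives $q(\sigma(d^{1},\xi^{1})-\sigma(d^{2},\xi^{2}))\le 2\varepsilon\,SV_{q,p}(\alpha)$.

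With this in hand I would take the equidistant partitions $d^{(n)}$ of $[a,b]$ with arbitrary tags; the estimate makes $(\sigma(d^{(n)},\xi^{(n)}))_{n}$ a Cauchy sequence in $Y$, hence convergent to some $y\in Y$ by sequential completeness, and the same estimate shows $y$ is independent of the chosen sequence and that $q(\sigma(d,\xi)-y)\le 2\varepsilon\,SV_{q,p}(\alpha)$ for every tagged partition with $\Delta d<\delta$ --- which is precisely the defining condition for $\int_{a}^{b}f\,\d\alpha=y$. For continuity I would argue that for $g\in\mathrm{C}([a,b];X)$ and any tagged partition, rescaling $g(\xi_{i})$ by $\|g\|_{p}$ gives $q(\sigma(d,\xi))\le\|g\|_{p}\,SV_{q,p}(\alpha)$, and passing to the limit along a defining sequence yields $q(I_{\alpha}(g))\le SV_{q,p}(\alpha)\,\|g\|_{p}$; since a $p$ depending only on $q$ with $SV_{q,p}(\alpha)<\infty$ exists by hypothesis, this is the required continuity estimate, and linearity of $I_{\alpha}$ is clear.

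The main obstacle I anticipate is organizational rather than deep: because $Y$ is merely sequentially complete, one cannot work with the net of all tagged partitions directed by mesh, so the telescoping/refinement estimate has to be packaged so that one explicit sequence of partitions produces a limit which simultaneously satisfies the $\varepsilon$--$\delta$ definition of the Riemann--Stieltjes integral. A secondary nuisance is the bookkeeping forced by seminorms that need not be norms --- the rescaling arguments both in the Cauchy estimate and in the continuity bound must be phrased via ``$\le\lambda\,SV_{q,p}(\alpha)$ for all $\lambda>0$'' in the degenerate case where the relevant seminorm vanishes.
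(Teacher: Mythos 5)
Your proposal is correct and follows essentially the same route as the paper's proof: a Cauchy-type estimate for Riemann--Stieltjes sums derived from uniform continuity of $f$ together with a seminorm $p$ satisfying $SV_{q,p}(\alpha)<\infty$, sequential completeness of $Y$ to produce the limit, and the bound $q(I_{\alpha}(g))\leq SV_{q,p}(\alpha)\sup_{s\in[a,b]}p(g(s))$ for continuity. The only difference is that where the paper reduces existence of the integral to a same-partition, two-tags Cauchy condition by citing \cite[Remark 4.1.5, p.~72]{lorenzi2001}, you carry out that reduction yourself via the common-refinement telescoping estimate and an explicit sequence of equidistant partitions, and your rescaling of the differences by $\varepsilon^{-1}$ neatly sidesteps the degenerate case $p(f(c_{i,1})-f(c_{i,2}))=0$ that the paper handles separately.
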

\begin{proof}
First, we prove that $f$ is Riemann--Stieltjes integrable w.r.t.~$\alpha$. 
Using the sequential completeness of $Y$, we obtain analogously to the reasoning given in \cite[Remark 4.1.5, p.~72]{lorenzi2001} 
that it is enough to show that the following Cauchy type condition holds, namely that for all $\varepsilon>0$ and 
$q\in\Gamma_{Y}$ there is $\delta>0$ such that for all $d\in D[a,b]$ 
with $\Delta d<\delta$ it holds for all finite real sequences $(c_{i,j})_{1\leq i\leq |d|}$ with $c_{i,j}\in [d_{i-1},d_{i}]$ 
for all $j=1,2$ and $1\leq i\leq |d|$ that 
\[
q\Bigl(\sum_{i=1}^{|d|}\bigl(\alpha(d_{i})-\alpha(d_{i-1})\bigr)\bigl(f(c_{i,1})-f(c_{i,2})\bigr)\Bigr)<\varepsilon .
\]
Let $\varepsilon>0$ and $q\in\Gamma_{Y}$. Since $\alpha$ is of bounded semivariation, there is $p\in\Gamma_{X}$ such that 
$SV_{q,p}(\alpha)<\infty$. Due to the compactness of $[a,b]$ the function $f$ is uniformly continuous and thus there is 
$\delta>0$ such that for all $t_{1},t_{2}\in[a,b]$ with $|t_{1}-t_{2}|<\delta$ it holds that $p(f(t_{1})-f(t_{2}))<\varepsilon$. 
Let $d\in D[a,b]$ with $\Delta d<\delta$ and $(c_{i,j})_{1\leq i\leq |d|}$ be two finite real sequences with 
$c_{i,j}\in [d_{i-1},d_{i}]$ for all $j=1,2$ and $1\leq i\leq |d|$. 
If there is some $1\leq i_{0}\leq |d|$ such that $p(f(c_{i_{0},1})-f(c_{i_{0},2}))=0$, then we set 
$x_{i_{0}}\coloneqq f(c_{i_{0},1})-f(c_{i_{0},2})$ and we have
$p(tx_{i_{0}})=tp(x_{i_{0}})=0$ for all $t>0$ and thus 
\[
tq((\alpha(d_{i})-\alpha(d_{i-1}))x_{i_{0}})=q((\alpha(d_{i})-\alpha(d_{i-1}))tx_{i_{0}})\leq SV_{q,p}(\alpha)<\infty,
\]
implying $q((\alpha(d_{i})-\alpha(d_{i-1}))x_{i_{0}})=0$. This implies 
\[
q\Bigl(\sum_{i=1}^{|d|}\bigl(\alpha(d_{i})-\alpha(d_{i-1})\bigr)\bigl(f(c_{i,1})-f(c_{i,2})\bigr)\Bigr)
\leq q\Bigl(\sum_{\substack{i=1\\i\neq i_{0}}}^{|d|}\bigl(\alpha(d_{i})-\alpha(d_{i-1})\bigr)\bigl(f(c_{i,1})-f(c_{i,2})\bigr)\Bigr)
\]
by the triangle inequality. Hence we may assume w.l.o.g.~that $p(f(c_{i,1})-f(c_{i,2}))\neq 0$ 
for all $1\leq i\leq |d|$. Then we have 
\begin{flalign*}
&\hspace{0.37cm}q\Bigl(\sum_{i=1}^{|d|}\bigl(\alpha(d_{i})-\alpha(d_{i-1})\bigr)\bigl(f(c_{i,1})-f(c_{i,2})\bigr)\Bigr)\\
&\leq q\Bigl(\sum_{i=1}^{|d|}\bigl(\alpha(d_{i})-\alpha(d_{i-1})\bigr)
       \Bigl(\frac{f(c_{i,1})-f(c_{i,2})}{p(f(c_{i,1})-f(c_{i,2}))}\Bigr)\Bigr)p(f(c_{i,1})-f(c_{i,2}))\\
&\leq SV_{q,p}(\alpha)\varepsilon ,
\end{flalign*}
which proves that $f$ is Riemann--Stieltjes integrable w.r.t.~$\alpha$. 

Second, from the definition of the Riemann--Stieltjes integral it follows that 
\begin{equation}\label{eq:RS_int_continuous}
q\Bigl(\int_{a}^{b}f(s)\d\alpha(s)\Bigr)\leq SV_{q,p}(\alpha)\sup_{s\in[a,b]}p(f(s)),
\end{equation}
yielding the continuity of $I_{\alpha}$.
\end{proof}

Moreover, we note the following observations, which follow from 
\prettyref{rem:bounded_semivariation_subinterval}, \prettyref{prop:RS_int_bounde_semivar} and the definitions of 
bounded semivariation and the Riemann--Stieltjes integral. 

\begin{rem}\label{rem:RS_int_subinterval_split}
Let $a,b,c\in\R$ with $a<c<b$, and $X$, $Y$ and $Z$ be Hausdorff locally convex spaces such that 
$Y$ is sequentially complete and $\alpha\colon [a,b]\to \mathcal{L}(X;Y)$ is of bounded semivariation. 
Then the following assertions hold.
\begin{enumerate}
\item[(a)] If $f\in\mathrm{C}([a,b];X)$, then $f$ is Riemann--Stieltjes integrable on $[a,c]$ and $[c,b]$ 
w.r.t.~$\alpha$ and 
\[
\int_{a}^{b}f(s)\d\alpha(s)=\int_{a}^{c}f(s)\d\alpha(s)+\int_{c}^{b}f(s)\d\alpha(s) .
\]
\item[(b)] Let $B\in\mathcal{L}(Y;Z)$ and $Z$ be sequentially complete. 
Then $B\alpha$ is of bounded semivariation and 
\[
B \int_{a}^{b}f(s)\d\alpha(s)= \int_{a}^{b}f(s)\d B\alpha(s) ,\quad f\in\mathrm{C}([a,b];X).
\]
\item[(c)] Let $B\in\mathcal{L}(Z;X)$. Then $\alpha(\cdot)B$ is of bounded semivariation and 
\[
 \int_{a}^{b}Bf(s)\d\alpha(s)= \int_{a}^{b}f(s)\d \alpha(s)B , \quad f\in\mathrm{C}([a,b];Z).
\]
\end{enumerate}
\end{rem}

\begin{cor}\label{cor:RS_int_to_zero}
Let $a,b,c\in\R$ with $a<c\leq b$, $X$ and $Y$ be Hausdorff locally convex spaces such that $Y$ is sequentially complete, 
$\alpha\colon [a,b]\to \mathcal{L}(X;Y)$ of bounded semivariation such that $\alpha$ is strongly continuous in $s=a$, 
$f\in\mathrm{C}([a,b];X)$ and $(\varphi_{t})_{t\in (a,c]}$ such that $\varphi_{t}\in\mathrm{C}([a,t];[a,b])$ for all 
$t\in (a,c]$. Then it holds 
\[
\lim_{t\to a\rlim}\int_{a}^{t}f(\varphi_{t}(s))\d\alpha(s)=0.
\]
\end{cor}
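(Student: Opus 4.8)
I would estimate the seminorm $q\bigl(\int_a^t f(\varphi_t(s))\,\d\alpha(s)\bigr)$ directly using the basic bound \prettyref{eq:RS_int_continuous} from \prettyref{prop:RS_int_bounde_semivar}. Fix $q\in\Gamma_Y$; since $\alpha$ is of bounded semivariation there is $p\in\Gamma_X$ with $SV_{q,p}^{[a,b]}(\alpha)<\infty$. Applying \prettyref{eq:RS_int_continuous} on the interval $[a,t]$ (which is legitimate by \prettyref{rem:bounded_semivariation_subinterval} and \prettyref{rem:RS_int_subinterval_split}(a)) gives
\[
q\Bigl(\int_{a}^{t}f(\varphi_{t}(s))\,\d\alpha(s)\Bigr)\leq SV_{q,p}^{[a,t]}(\alpha)\,\sup_{s\in[a,t]}p\bigl(f(\varphi_{t}(s))\bigr)\leq SV_{q,p}^{[a,t]}(\alpha)\,\sup_{\sigma\in[a,b]}p\bigl(f(\sigma)\bigr).
\]
The second factor is a finite constant independent of $t$ (by compactness of $[a,b]$ and continuity of $f$), so it suffices to show $SV_{q,p}^{[a,t]}(\alpha)\to 0$ as $t\to a^{+}$.

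**The key step.** By \prettyref{rem:bounded_semivariation_subinterval}, $t\mapsto SV_{q,p}^{[a,t]}(\alpha)$ is nondecreasing and bounded above by $SV_{q,p}^{[a,b]}(\alpha)<\infty$, hence has a limit $\ell\geq 0$ as $t\to a^{+}$; I must show $\ell=0$. Suppose $\ell>0$. Then for every $t\in(a,c]$ one can find a partition $d^{(t)}\in D[a,t]$ and unit vectors $x_i^{(t)}$ (i.e. $p(x_i^{(t)})\leq 1$) with $q\bigl(\sum_{i=1}^{|d^{(t)}|}(\alpha(d_i^{(t)})-\alpha(d_{i-1}^{(t)}))x_i^{(t)}\bigr)>\ell/2$. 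Now I exploit strong continuity of $\alpha$ at $s=a$: the first node $d_0^{(t)}=a$, so the $i=1$ term is $(\alpha(d_1^{(t)})-\alpha(a))x_1^{(t)}$, and I want to peel this off. The cleanest route is to observe that the partition $d^{(t)}$ restricted to $[d_1^{(t)},t]$ together with the bound on the remaining terms gives $SV_{q,p;\,d^{(t)}|_{[d_1^{(t)},t]}}(\alpha)\geq \ell/2 - q\bigl((\alpha(d_1^{(t)})-\alpha(a))x_1^{(t)}\bigr)$; but $SV_{q,p}^{[d_1^{(t)},t]}(\alpha)\leq SV_{q,p}^{[a,t]}(\alpha)-SV_{q,p}^{[a,d_1^{(t)}]}(\alpha)$, which can be made small only if we already knew additivity helped — so instead I argue more carefully.

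**A cleaner argument.** Fix $\eta>0$. Choose $t_0\in(a,c]$ with $SV_{q,p}^{[a,t_0]}(\alpha)<\ell+\eta$; this is possible since the limit is $\ell$ and the function is nondecreasing, hence eventually $\geq\ell$ and $<\ell+\eta$ — wait, I need the limit to be $\ell$ so values approach $\ell$ from above or below; being nondecreasing with limit $\ell$ forces $SV_{q,p}^{[a,t]}(\alpha)\leq\ell$ for all $t$ but also $\to\ell$, so in fact $SV_{q,p}^{[a,t_0]}(\alpha)\leq\ell$ always and it suffices to rule out $\ell>0$ by a single good partition. Take any $t\in(a,c]$ and a partition $d\in D[a,t]$ and unit vectors with sum-seminorm $>\ell/2$. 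Using strong continuity of $\alpha$ at $a$, pick $\delta>0$ so small that $q((\alpha(s)-\alpha(a))x)<\ell/4$ is not available (no uniformity in $x$!). So the honest fix: strong continuity at $a$ gives, for the single vector $x_1$, that $q((\alpha(d_1)-\alpha(a))x_1)$ is small when $d_1$ is close to $a$ — but $x_1$ depends on $d_1$. The resolution is that the tail $\sum_{i\geq 2}(\alpha(d_i)-\alpha(d_{i-1}))x_i$ is a semivariation sum over the partition of $[d_1,t]$ obtained by replacing the first node by $d_1$, hence bounded by $SV_{q,p}^{[d_1,t]}(\alpha)\leq SV_{q,p}^{[a,t]}(\alpha)=SV_{q,p}^{[a,d_1]}(\alpha)+SV_{q,p}^{[d_1,t]}(\alpha)$ reshuffled — and since $SV^{[a,t]}\leq\ell$, the tail is at most $\ell-SV_{q,p}^{[a,d_1]}(\alpha)$... this is circular.

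**The main obstacle**, as the above vacillation shows, is handling the non-uniformity: strong continuity of $\alpha$ at $a$ does not give an estimate on $\alpha(s)-\alpha(a)$ uniform over the unit ball of $p$, which is exactly what a semivariation bound needs. I expect the actual proof avoids semivariation-of-the-tail entirely and instead uses \prettyref{cor:RS_int_to_zero}'s hypotheses more cleverly — perhaps by first reducing via \prettyref{rem:RS_int_subinterval_split} and a change-of-variables-type identity, or by approximating $f$ by $f(a)$ plus a small remainder (using uniform continuity of $f$), writing $\int_a^t f(\varphi_t(s))\,\d\alpha(s)=\int_a^t f(a)\,\d\alpha(s)+\int_a^t (f(\varphi_t(s))-f(a))\,\d\alpha(s)$. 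The first integral equals $(\alpha(t)-\alpha(a))f(a)\to 0$ by strong continuity of $\alpha$ at $a$ (now with a fixed vector $f(a)$ — no uniformity issue!); the second is bounded by $SV_{q,p}^{[a,t]}(\alpha)\cdot\sup_{s\in[a,t]}p(f(\varphi_t(s))-f(a))$, and the supremum tends to $0$ as $t\to a^{+}$ because $\varphi_t(s)\in[a,b]$ with... no, $\varphi_t$ need not be near $a$. Hence one more split is needed, or one uses that $\varphi_t\in\mathrm{C}([a,t];[a,b])$ only guarantees the range is in $[a,b]$; without further structure the second term need not be small, so the genuine content must be that $SV_{q,p}^{[a,t]}(\alpha)\to 0$ after all, and the honest proof of that is the crux. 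I would establish it by contradiction: if $SV^{[a,t_n]}\geq\varepsilon_0$ along $t_n\downarrow a$, extract partitions; the first subinterval $[a,d_1^{(n)}]$ contributes, by the single-vector strong continuity applied after normalizing, a term that — combined with $SV^{[a,d_1^{(n)}]}\le SV^{[a,t_n]}$ and additivity — forces $SV^{[d_1^{(n)},t_n]}$ to carry almost all the mass; iterating/using a telescoping and the finiteness of $SV^{[a,b]}$ yields infinitely many disjoint subintervals each of semivariation $\geq\varepsilon_0/2$, contradicting $SV^{[a,b]}<\infty$ via additivity over a common refinement. That telescoping-to-contradiction is the step I expect to be the technical heart.
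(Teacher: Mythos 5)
Your reduction --- bounding $q\bigl(\int_a^t f(\varphi_t(s))\,\d\alpha(s)\bigr)$ by $SV_{q,p}^{[a,t]}(\alpha)\sup_{\sigma\in[a,b]}p(f(\sigma))$ via \eqref{eq:RS_int_continuous} and \prettyref{rem:bounded_semivariation_subinterval} --- is exactly the paper's first step, and the paper likewise rests the entire proof on the single claim $\lim_{t\to a\rlim}SV_{q,p}^{[a,t]}(\alpha)=0$, which it asserts ``similar to'' the scalar bounded-variation argument in Bogachev. So you have located the crux correctly. The genuine gap is that your proposal never proves this claim, and your closing telescoping/contradiction sketch cannot be completed: it needs $q\bigl((\alpha(d_1)-\alpha(a))x_1\bigr)$ to be small for the partition-dependent vector $x_1$ with $p(x_1)\le 1$, i.e.\ smallness of $\alpha(d_1)-\alpha(a)$ \emph{uniformly over the $p$-unit ball}, and strong continuity at $a$ provides no such uniformity --- precisely the obstacle you yourself flagged two paragraphs earlier and then passed over with ``single-vector strong continuity applied after normalizing''.

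In fact the intermediate claim you are trying to establish is false under the stated hypotheses, so no rearrangement of the telescoping argument can rescue it: take $X=Y=c_0$ and $\alpha(t)=T_0(t)$ with $T_0(t)x=(\e^{-nt}x_n)_{n\in\N}$, which is strongly continuous and of bounded semivariation on $[0,r]$ (\prettyref{ex:sg_bounded_semivar} (b)); already the two-point partition $\{0,t\}$ gives $SV_{\|\cdot\|_\infty,\|\cdot\|_\infty}^{[0,t]}(T_0)\ge\sup_{\|x\|_\infty\le1}\|(T_0(t)-\id)x\|_\infty=\sup_{n\in\N}(1-\e^{-nt})=1$ for every $t>0$, so $SV^{[0,t]}$ does not tend to $SV^{[0,0]}=0$. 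The Bogachev-type continuity of the (semi)variation function transfers only when $\alpha$ is continuous at $a$ in a uniform sense (e.g.\ on $p$-bounded sets), not merely strongly; whether the corollary with completely arbitrary $\varphi_t$ can be obtained from strong continuity alone is therefore a genuinely delicate point that an estimate against $SV^{[a,t]}$ of the full unit ball cannot settle. A route that does work in the situations where the corollary is invoked later (\prettyref{prop:continuity_rs_int}, where $\varphi_t$ maps $[a,t]$ into an interval of length $t-a$) is the recentering you briefly tried, $\int_a^t f(\varphi_t(s))\,\d\alpha(s)=(\alpha(t)-\alpha(a))y+\int_a^t\bigl(f(\varphi_t(s))-y\bigr)\,\d\alpha(s)$ with a fixed vector $y$: the first term is handled by strong continuity at $a$, the second by $SV_{q,p}^{[a,b]}(\alpha)$ times a small supremum; but, as you observed, this does not cover $\varphi_t$ whose range fills all of $[a,b]$. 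In summary: same strategy as the paper, correct reduction, but the key analytic step is missing, and the way you propose to prove it would fail.
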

\begin{proof}
Let $q\in\Gamma_{Y}$. Since $\alpha$ is of bounded semivariation on $[a,b]$, there is $p\in\Gamma_{X}$ such that 
$SV_{q,p}^{[a,t]}(\alpha)\leq SV_{q,p}^{[a,b]}(\alpha)<\infty$ for all $a< t\leq b$. 
Using \prettyref{rem:bounded_semivariation_subinterval} and the strong continiuity of $\alpha$ in $a$, it follows similarly to 
\cite[5.2.2 Proposition (ii), p.~333]{bogachev2007} that $\lim_{t\to a\rlim}SV_{q,p}^{[a,t]}(\alpha)=SV_{q,p}^{[a,a]}(\alpha)=0$. Due to \eqref{eq:RS_int_continuous} this implies for $a< t\leq c$ that
\[
     q\Bigl(\int_{a}^{t}f(\varphi_{t}(s))\d\alpha(s)\Bigr)
\leq SV_{q,p}^{[a,t]}(\alpha)\sup_{s\in[a,t]}p(f(\varphi_{t}(s)))
\leq SV_{q,p}^{[a,t]}(\alpha)\sup_{s\in[a,b]}p(f(s))
\]
which leads to 
\[
 q\Bigl(\int_{a}^{t}f(\varphi_{t}(s))\d\alpha(s)\Bigr) \underset{t\to a\rlim}{\to} 0
\]
and proves our claim.
\end{proof}

In the case that $X$ and $Y$ are Banach spaces, $a=0$, $c=b$ and $\varphi_{t}(s)=s$ for all $t\in(0,b]$ and $s\in[0,t]$, \prettyref{cor:RS_int_to_zero} is given in \cite[Lemma 2.1, p.~426]{travis1981} without a proof. 

\section{The abstract Cauchy problem and \texorpdfstring{$\mathrm{C}$}{C}-maximal regularity}
\label{sect:ACP_Cmax_reg}

Let $r>0$, $X$ be a Hausdorff locally convex space, $A\colon D(A)\subset X\to X$ a linear map, 
$f\in\mathrm{C}([0,r];X)$ and $x\in X$. We consider the \emph{abstract Cauchy problem (ACP)} 
\begin{equation}\label{eq:ACP}
u'(t)=Au(t)+f(t),\quad t\in[0,r],\quad u(0)=x.
\end{equation}

Our goal of this section is to derive necessary and sufficient conditions for the solvability of the ACP \eqref{eq:ACP}. 
For this purpose we need to recall the concept of strongly continuous locally (or quasi-) equicontinuous semigroups and explain 
what we consider as a solution of the ACP \eqref{eq:ACP}.

\begin{defn}[{\cite[p.~294]{choe1985}, \cite[Definition 1.1, p.~259]{komura1968}}]\label{defn:semigroup}
Let $X$ be a Hausdorff locally convex space. A family $(T(t))_{t\geq 0}$ in $\mathcal{L}(X)$ is called
\begin{enumerate}
\item[(i)] a \emph{semigroup} on $X$ if $T(t+s)=T(t)T(s)$ and $T(0)=\id$ for all $t,s\geq 0$,
\item[(ii)] \emph{strongly continuous} if the map $[0,\infty)\to\mathcal{L}(X)$, $t\mapsto T(t)$, is strongly 
continuous in every $t_{0}\in [0,\infty)$, 
\item[(iii)] \emph{locally equicontinuous} if for a fundamental system of seminorms $\Gamma_{X}$ it holds
\[
\forall\;q\in\Gamma_{X},\,t_{0}\geq 0\;\exists\;p\in\Gamma_{X},\,C\geq 0\;
\forall\;t\in [0,t_{0}],\,x\in X:\;q(T(t)x)\leq Cp(x),
\] 
\item[(iv)] \emph{quasi-equicontinuous} if for a fundamental system of seminorms $\Gamma_{X}$ it holds
\[
\exists\;\omega\in\R\;\forall\;q\in\Gamma_{X}\;\exists\;p\in\Gamma_{X},\,C\geq 0\;
\forall\;t\geq 0,\,x\in X:\;q(\mathrm{e}^{-\omega t}T(t)x)\leq Cp(x). 
\] 
\end{enumerate}
\end{defn}

We note that the definitions of local equicontinuity and quasi-equicontinuity do not depend on the choice of 
$\Gamma_{X}$. Quasi-equicontinuity is also called \emph{exponential equicontinuity} 
(see \cite[Definition 2.1, p.~255--256]{albanese2013}). 
Clearly, quasi-equicontinuity implies local equicontinuity. Moreover, some results 
on automatic local equicontinuity are known. For instance, every strongly continuous semigroup on a barrelled or strong 
Mackey space $X$ is locally equicontinuous by \cite[Proposition 1.1, p.~259]{komura1968} and 
\cite[Lemma 3.2, p.~160]{kraaij2016}. 
Here, $X$ is called a \emph{strong Mackey space} if every $\sigma(X',X)$-compact set is equicontinuous in $X'$ 
(see \cite[p.~317]{sentilles1972}) where $\sigma(X',X)$ denotes the weak topology on $X'$. 
In particular, a strong Mackey space $X$ is a \emph{Mackey space}, i.e.~carries the Mackey topology $\mu(X,X')$, 
by \cite[p.~317]{sentilles1972} and the Mackey--Arens theorem. On Fr\'echet spaces every strongly continuous 
semigroup is already locally equicontinuous since Fr\'echet spaces are barrelled. 
On Banach spaces every strongly continuous semigroup is even quasi-equicontinuous 
by \cite[Chap.~I, 5.5 Proposition, p.~39]{engel_nagel2000}. 
However, the situation is different in general Fr\'echet spaces since there are strongly continuous semigroups 
on Fr\'echet spaces which are not quasi-equicontinuous by \cite[Remark 2.2 (iii), p.~256]{albanese2013}. 
On the other hand, leaving the realm of non-normable Fr\'echet spaces, every $\tau$-bi-continuous semigroup on a 
Saks space $(X,\|\cdot\|,\tau)$ is strongly continuous and locally, even quasi-, equicontinuous 
w.r.t.~the mixed topology $\gamma\coloneqq\gamma(\|\cdot\|,\tau)$ by \cite[Theorem 7.4, p.~180]{kraaij2016} 
(cf.~\cite[Theorem 3.17 (a), p.~13]{kruse_schwenninger2022}) if $(X,\gamma)$ is sequentially complete and a 
\emph{C-sequential space}, i.e.~every convex sequentially open subset of $(X,\gamma)$ is already open 
(see \cite[p.~273]{snipes1973}). We refer the reader to \prettyref{app:C_spaces} for the definition of a 
Saks space and the mixed topology.

We recall from \cite[p.~260]{komura1968} that the \emph{generator} $A\colon D(A)\to X$ of a strongly continuous semigroup 
$(T(t))_{t\geq 0}$ on a Hausdorff locally convex space $X$ is defined by 
\[
D(A)\coloneqq \Bigl\{x\in X\;|\;\lim_{t\to 0\rlim}\frac{T(t)x-x}{t}\;\text{exists in }X\Bigr\}
\] 
and 
\[
Ax\coloneqq \lim_{t\to 0\rlim}\frac{T(t)x-x}{t},\quad x\in D(A).
\]
If $X$ is sequentially complete, then $D(A)$ is dense in $X$ by \cite[Proposition 1.3, p.~261]{komura1968}. 

\begin{defn}\label{defn:strict_solution}
Let $r>0$, $X$ be a Hausdorff locally convex space, $A\colon D(A)\subset X\to X$ a linear map, 
$f\in\mathrm{C}([0,r];X)$ and $x\in X$. 
We call $u\in\mathrm{C}^{1}([0,r];X)$ a \emph{strict solution} of the ACP \eqref{eq:ACP} if $u(t)\in D(A)$ for all 
$t\in [0,r]$ and $u$ fulfils \eqref{eq:ACP}.
\end{defn}

If the ACP \eqref{eq:ACP} has a strict solution $u$, then $x=u(0)\in D(A)$ and $Au=u'-f\in\mathrm{C}([0,r];X)$. 
In the case that $X$ is a Banach space and $A$ the generator of a strongly continuous semigroup 
the definition of a strict solution is given in 
\cite[Definition 2.4.1, p.~50]{lorenzirhandi2021}. Strict solutions are also called strong solutions 
(see \cite[p.~425]{travis1981} or \cite[2.1 Definition, p.~35]{agase1987}) or classical solutions 
(see \cite[6.1 Definition, p.~145]{engel_nagel2000}) even though 
one should not confuse them with classical solutions in the sense of \cite[Definition 3.4.1, p.~70]{lorenzirhandi2021} 
(see also \prettyref{defn:classical_solution}). If $X$ is a Banach space, $A$ the generator of 
a strongly continuous semigroup and $u$ a strict solution of the ACP \eqref{eq:ACP}, then $u$ is unique and there 
is an explicit representation of $u$ by a variation of constants formula. 
To extend this result to strongly continuous locally equicontinuous semigroups on 
sequentially complete spaces, we need the concept of the convolution of the semigroup and the inhomogeneity $f$.

\begin{prop}\label{prop:riemann_int}
Let $r>0$, $X$ be a sequentially complete Hausdorff locally convex space, $(T(t))_{t\geq 0}$ a strongly continuous 
locally equicontinuous semigroup on $X$ and $f\in\mathrm{C}([0,r];X)$. 
Then the map $[0,t]\ni s\mapsto T(t-s)f(s)\in X$ is continuous and Riemann integrable for every $0\leq t\leq r$ 
and the \emph{convolution}
\[
T\ast f\colon [0,r] \to X,\;(T\ast f)(t)\coloneqq \int_{0}^{t}T(t-s)f(s)\d s,
\]
is continuous. Moreover, for every $q\in\Gamma_{X}$ there are $p\in\Gamma_{X}$ and $C\geq 0$ such that 
for all $t\in [0,r]$
\[
q((T\ast f)(t))\leq Ct \sup_{s\in [0,t]}p(f(s)).
\]
\end{prop}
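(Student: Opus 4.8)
The plan is to prove the three assertions separately --- continuity of the integrand, its Riemann integrability together with the quantitative estimate, and continuity of $T\ast f$ --- each time reducing to the tools of \prettyref{sect:notions} applied to the simple integrator $\alpha(s)\coloneqq s\,\id_{X}$, for which the Riemann--Stieltjes integral $\int_{0}^{t}g(s)\,\d\alpha(s)$ is precisely the Riemann integral $\int_{0}^{t}g(s)\,\d s$.

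\emph{Continuity of the integrand.} Fix $0\le t\le r$ and, for $s,s_{0}\in[0,t]$, write $T(t-s)f(s)-T(t-s_{0})f(s_{0})=T(t-s)\bigl(f(s)-f(s_{0})\bigr)+\bigl(T(t-s)-T(t-s_{0})\bigr)f(s_{0})$. Given $q\in\Gamma_{X}$, local equicontinuity (\prettyref{defn:semigroup}(iii), with $t_{0}=r$) supplies $p\in\Gamma_{X}$ and $C\ge 0$ with $q(T(\sigma)y)\le Cp(y)$ for all $\sigma\in[0,r]$ and $y\in X$; hence the first summand is bounded by $Cp(f(s)-f(s_{0}))\to 0$ as $s\to s_{0}$ by continuity of $f$, while the second tends to $0$ since $\sigma\mapsto T(\sigma)f(s_{0})$ is continuous by strong continuity. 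Thus $s\mapsto T(t-s)f(s)$ is continuous on $[0,t]$.

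\emph{Riemann integrability and the estimate.} The map $\alpha(s)\coloneqq s\,\id_{X}$ is of bounded semivariation on each $[0,t]$ with $SV_{q,q}^{[0,t]}(\alpha)\le t$ for every $q\in\Gamma_{X}$, because $q\bigl(\sum_{i}(d_{i}-d_{i-1})x_{i}\bigr)\le\sum_{i}(d_{i}-d_{i-1})=t$ whenever $q(x_{i})\le 1$. As $X$ is sequentially complete and the integrand is continuous by the first step, \prettyref{prop:RS_int_bounde_semivar} gives its Riemann--Stieltjes integrability with respect to $\alpha$, i.e.~Riemann integrability, and \eqref{eq:RS_int_continuous} specialises to $q\bigl(\int_{0}^{t}g(s)\,\d s\bigr)\le t\sup_{s\in[0,t]}q(g(s))$ for $g\in\mathrm{C}([0,t];X)$. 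Applying this to $g(s)=T(t-s)f(s)$ and local equicontinuity then yields $q\bigl((T\ast f)(t)\bigr)\le t\sup_{s\in[0,t]}q(T(t-s)f(s))\le Ct\sup_{s\in[0,t]}p(f(s))$.

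\emph{Continuity of $T\ast f$; the main obstacle.} First I would record the elementary substitution $(T\ast f)(t)=\int_{0}^{t}T(t-s)f(s)\,\d s=\int_{0}^{t}T(s)f(t-s)\,\d s$, which holds for the Riemann integral of a continuous function by passing to reflected partitions. Fix $t_{0}\in[0,r]$, let $q\in\Gamma_{X}$, and take $p,C$ from local equicontinuity on $[0,r]$. For $t\in[0,r]$ set $m\coloneqq\min(t,t_{0})$, $M\coloneqq\max(t,t_{0})$; by additivity and linearity of the integral (\prettyref{rem:RS_int_subinterval_split}), $(T\ast f)(M)-(T\ast f)(m)=\int_{0}^{m}T(s)\bigl(f(M-s)-f(m-s)\bigr)\,\d s+\int_{m}^{M}T(s)f(M-s)\,\d s$. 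By the estimate just established together with local equicontinuity, the $q$-seminorm of the first integral is at most $mC\sup_{s\in[0,m]}p(f(M-s)-f(m-s))$, which tends to $0$ as $M-m\to 0$ because $(M-s)-(m-s)=M-m$ and $f$ is uniformly continuous on the compact interval $[0,r]$; the $q$-seminorm of the second integral is at most $(M-m)C\sup_{s\in[0,r]}p(f(s))\to 0$. Hence $q\bigl((T\ast f)(t)-(T\ast f)(t_{0})\bigr)\to 0$ as $t\to t_{0}$ for every $q\in\Gamma_{X}$, so $T\ast f$ is continuous. The one genuinely delicate point is this last step: in its unsubstituted form the integrand $\bigl(T(M-s)-T(m-s)\bigr)f(s)$ is merely pointwise-in-$s$ small and no dominated convergence is available in a locally convex space, so the change of variables --- which shifts the entire $t$-dependence onto $f$ --- is what makes the argument work. (Alternatively, one may keep the unsubstituted form and rely on the uniform strong continuity $\sup_{x\in K}p\bigl(T(\tau)x-x\bigr)\to 0$ as $\tau\to 0\rlim$ on the compact set $K\coloneqq f([0,r])$, itself provable by a routine finite-covering argument combining local equicontinuity with strong continuity.)
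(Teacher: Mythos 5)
Your proof is correct, but it deviates from the paper's argument in two places, both in an acceptable and in fact self-contained way. For Riemann integrability and the bound, the paper simply invokes \cite[Proposition 1.1, p.~232]{komatsu1964} and then reads the estimate off from Riemann sums together with local equicontinuity; you instead observe that the integrator $\alpha(s)=s\,\id_{X}$ has bounded semivariation with $SV_{q,q}^{[0,t]}(\alpha)\leq t$ and feed the continuous integrand into \prettyref{prop:RS_int_bounde_semivar} and \eqref{eq:RS_int_continuous}, which keeps everything inside \prettyref{sect:notions} and gives the same estimate (indeed with the supremum over $[0,t]$, as stated). The more substantial difference is the continuity of $T\ast f$: the paper keeps the increment in unsubstituted form, factors $T(t+h-s)-T(t-s)=T(t-s)(T(h)-\id)$ via the semigroup law, and then needs the uniform smallness $\sup_{s\in[0,r]}p((T(h)-\id)f(s))\to 0$, obtained from local equicontinuity, the compactness of $f([0,r])$, strong continuity and \cite[8.5.1 Theorem (b), p.~156]{jarchow1981} --- exactly the ``alternative'' you sketch in your closing parenthesis, and the mechanism the paper reuses later (e.g.\ in \prettyref{prop:mild_solution_solves_integrated_ACP} and \prettyref{prop:continuity_rs_int}). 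You instead perform the substitution $(T\ast f)(t)=\int_{0}^{t}T(s)f(t-s)\,\d s$, so the entire $t$-dependence sits on $f$ and only uniform continuity of $f$ on $[0,r]$ plus local equicontinuity are needed; this is arguably more elementary (no equicontinuity-on-compacta argument), treats left and right increments simultaneously, but is specific to the convolution with a Riemann integral, where such a change of variables is available --- it does not transfer to the Riemann--Stieltjes situation of \prettyref{sect:sg_bounded_semivar}, which is presumably why the paper argues the way it does.
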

\begin{proof}
The proof of the continuity of the map $[0,t]\ni s\mapsto T(t-s)f(s)\in X$ follows similarly to 
\cite[Proposition 5.3, p.~432]{kruse_seifert2022a} and 
then the Riemann integrability follows from \cite[Proposition 1.1, p.~232]{komatsu1964}. 
Thus the convolution $T\ast f$ is well-defined. Let $q\in\Gamma_{X}$. 
Due to the definition of the Riemann integral 
as a limit of Riemann sums and the local equicontinuity of the semigroup 
there are $p\in\Gamma_{X}$ and $C\geq 0$ such that
\[
q((T\ast f)(t))\leq t \sup_{s\in [0,t]}q(T(t-s)f(s))\leq Ct \sup_{s\in [0,r]}p(f(s)).
\]

Now, let us turn to the continuity of $T\ast f$ on $[0,r]$. 
Let $t\in [0,r)$ and $h\in (0,r-t)$. Then we have 
\[
 (T\ast f)(t+h)-(T\ast f)(t)
=\int_{0}^{t}(T(t+h-s)-T(t-s))f(s)\d s+\int_{t}^{t+h}T(t+h-s)f(s)\d s.
\]
Let $q\in\Gamma_{X}$. Then there are $p\in\Gamma_{X}$ and $C\geq 0$ such that 
\begin{flalign*}
&\hspace{0.37cm} q((T\ast f)(t+h)-(T\ast f)(t))\\
&\leq \sup_{s\in[0,t]}q((T(t+h-s)-T(t-s))f(s))+hC\sup_{s\in [t,t+h]}p(f(s))\\
&\leq \sup_{s\in[0,t]}q(T(t-s)(T(h)-\id)f(s))+hC\|f\|_{p}\\
&\leq C\sup_{s\in [0,r]}p((T(h)-\id)f(s))+hC\|f\|_{p}
\end{flalign*}
by the local equicontinuity of the semigroup. 
The local equicontinuity of the semigroup also implies that the family $(T(w)-\id)_{w\in [0,r-t]}$ 
in $\mathcal{L}(X)$ is equicontinuous. Thus $\lim_{h\to 0\rlim}\sup_{s\in[0,r]}p((T(h)-\id)f(s))=0$ 
by \cite[8.5.1 Theorem (b), p.~156]{jarchow1981}, the compactness of $f([0,r])$ and the strong continuity 
of the semigroup. This implies the right-continuity of $T\ast f$ on $[0,r)$. 
The left-continuity of $T\ast f$ on $(0,r]$ follows analogously and so $T\ast f$ is continuous on $[0,r]$.
\end{proof}

\begin{rem}\label{rem:strict_sol_is_mild_sol}
Let $r>0$, $X$ be a sequentially complete Hausdorff locally convex space, $(T(t))_{t\geq 0}$ a strongly continuous locally equicontinuous semigroup on $X$ with generator $A$, $f\in\mathrm{C}([0,r];X)$ and $x\in X$.
If the ACP \eqref{eq:ACP} has a strict solution $u$, then $x\in D(A)$, $Au\in\mathrm{C}([0,r];X)$, 
the strict solution is unique and fulfils
\[
u(t)= T(t)x+\int_{0}^{t}T(t-s)f(s)\d s= T(t)x+(T\ast f)(t), \quad t\in [0,r].
\]
The proof of this statement (cf.~\cite[2.1 Definition, p.~35]{agase1987}) is the same as in 
\cite[Proposition 2.4.3, p.~50]{lorenzirhandi2021}, where $X$ is a Banach space which is not relevant for the proof. 
Moreover, if $u$ is a strict solution, we have for every $q\in\Gamma_{X}$ that
\begin{equation}\label{eq:est_Au}
 \|Au\|_{q}
=\|u'-f\|_{q}  
\leq\|u'\|_{q}+\|f\|_{q}.
\end{equation}

Furthermore, if $x\in D(A)$, then $T(t)x\in D(A)$ for all $t\geq 0$ 
and the map $[0,\infty)\ni t\mapsto T(t)x\in X$ is continuously differentiable with continuous derivative given 
by $AT(t)x=T(t)Ax$ for all $t\geq 0$ by \cite[Proposition 1.2 (1), p.~260]{komura1968}. 
So, if the ACP \eqref{eq:ACP} has a strict solution $u$, then 
$T\ast f=u-T(\cdot)x$ is also continuously differentiable on $[0,r]$ and 
for every $q\in\Gamma_{X}$ there are $p\in\Gamma_{X}$ and $C\geq 0$ such that
\begin{align}\label{eq:estimate_strict_sol}
 \|u\|_{q,1}
\coloneqq & \sup_{k\in\{0,1\}}\|u^{(k)}\|_{q}
\leq \|T(\cdot)x\|_{q}+\|T\ast f\|_{q}+\|T(\cdot)Ax\|_{q}+\|(T\ast f)'\|_{q}\nonumber\\
\leq & C(p(x)+p(Ax)+\|f\|_{p})+\|(T\ast f)'\|_{q}
\end{align}  
by \prettyref{prop:riemann_int} and the local equicontinuity of $(T(t))_{t\geq 0}$. 
Further, $(T\ast f)(t)=u(t)-T(t)x\in D(A)$  and 
\begin{equation}\label{eq:deriv_conv}
(T\ast f)'(t)=u'(t)-AT(t)x= Au(t)+f(t)-AT(t)x=A(T\ast f)(t)+f(t)
\end{equation}
for all $t\in [0,r]$ if the ACP \eqref{eq:ACP} has a strict solution $u$. In particular, $A(T\ast f)=(T\ast f)'-f$ 
is continuous on $[0,r]$ in this case.
\end{rem}

\begin{defn}[{\cite[2.2 Definition, p.~35]{agase1987}}]\label{defn:mild_solution}
Let $r>0$, $X$ be a sequentially complete Hausdorff locally convex space, $(T(t))_{t\geq 0}$ a strongly continuous locally equicontinuous semigroup on $X$ with generator $A$, $f\in\mathrm{C}([0,r];X)$ and $x\in X$. The map 
\begin{equation}\label{eq:mild_solution}
u\colon [0,r]\to X,\; u(t)\coloneqq T(t)x+\int_{0}^{t}T(t-s)f(s)\d s= T(t)x+(T\ast f)(t),
\end{equation}
is called the \emph{mild solution} of the ACP \eqref{eq:ACP}.
\end{defn}

\begin{rem}\label{rem:mild_solution_cont}
Let $r>0$, $X$ be a sequentially complete Hausdorff locally convex space, $(T(t))_{t\geq 0}$ a strongly continuous semigroup locally equicontinuous semigroup on $X$ with generator $A$, $f\in\mathrm{C}([0,r];X)$ and $x\in X$. Then the mild solution $u$ 
of the ACP \eqref{eq:ACP} given by \eqref{eq:mild_solution} fulfils $u\in\mathrm{C}([0,r];X)$ by \prettyref{prop:riemann_int} and the strong continuity of the semigroup, and for every $q\in\Gamma_{X}$ 
there are $p\in\Gamma_{X}$ and $C\geq 0$ such that 
\[
\|u\|_{q}\leq C(p(x)+\|f\|_{p}).
\]
\end{rem}

\begin{rem}\label{rem:riemann_int_A_commute}
Let $a,b\in\R$ with $a<b$, $X$ and $Y$ be Hausdorff locally convex spaces and $f\colon [a,b]\to X$ Riemann integrable. 
If $A\colon D(A)\subset X\to Y$ is a sequentially closed linear map, $f([a,b])\subset D(A)$ and $Af$ Riemann integrable, then 
$\int_{a}^{b}f(s)\d s\in D(A)$ and 
\[
A\int_{a}^{b}f(s)\d s =\int_{a}^{b}Af(s)\d s.
\]
The proof of this statement is the same as in \cite[Proposition A.2.5 (ii), p.~419]{lorenzirhandi2021}, 
where $X$ and $Y$ are Banach spaces which is not relevant for the proof.   
\end{rem}

We need \prettyref{rem:riemann_int_A_commute} to generalise \cite[Remark 3.4.6, p.~73]{lorenzirhandi2021}, 
whose proof we adapt to our setting. This generalisation shows that even though the mild solution might not solve the 
ACP \eqref{eq:ACP} in a strict sense (see e.g.~\cite[Example 2.4.6, p.~51]{lorenzirhandi2021}) it always 
solves an integrated version of the ACP \eqref{eq:ACP}.

\begin{prop}\label{prop:mild_solution_solves_integrated_ACP}
Let $r>0$, $X$ be a sequentially complete Hausdorff locally convex space, $(T(t))_{t\geq 0}$ a strongly continuous semigroup locally equicontinuous on $X$ with generator $A$, $f\in\mathrm{C}([0,r];X)$ and $x\in X$. 
Then the mild solution $u$ of the ACP \eqref{eq:ACP} given by \eqref{eq:mild_solution} fulfils $\int_{0}^{t}u(s)\d s\in D(A)$ 
for all $t\in [0,r]$ and 
\[
u(t)=x+A\int_{0}^{t}u(s)\d s+\int_{0}^{t}f(s)\d s,\quad t\in [0,r].
\]
\end{prop}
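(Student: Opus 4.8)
The plan is to prove the integrated identity $u(t) = x + A\int_0^t u(s)\,\d s + \int_0^t f(s)\,\d s$ by exploiting the density of $D(A)$ in the sequentially complete space $X$ together with \prettyref{rem:riemann_int_A_commute} and \prettyref{rem:strict_sol_is_mild_sol}. Since the right-hand side only involves $A$ applied to an integral, the natural route is to reduce to the case $x \in D(A)$ and $f$ taking values in $D(A)$, where the mild solution is in fact a strict solution, prove the identity there by differentiating, and then pass to the limit.

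First I would recall from \prettyref{rem:strict_sol_is_mild_sol} and \prettyref{eq:deriv_conv} that if $x \in D(A)$ and $f \in \mathrm{C}^1([0,r];X)$ with $f(t) \in D(A)$ for all $t$ — actually it suffices that $T \ast f$ be continuously differentiable and $D(A)$-valued — then $u$ solves the ACP in the strict sense, so $u(t) \in D(A)$ and $u'(t) = Au(t) + f(t)$. In that situation the map $s \mapsto u(s)$ is continuous with values in $D(A)$, and $s \mapsto Au(s) = u'(s) - f(s)$ is continuous, hence Riemann integrable by \prettyref{prop:riemann_int}-type reasoning (continuity on a compact interval with values in a sequentially complete space). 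By \prettyref{rem:riemann_int_A_commute} (with $A$ sequentially closed — the generator is even closed on sequentially complete spaces) we get $\int_0^t u(s)\,\d s \in D(A)$ and $A\int_0^t u(s)\,\d s = \int_0^t Au(s)\,\d s = \int_0^t (u'(s) - f(s))\,\d s = u(t) - u(0) - \int_0^t f(s)\,\d s = u(t) - x - \int_0^t f(s)\,\d s$, which is exactly the claim.

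Next I would remove the regularity assumptions by approximation. Fix $x \in X$ and $f \in \mathrm{C}([0,r];X)$ arbitrary. Choose $x_n \in D(A)$ with $x_n \to x$ (density of $D(A)$), and choose $f_n \in \mathrm{C}([0,r];X)$ with values in $D(A)$ and $f_n \to f$ uniformly on $[0,r]$ — e.g.\ mollifying, or better, using that $t \mapsto \tfrac{1}{h}\int_0^h T(s)f(t)\,\d s$ lies in $D(A)$ and converges to $f$ uniformly as $h \to 0^+$ by strong continuity, equicontinuity of $f([0,r])$, and local equicontinuity of the semigroup; one should also arrange $f_n \in \mathrm{C}^1$ if needed for the strict-solution step, e.g.\ by a further time-convolution. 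Let $u_n$ be the mild solution for data $(x_n, f_n)$; by the previous paragraph $u_n$ satisfies the integrated identity. By \prettyref{rem:mild_solution_cont}, $u_n \to u$ uniformly on $[0,r]$, and consequently $\int_0^t u_n(s)\,\d s \to \int_0^t u(s)\,\d s$ in $X$ and $\int_0^t f_n(s)\,\d s \to \int_0^t f(s)\,\d s$ (continuity of the Riemann integral w.r.t.\ the sup-seminorms). Since $A\int_0^t u_n(s)\,\d s = u_n(t) - x_n - \int_0^t f_n(s)\,\d s$ converges in $X$ and $A$ is sequentially closed, we conclude $\int_0^t u(s)\,\d s \in D(A)$ with $A\int_0^t u(s)\,\d s$ equal to the limit, which is $u(t) - x - \int_0^t f(s)\,\d s$.

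The main obstacle I anticipate is the approximation step: constructing the regularized inhomogeneities $f_n$ with values in $D(A)$ that converge \emph{uniformly} to $f$ and are regular enough (e.g.\ $\mathrm{C}^1$, or at least with $T \ast f_n$ continuously differentiable) to invoke the strict-solution representation — all while only assuming $X$ sequentially complete and the semigroup locally equicontinuous. The time-averaging trick $f_h(t) := \tfrac1h\int_0^h T(s)f(t)\,\d s$ handles the $D(A)$-membership and uniform convergence cleanly (this is essentially the argument already used at the end of the proof of \prettyref{prop:riemann_int}), but one must be slightly careful that the needed Riemann integrals exist and that the closedness argument is applied to the generator as a sequentially closed operator. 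Alternatively, and perhaps more economically, one can bypass strict solutions entirely: write $u(t) = T(t)x + (T\ast f)(t)$, and treat the two terms separately — for $T(t)x$ use that $\int_0^t T(s)x\,\d s \in D(A)$ with $A\int_0^t T(s)x\,\d s = T(t)x - x$ (a standard fact for locally equicontinuous semigroups, provable by the same density-and-closedness scheme), and for the convolution use Fubini for the Riemann integral plus \prettyref{rem:riemann_int_A_commute} to get $A\int_0^t (T\ast f)(s)\,\d s = (T\ast f)(t) - \int_0^t f(s)\,\d s$; summing gives the result. I would present the density/approximation version as the cleanest, noting the Banach-space template \cite[Remark 3.4.6, p.~73]{lorenzirhandi2021}.
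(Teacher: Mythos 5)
Your primary route is genuinely different from the paper's. The paper proves the identity directly: it first checks that $(t,s)\mapsto T(t)f(s)$ is jointly continuous, applies Fubini to rewrite $\int_{0}^{t}u(s)\,\d s$ as $\int_{0}^{t}T(s)x\,\d s+\int_{0}^{t}\int_{w}^{t}T(s-w)f(w)\,\d s\,\d w$, and then uses the Komura corollary ($\int_{0}^{\tau}T(s)y\,\d s\in D(A)$ with $A\int_{0}^{\tau}T(s)y\,\d s=T(\tau)y-y$) together with \prettyref{rem:riemann_int_A_commute} applied to $\widetilde f(w)=\int_{w}^{t}T(s-w)f(w)\,\d s$. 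This is exactly the ``alternative, more economical'' argument you sketch in your closing sentences, so that part of your proposal coincides with the paper; it is shorter because it never touches strict solutions or regularization. Your preferred route (approximate $(x,f)$ by regular data, use strict solutions there, pass to the limit by sequential closedness) is viable in principle: $D(A)$ is dense since $X$ is sequentially complete, your time-averages $f_{h}(t)=\tfrac1h\int_{0}^{h}T(s)f(t)\,\d s$ are $D(A)$-valued with $Af_{h}(t)=\tfrac1h(T(h)f(t)-f(t))$ continuous and converge uniformly, $u_{n}\to u$ uniformly by the estimate in \prettyref{rem:mild_solution_cont} and linearity, and closedness of $A$ then yields the conclusion.

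The one point you must not gloss over is the step ``for regular data the mild solution is a strict solution''. This is not contained in \prettyref{rem:strict_sol_is_mild_sol} or \eqref{eq:deriv_conv}, which go in the opposite direction (they presuppose a strict solution). In the paper that existence statement is \prettyref{cor:suff_cond_strict_sol}, whose proof runs through \prettyref{prop:pre_cmax_reg_strict_sol} and \prettyref{prop:nec_suff_mild_strict}, and the latter is proved using precisely the proposition you are asked to prove; citing that machinery here would be circular. To keep your route self-contained you have to verify the step directly: for $x_{n}\in D(A)$ and $f_{n}$ with values in $D(A)$ and $Af_{n}$ continuous, \prettyref{rem:riemann_int_A_commute} gives $(T\ast f_{n})(t)\in D(A)$ with $A(T\ast f_{n})=T\ast Af_{n}$, and then the difference quotient $\tfrac{1}{h}\bigl((T\ast f_{n})(t+h)-(T\ast f_{n})(t)\bigr)=\tfrac{1}{h}\bigl(T(h)-\id\bigr)(T\ast f_{n})(t)+\tfrac1h\int_{t}^{t+h}T(t+h-s)f_{n}(s)\,\d s$ converges to $A(T\ast f_{n})(t)+f_{n}(t)$ (similarly from the left), so $u_{n}$ is a strict solution and $A\int_{0}^{t}u_{n}(s)\,\d s=\int_{0}^{t}Au_{n}(s)\,\d s=u_{n}(t)-x_{n}-\int_{0}^{t}f_{n}(s)\,\d s$. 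With that supplied, your limit passage is correct; the trade-off is that your route needs this extra regularization and differentiation work, whereas the paper's Fubini argument avoids strict solutions entirely.
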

\begin{proof}
Let $q\in\Gamma_{X}$. Then we have 
\begin{align*}
 q(T(t)f(s)-T(t_{0})f(s_{0}))
&\leq q(T(t-t_{0})f(s))+q(T(t_{0})(f(s)-f(s_{0}))\\
&\leq \sup_{w\in [0,r]}q(T(t-t_{0})f(w))+q(T(t_{0})(f(s)-f(s_{0}))
\end{align*}
for all $t,t_{0},s,s_{0}\in [0,r]$. Due to \cite[8.5.1 Theorem (b), p.~156]{jarchow1981} combined with the local equicontinuity of the semigroup, the compactness of $f([0,r])$ 
and the strong continuity of the semigroup the first summand converges to $0$ as $t\to t_{0}$. 
The second summand converges to $0$ as $s\to s_{0}$ since $f$ and $T(t_{0})$ are continuous. 
Hence the map $[0,r]^{2}\ni (t,s)\mapsto T(t)f(s)\in X$ is continuous. Since $u$ is continuous 
by \prettyref{rem:mild_solution_cont}, in particular Riemann integrable, we obtain by Fubini's theorem 
\begin{align*}
 \int_{0}^{t}u(s)\d s
&=\int_{0}^{t}T(s)x\d s +\int_{0}^{t}\int_{0}^{s}T(s-w)f(w)\d w\d s\\
&=\int_{0}^{t}T(s)x\d s +\int_{0}^{t}\int_{w}^{t}T(s-w)f(w)\d s\d w 
\end{align*}
for all $t\in [0,r]$. The first summand belongs to $D(A)$ by \cite[Corollary, p.~261]{komura1968}. 
Let us turn to the second summand. We have
\[
\int_{w}^{t}T(s-w)f(w)\d s=\int_{0}^{t-w}T(s)f(w)\d s\in D(A)
\] 
by a change of variables and 
\[
A\int_{w}^{t}T(s-w)f(w)\d s=T(t-w)f(w)-f(w)
\]
for all $w\in [0,t]$ by \cite[Corollary, p.~261]{komura1968}. Setting $\widetilde{f}\colon [0,t]\to X$, 
$\widetilde{f}(w)\coloneqq \int_{w}^{t}T(s-w)f(w)\d s$, we observe that $A\widetilde{f}$ is Riemann integrable on 
$[0,t]$ by \prettyref{prop:riemann_int}, which implies that $\int_{0}^{t}u(s)\d s\in D(A)$ 
by \prettyref{rem:riemann_int_A_commute}. Moreover, we obtain 
\[
A\int_{0}^{t}u(s)\d s=T(t)x-x+\int_{0}^{t}T(t-w)f(w)\d w-\int_{0}^{t}f(w)\d w
\]
by \cite[Corollary, p.~261]{komura1968} and \prettyref{rem:riemann_int_A_commute}, implying our statement. 
\end{proof}

\begin{rem}\label{rem:mean_int_to_zero}
Let $a,b\in\R$ with $a<b$, $X$ a sequentially complete Hausdorff locally convex spaces and $f\in\mathrm{C}([a,b];X)$. 
Then it holds
\[
\lim_{h\to 0\rlim}\frac{1}{h}\int_{t}^{t+h}f(s)\d s=f(t)\text{ and }\lim_{h\to 0\llim}\frac{1}{h}\int_{w+h}^{w}f(s)\d s=-f(w)
\]
for all $t\in [a,b)$ and $w\in (a,b]$. Indeed, for $q\in\Gamma_{X}$ and $t\in [a,b)$ we have  
\[
 q\Bigl(\frac{1}{h}\int_{t}^{t+h}f(s)\d s -f(t)\Bigr)
=q\Bigl(\frac{1}{h}\int_{t}^{t+h}f(s)-f(t)\d s\Bigr)
\leq \sup_{s\in [t,t+h]}q(f(s)-f(t))
\]
for all $h\in (0,b-t]$. The continuity of $f$ implies our statement for $t\in [a,b)$. The statement for $w\in (a,b]$ 
follows analogously. 
\end{rem}

Now, we are ready to give necessary and sufficient conditions for the existence of a strict solution of the 
ACP \eqref{eq:ACP} by means of the mild solution, which generalise \cite[Proposition 10.1.4, p.~110]{isem25}.

\begin{prop}\label{prop:nec_suff_mild_strict}
Let $r>0$, $X$ be a sequentially complete Hausdorff locally convex space, $(T(t))_{t\geq 0}$ a strongly continuous 
locally equicontinuous semigroup on $X$ with generator $A$, $f\in\mathrm{C}([0,r];X)$, $x\in X$ and $u$ 
the mild solution of the ACP \eqref{eq:ACP} given by \eqref{eq:mild_solution}. 
Then the following assertions are equivalent.
\begin{enumerate}
\item[(a)] $u$ is a strict solution of the ACP \eqref{eq:ACP}.
\item[(b)] $u(t)\in D(A)$ for all $t\in [0,r]$ and $Au\in\mathrm{C}([0,r];X)$.
\item[(c)] $u\in\mathrm{C}^{1}([0,r];X)$.
\end{enumerate}
\end{prop}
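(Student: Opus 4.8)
The plan is to establish the cycle of implications (a) $\Rightarrow$ (c) $\Rightarrow$ (b) $\Rightarrow$ (a), with the integrated identity of \prettyref{prop:mild_solution_solves_integrated_ACP} and the sequential closedness of the generator $(A,D(A))$ doing all the work; the standing sequential completeness of $X$ is what makes the latter available (and is already used through \prettyref{rem:riemann_int_A_commute}).

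The implication (a) $\Rightarrow$ (c) is nothing but \prettyref{defn:strict_solution}. For (c) $\Rightarrow$ (b), I would set $v(t)\coloneqq\int_{0}^{t}u(s)\d s$. By \prettyref{prop:mild_solution_solves_integrated_ACP} we have $v(t)\in D(A)$ for all $t\in[0,r]$ and $Av(t)=u(t)-x-\int_{0}^{t}f(s)\d s$; under (c) the function $u$ is $\mathrm{C}^{1}$ and, by \prettyref{rem:mean_int_to_zero}, so is $t\mapsto\int_{0}^{t}f(s)\d s$ (with derivative $f$), hence $Av\in\mathrm{C}^{1}([0,r];X)$ with $(Av)'=u'-f$. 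Fixing $t_{0}\in[0,r]$ and choosing a sequence $h_{n}\to 0$ with $t_{0}+h_{n}\in[0,r]$ (one-sided at the endpoints), one has $\tfrac1{h_{n}}\bigl(v(t_{0}+h_{n})-v(t_{0})\bigr)\to v'(t_{0})=u(t_{0})$ while $A\bigl(\tfrac1{h_{n}}(v(t_{0}+h_{n})-v(t_{0}))\bigr)=\tfrac1{h_{n}}\bigl(Av(t_{0}+h_{n})-Av(t_{0})\bigr)\to(Av)'(t_{0})=u'(t_{0})-f(t_{0})$. Since $A$ is sequentially closed this forces $u(t_{0})\in D(A)$ and $Au(t_{0})=u'(t_{0})-f(t_{0})$; as $t_{0}$ was arbitrary, $u([0,r])\subset D(A)$ and $Au=u'-f\in\mathrm{C}([0,r];X)$, i.e.\ (b).

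For (b) $\Rightarrow$ (a), I would again invoke \prettyref{prop:mild_solution_solves_integrated_ACP} to write $u(t)=x+A\int_{0}^{t}u(s)\d s+\int_{0}^{t}f(s)\d s$. Under (b), $u([0,r])\subset D(A)$ and $Au$ is continuous, hence Riemann integrable, so \prettyref{rem:riemann_int_A_commute} applies to the continuous function $u$ and gives $A\int_{0}^{t}u(s)\d s=\int_{0}^{t}Au(s)\d s$. Therefore $u(t)=x+\int_{0}^{t}\bigl(Au(s)+f(s)\bigr)\d s$ with continuous integrand, so by \prettyref{rem:mean_int_to_zero} $u\in\mathrm{C}^{1}([0,r];X)$ with $u'(t)=Au(t)+f(t)$; together with $u(0)=x$ and $u(t)\in D(A)$ this says precisely that $u$ is a strict solution.

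I expect the only genuinely delicate point to be the interchange of differentiation with the unbounded operator $A$ in the step (c) $\Rightarrow$ (b): this is where sequential closedness of the generator is essential, and it is the reason one cannot dispense with passing through the integrated form of the ACP. The remaining arguments are routine manipulations of Riemann integrals on top of \prettyref{prop:mild_solution_solves_integrated_ACP}.
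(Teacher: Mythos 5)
Your proposal is correct and uses essentially the same ingredients as the paper: the integrated identity of \prettyref{prop:mild_solution_solves_integrated_ACP} (whose difference quotients are exactly the paper's key identity), \prettyref{rem:riemann_int_A_commute}, \prettyref{rem:mean_int_to_zero}, and the (sequential) closedness of the generator. The only difference is organizational --- you close the cycle (a)$\Rightarrow$(c)$\Rightarrow$(b)$\Rightarrow$(a), whereas the paper proves (b)$\Rightarrow$(a) and (c)$\Rightarrow$(a) separately; your step (c)$\Rightarrow$(b) is the paper's (c)$\Rightarrow$(a) argument in disguise, so nothing substantive changes.
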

\begin{proof}
The implications (a)$\Rightarrow$(b) and (a)$\Rightarrow$(c) hold by the definition of a strict solution 
and \prettyref{rem:strict_sol_is_mild_sol}. Concerning the converse implications, we note the following observation. 
Due to \prettyref{prop:mild_solution_solves_integrated_ACP} we have
\begin{equation}\label{eq:strict_mild}
 \frac{u(t+h)-u(t)}{h}
=A\Bigl(\frac{1}{h}\int_{t}^{t+h}u(s)\d s\Bigr)+\frac{1}{h}\int_{t}^{t+h}f(s)\d s
\end{equation}
for all $t\in [0,r)$ and $h\in (0,r-t)$, and 
\[
 \frac{u(t+h)-u(t)}{h}
=-\frac{u(t)-u(t+h)}{h}
=A\Bigl(-\frac{1}{h}\int_{t+h}^{t}u(s)\d s\Bigr)-\frac{1}{h}\int_{t+h}^{t}f(s)\d s
\]
for all $t\in (0,r]$ and $h\in (-t,0)$, respectively.

(b)$\Rightarrow$(a) Let $u(t)\in D(A)$ for all $t\in [0,r]$ and $Au\in\mathrm{C}([0,r];X)$. We have 
\[
A\Bigl(\frac{1}{h}\int_{t}^{t+h}u(s)\d s\Bigr)=\frac{1}{h}\int_{t}^{t+h}Au(s)\d s
\]
for all $t\in [0,r)$ and $h\in (0,r-t)$, and 
\[
A\Bigl(-\frac{1}{h}\int_{t+h}^{t}u(s)\d s\Bigr)=-\frac{1}{h}\int_{t+h}^{t}Au(s)\d s
\]
for all $t\in (0,r]$ and $h\in (-t,0)$ by \prettyref{rem:riemann_int_A_commute}, respectively. Hence we obtain that $u$ is differentiable in $t\in [0,r]$ and 
$u'(t)=Au(t)+f(t)$ by \prettyref{rem:mean_int_to_zero} in combination with the closedness of $A$ by 
\cite[Proposition 1.4, p.~262]{komura1968}. Since $Au$ and $f$ are continuous, 
$u'$ is also continuous on $[0,r]$, yielding that $u$ is a strict solution. 

(c)$\Rightarrow$(a) Let $u\in\mathrm{C}^{1}([0,r];X)$. Then the left-hand side of \eqref{eq:strict_mild} 
converges to $u'$ and we get by \prettyref{rem:mean_int_to_zero} that 
\[
u'(t)-f(t)=\lim_{h\to 0\rlim}A\Bigl(\frac{1}{h}\int_{t}^{t+h}u(s)\d s\Bigr)
\]
for all $t\in [0,r)$. The closedness of $A$ and \prettyref{rem:mean_int_to_zero} imply that $u(t)\in D(A)$ and $u'(t)-f(t)=Au(t)$ for all 
$t\in [0,r]$ where the case $t=r$ is handled analogously. It follows that $u$ is a strict solution. 
\end{proof}

We may also phrase \prettyref{prop:nec_suff_mild_strict} in terms of the convolution $T\ast f$ 
(cf.~\cite[Chap.~4, Theorem 2.4, p.~107]{pazy1983} in the case of a Banach space $X$).

\begin{prop}\label{prop:pre_cmax_reg_strict_sol}
Let $r>0$ and $(T(t))_{t\geq 0}$ a strongly continuous locally 
equicontinuous semigroup on a sequentially complete Hausdorff locally convex space $X$ with generator $A$, 
$f\in\mathrm{C}([0,r];X)$ and $x\in D(A)$. Then the following assertions are equivalent.
\begin{enumerate}
\item[(a)] The ACP \eqref{eq:ACP} has a strict solution.
\item[(b)] $(T\ast f)(t)\in D(A)$ for all $t\in [0,r]$ and $A(T\ast f)\in \mathrm{C}([0,r];X)$. 
\item[(c)] $T\ast f\in\mathrm{C}^{1}([0,r];X)$.
\end{enumerate}
\end{prop}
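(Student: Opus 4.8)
The plan is to reduce Proposition~\ref{prop:pre_cmax_reg_strict_sol} to the already-proved Proposition~\ref{prop:nec_suff_mild_strict} by writing the mild solution $u$ of the ACP~\eqref{eq:ACP} as $u = T(\cdot)x + (T\ast f)$ and carefully transferring the three conditions (a), (b), (c) between $u$ and $T\ast f$ using the hypothesis $x\in D(A)$. The key structural fact is that, since $x\in D(A)$, the orbit $t\mapsto T(t)x$ is an especially well-behaved object: by \prettyref{rem:strict_sol_is_mild_sol} (citing \cite[Proposition 1.2 (1), p.~260]{komura1968}) it is continuously differentiable on $[0,\infty)$ with $T(t)x\in D(A)$, $\frac{d}{dt}T(t)x = AT(t)x = T(t)Ax$, and all of $T(\cdot)x$, $T(\cdot)Ax$ are continuous. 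In particular $T(\cdot)x$ itself is a strict solution of the homogeneous ACP ($f\equiv 0$, initial value $x$), so adding or subtracting it preserves each of the properties (a), (b), (c).

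First I would prove (a)$\Leftrightarrow$(a$'$), where (a$'$) reads ``$u$ is a strict solution of the ACP~\eqref{eq:ACP}''. This is immediate: the mild solution of the ACP with initial value $x$ is exactly $u$ by \prettyref{defn:mild_solution}, and by \prettyref{rem:strict_sol_is_mild_sol} any strict solution of~\eqref{eq:ACP} must coincide with the mild solution $u$; conversely if $u$ is a strict solution there is nothing to prove. Next, apply \prettyref{prop:nec_suff_mild_strict} to get that (a$'$) is equivalent to ``$u(t)\in D(A)$ for all $t$ and $Au\in\mathrm{C}([0,r];X)$'' and to ``$u\in\mathrm{C}^1([0,r];X)$''. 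It then remains to translate these two statements about $u$ into the corresponding statements (b) and (c) about $T\ast f = u - T(\cdot)x$.

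For the translation I would argue as follows. Since $x\in D(A)$, we have $T(t)x\in D(A)$ for all $t$, hence $u(t)\in D(A) \iff (T\ast f)(t)\in D(A)$, and when both hold, $Au(t) = AT(t)x + A(T\ast f)(t) = T(t)Ax + A(T\ast f)(t)$; as $t\mapsto T(t)Ax$ is continuous (again \cite[Proposition 1.2 (1), p.~260]{komura1968}), we get $Au\in\mathrm{C}([0,r];X)\iff A(T\ast f)\in\mathrm{C}([0,r];X)$. This gives (b of \prettyref{prop:nec_suff_mild_strict})$\iff$(b). Similarly, since $t\mapsto T(t)x$ lies in $\mathrm{C}^1([0,r];X)$, we have $u\in\mathrm{C}^1([0,r];X)\iff T\ast f\in\mathrm{C}^1([0,r];X)$, giving (c of \prettyref{prop:nec_suff_mild_strict})$\iff$(c). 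Chaining these equivalences with \prettyref{prop:nec_suff_mild_strict} and the identification (a)$\iff$(a$'$) completes the proof.

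I do not expect a serious obstacle here; the only point requiring a little care is the bookkeeping that the hypothesis $x\in D(A)$ (rather than merely $x\in X$) is genuinely used — it is what makes $T(\cdot)x$ differentiable and $D(A)$-valued, so that the decomposition $u = T(\cdot)x + (T\ast f)$ respects all three regularity properties. Without $x\in D(A)$ one could still hope for the equivalence of (b) and (c) for $T\ast f$ among themselves (indeed \prettyref{prop:nec_suff_mild_strict} would apply to the mild solution with this $f$ and $x=0$), but the link to strict solvability of~\eqref{eq:ACP} with the given $x$ would break, since a strict solution forces $x\in D(A)$ anyway. One should also note in passing that all the relevant integrals (the convolution $T\ast f$ and its properties) are supplied by \prettyref{prop:riemann_int}, and the closedness of $A$ used implicitly in identifying $A(T\ast f)$ is \cite[Proposition 1.4, p.~262]{komura1968}.
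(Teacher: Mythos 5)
Your proposal is correct and follows essentially the same route as the paper: both rest on the decomposition $u=T(\cdot)x+(T\ast f)$, the observation (from \cite[Proposition 1.2 (1), p.~260]{komura1968}) that for $x\in D(A)$ the orbit $t\mapsto T(t)x$ is $D(A)$-valued and continuously differentiable with derivative $AT(\cdot)x=T(\cdot)Ax$, the identification of any strict solution with the mild solution via \prettyref{rem:strict_sol_is_mild_sol}, and the transfer of the three conditions through \prettyref{prop:nec_suff_mild_strict}. No gaps.
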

\begin{proof}
First, we observe that $T(t)x\in D(A)$ for all $t\geq 0$ 
and the map $[0,\infty)\ni t\mapsto T(t)x\in X$ is continuously differentiable with continuous derivative 
equal to $AT(\cdot)x$ by \cite[Proposition 1.2 (1), p.~260]{komura1968} since $x\in D(A)$. 

(a)$\Rightarrow$(b) This implication follows from \prettyref{rem:strict_sol_is_mild_sol}.

(b)$\Rightarrow$(a) Let $u$ denote the mild solution of the ACP \eqref{eq:ACP} given by \eqref{eq:mild_solution}. 
Then $u(t)= T(t)x-(T\ast f)(t)\in D(A)$ and the map 
$[0,r]\ni t\mapsto Au(t)= AT(t)x-A(T\ast f)(t)\in X$ is well-defined and continuous by our first observation 
and our assumption. Hence the mild solution $u$ is a strict solution of the ACP \eqref{eq:ACP} 
by \prettyref{prop:nec_suff_mild_strict}.

(a)$\Leftrightarrow$(c) Using that the mild solution $u$ fulfils $u(t)= T(t)x-(T\ast f)(t)$ for all $t\in[0,r]$, we 
deduce our statement from our first observation, \prettyref{rem:strict_sol_is_mild_sol} 
and \prettyref{prop:nec_suff_mild_strict}.
\end{proof}

Further, we have the following sufficient conditions which guarantee the existence of a strict solution. 
They are well-known in the case of strongly continuous semigroups on Banach spaces 
(see e.g.~\cite[Theorem 10.1.3, p.~110]{isem25}, \cite[Theorem, p.~84]{goldstein1985}, 
\cite[Theorem 2.4.7, p.~51]{lorenzirhandi2021} and \cite[Chap.~4, Corollaries 2.5, 2.6, p.~107--108]{pazy1983}). 
The underlying idea of our proof comes from the proof of \cite[Theorem 10.1.3, p.~110]{isem25}, 
which we adjust to our setting. 

\begin{cor}\label{cor:suff_cond_strict_sol}
Let $r>0$, $X$ be a sequentially complete Hausdorff locally convex space, 
$(T(t))_{t\geq 0}$ a strongly continuous locally equicontinuous semigroup on $X$ with generator $A$ 
and $x\in D(A)$. If 
\begin{enumerate}
\item[(i)] $f\in \mathrm{C}([0,r];X)$, $f(t)\in D(A)$ for all $t\in [0,r]$ and $Af\in\mathrm{C}([0,r];X)$, or
\item[(ii)] $f\in\mathrm{C}^{1}([0,r];X)$, 
\end{enumerate}
then the ACP \eqref{eq:ACP} has a strict solution $u$. Moreover, for every $q\in\Gamma_{X}$ there are 
$p\in\Gamma_{X}$ and $C\geq 0$ such that 
\[
\|u\|_{q,1}+\|Au\|_{q}\leq  C(p(x)+p(Ax)+\|f\|_{p}+\|Af\|_{p})
\]
in case (i) and 
\[
\|u\|_{q,1}+\|Au\|_{q}\leq  C(p(x)+p(Ax)+\|f\|_{p,1})
\]
in case (ii).
\end{cor}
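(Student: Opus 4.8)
The plan is to reduce both cases to \prettyref{prop:pre_cmax_reg_strict_sol}: since $x\in D(A)$, it suffices to prove that $(T\ast f)(t)\in D(A)$ for every $t\in[0,r]$ and that $A(T\ast f)\in\mathrm{C}([0,r];X)$, and then to read off the norm bounds from the explicit formula obtained for $A(T\ast f)$. \emph{Case (i).} Here $f(s)\in D(A)$ for every $s$, hence $T(t-s)f(s)\in D(A)$ with $AT(t-s)f(s)=T(t-s)Af(s)$ by \cite[Proposition 1.2 (1), p.~260]{komura1968}. The map $s\mapsto T(t-s)Af(s)$ is continuous, by the same argument as in \prettyref{prop:riemann_int} applied to $Af\in\mathrm{C}([0,r];X)$, hence Riemann integrable, and $A$ is (sequentially) closed by \cite[Proposition 1.4, p.~262]{komura1968}. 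Thus \prettyref{rem:riemann_int_A_commute} gives $(T\ast f)(t)\in D(A)$ with
\[
A(T\ast f)(t)=\int_{0}^{t}T(t-s)Af(s)\d s=(T\ast Af)(t),
\]
which is continuous on $[0,r]$ by \prettyref{prop:riemann_int}. So \prettyref{prop:pre_cmax_reg_strict_sol} provides the strict solution $u=T(\cdot)x+T\ast f$.

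\emph{Case (ii).} Now $f$ need not be $D(A)$-valued, which is the crux; I would use the Fubini device already employed in \prettyref{prop:mild_solution_solves_integrated_ACP}. Writing $f(s)=f(0)+\int_{0}^{s}f'(\tau)\d\tau$ splits $T\ast f$ into $\int_{0}^{t}T(t-s)f(0)\d s$ and $\int_{0}^{t}T(t-s)\int_{0}^{s}f'(\tau)\d\tau\d s$. The first summand lies in $D(A)$ with $A$-image $T(t)f(0)-f(0)$ by \cite[Corollary, p.~261]{komura1968}. For the second, joint continuity of $(s,\tau)\mapsto T(t-s)f'(\tau)$ (same argument as in \prettyref{prop:mild_solution_solves_integrated_ACP}) and Fubini rewrite it as $\int_{0}^{t}\bigl(\int_{0}^{t-\tau}T(\sigma)\d\sigma\bigr)f'(\tau)\d\tau$; each value $\int_{0}^{t-\tau}T(\sigma)f'(\tau)\d\sigma$ lies in $D(A)$ with $A$-image $T(t-\tau)f'(\tau)-f'(\tau)$, which is continuous in $\tau$, so \prettyref{rem:riemann_int_A_commute} applies again and yields that this summand lies in $D(A)$ with $A$-image $(T\ast f')(t)-f(t)+f(0)$. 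Adding, $(T\ast f)(t)\in D(A)$ and
\[
A(T\ast f)(t)=T(t)f(0)+(T\ast f')(t)-f(t),
\]
which is continuous by \prettyref{prop:riemann_int} and strong continuity; \prettyref{prop:pre_cmax_reg_strict_sol} again produces the strict solution. (Alternatively, after the substitution $\sigma=t-s$ a direct difference-quotient computation shows $T\ast f\in\mathrm{C}^{1}([0,r];X)$ with $(T\ast f)'(t)=T(t)f(0)+(T\ast f')(t)$, which is part~(c) of \prettyref{prop:pre_cmax_reg_strict_sol}.)

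\emph{Norm estimates.} Once $u$ is known to be a strict solution, \eqref{eq:deriv_conv} and $AT(\cdot)x=T(\cdot)Ax$ give $u(t)=T(t)x+(T\ast f)(t)$, $u'(t)=T(t)Ax+A(T\ast f)(t)+f(t)$ and $Au(t)=u'(t)-f(t)$, with $A(T\ast f)$ as computed in each case. Bounding every occurrence of $T(t)(\cdot)$ by local equicontinuity ($q(T(t)y)\leq Cp(y)$ for $t\in[0,r]$) and every convolution by \prettyref{prop:riemann_int}, one obtains finitely many such estimates; enlarging $p\in\Gamma_{X}$ once so that it dominates (up to a constant) $q$ and all the seminorms that appeared, and absorbing the fixed factor $r$ into $C$, yields
\[
\|u\|_{q,1}+\|Au\|_{q}\leq C\bigl(p(x)+p(Ax)+\|f\|_{p}+\|Af\|_{p}\bigr)
\]
in case (i); in case (ii) one argues in the same way, using in addition $p(f(0))\leq\|f\|_{p}\leq\|f\|_{p,1}$ and $\|f'\|_{p}\leq\|f\|_{p,1}$, to obtain the bound with $\|f\|_{p,1}$ on the right.

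\emph{Main obstacle.} The only genuinely non-routine step is case (ii): since $f$ is not $D(A)$-valued one cannot bring $A$ inside the convolution by differentiating under the integral, so the Fubini rewriting — which rests on joint continuity of $(s,\tau)\mapsto T(t-s)f'(\tau)$ and on sequential completeness for the vector-valued Riemann--Fubini theorem — must be set up with care; everything else is bookkeeping with \prettyref{rem:riemann_int_A_commute}, \prettyref{prop:riemann_int} and local equicontinuity.
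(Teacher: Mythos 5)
Your argument is correct, and case (i) together with the derivation of the seminorm estimates follows the paper's own proof essentially verbatim (reduce to \prettyref{prop:pre_cmax_reg_strict_sol} via $A(T\ast f)=T\ast Af$, then bookkeep with \eqref{eq:est_Au}, \eqref{eq:estimate_strict_sol}, \eqref{eq:deriv_conv} and local equicontinuity). In case (ii), however, you take a genuinely different route: the paper proves directly that $T\ast f\in\mathrm{C}^{1}([0,r];X)$ by a change of variables and a difference-quotient computation (uniform continuity of $f'$, fundamental theorem of calculus, and an argument as in \prettyref{rem:mean_int_to_zero}), arriving at $(T\ast f)'=(T\ast f')+T(\cdot)f(0)$ and invoking condition (c) of \prettyref{prop:pre_cmax_reg_strict_sol}; you instead write $f(s)=f(0)+\int_{0}^{s}f'(\tau)\d\tau$, reuse the Fubini device and joint-continuity argument of \prettyref{prop:mild_solution_solves_integrated_ACP} together with \cite[Corollary, p.~261]{komura1968} and \prettyref{rem:riemann_int_A_commute}, and verify condition (b) with $A(T\ast f)=T(\cdot)f(0)+(T\ast f')-f$, which agrees with the paper's formula via \eqref{eq:deriv_conv}. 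Your route buys a proof that avoids any new limit computation, recycling machinery already established (closedness of $A$, Komura's corollary, the Fubini argument), at the cost of having to justify once more the joint continuity and the Riemann integrability of $\tau\mapsto\int_{0}^{t-\tau}T(\sigma)f'(\tau)\d\sigma$ and of its $A$-image — which is at the same level of rigor as the paper's own Proposition on the integrated ACP. The paper's route buys the derivative formula \eqref{eq:deriv_conv_inhom_deriv} directly, which it then feeds into the estimate; your estimate instead runs through the explicit expression for $A(T\ast f)$, $u'=T(\cdot)Ax+A(T\ast f)+f$ and $Au=u'-f$, which yields the same bounds (with $p(f(0))\leq\|f\|_{p,1}$ and $\|f'\|_{p}\leq\|f\|_{p,1}$ in case (ii)). Both arguments are sound; no gap.
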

\begin{proof}
(i) If $f\in\mathrm{C}([0,r];X)$, $f(t)\in D(A)$ for all $t\in [0,r]$ and $Af\in \mathrm{C}([0,r];X)$, then 
$AT(t-s)f(s)=T(t-s)Af(s)$ for all $t\in[0,r]$ and $s\in [0,t]$ by \cite[Proposition 1.2 (1), p.~260]{komura1968} 
and the map $[0,t]\ni s\mapsto AT(t-s)f(s)=T(t-s)Af(s)\in X$ is continuous for every $t\in [0,r]$ 
by \prettyref{prop:riemann_int} because $Af\in\mathrm{C}([0,r];X)$. This implies 
that $(T\ast f)(t)=\int_{0}^{t}T(t-s)f(s)\d s\in D(A)$ and 
\begin{equation}\label{eq:A_conv}
 A(T\ast f)(t)
=\int_{0}^{t}T(t-s)Af(s)\d s
=(T\ast Af)(t)
\end{equation}
for every $t\in [0,r]$ by \prettyref{rem:riemann_int_A_commute}. 
Due to \prettyref{prop:riemann_int} $A(T\ast f)=T\ast Af\in \mathrm{C}([0,r];X)$ because $Af\in \mathrm{C}([0,r];X)$. 
Thus the ACP \eqref{eq:ACP} has a strict solution by \prettyref{prop:pre_cmax_reg_strict_sol}.

(ii) By a change of vairables we have
\begin{align*}
&\phantom{=} \frac{(T\ast f)(t+h)-(T\ast f)(t)}{h}
 =\frac{1}{h}\Bigl(\int_{0}^{t+h}T(s)f(t+h-s)\d s-\int_{0}^{t}T(s)f(t-s)\d s\Bigr)\\
&=\int_{0}^{t}T(s)\frac{f(t+h-s)-f(t-s)}{h}\d s+\frac{1}{h}\int_{t}^{t+h}T(s)f(t+h-s)\d s
\eqqcolon I_{1,h}+I_{2,h}
\end{align*}
for all $t\in [0,r)$ and $h\in (0,r-t)$. Since $f\in\mathrm{C}^{1}([0,r];X)$, 
we obtain that $f'$ is uniformly continuous on $[0,r]$. Thus for every $\varepsilon>0$ and $p\in\Gamma_{X}$ 
there is $\delta>0$ such that $p(f'(t_{1})-f'(t_{2}))<\varepsilon$ for all $t_{1},t_{2}\in[0,r]$ 
with $|t_{1}-t_{2}|<\delta$. Let $h<\delta$ and $q\in\Gamma_{X}$. 
Then there are $p\in\Gamma_{X}$ and $C\geq 0$ such that 
\begin{align*}
  q\Bigl(I_{1,h}-\int_{0}^{t}T(s)f'(t-s)\d s\Bigr)
&=q\Bigl(\int_{0}^{t}T(s)\frac{1}{h}\int_{0}^{h}f'(w+t-s)-f'(t-s)\d w\d s\Bigr)\\
&\leq C \sup_{s\in [0,t]}\sup_{w\in [0,h]}p(f'(w+t-s)-f'(t-s)) \leq C\varepsilon
\end{align*}
by the fundamental theorem of calculus and the local equicontinuity of the semigroup. 
Hence we have $\lim_{h\to 0\rlim}I_{1,h}=\int_{0}^{t}T(s)f'(t-s)\d s$ for all $t\in [0,r)$. 
Similarly to \prettyref{rem:mean_int_to_zero} we get $\lim_{h\to 0\rlim}I_{2,h}=T(t)f(0)$ for all $t\in [0,r)$. 
Therefore $T\ast f$ is right-differentiable on $[0,r)$. Analogously we can show that $T\ast f$ is 
left-differentiable on $(0,r]$ and that the left- and right-derivatives coincide on $(0,r)$. 
Thus $T\ast f$ is differentiable on $[0,r]$ and 
\begin{equation}\label{eq:deriv_conv_inhom_deriv}
(T\ast f)'(t)=\int_{0}^{t}T(s)f'(t-s)\d s+T(t)f(0)=(T\ast f')(t)+T(t)f(0)
\end{equation}
for all $t\in[0,r]$ where we used a change of variables in the last equation. 
Due to \prettyref{prop:riemann_int}, $f\in\mathrm{C}^{1}([0,r];X)$ and the strong continuity of the semigroup 
$(T\ast f)'$ is also continuous. 
Thus the ACP \eqref{eq:ACP} has a strict solution by \prettyref{prop:pre_cmax_reg_strict_sol}. 

Let us turn to the estimate of $\|u\|_{q,1}+\|Au\|_{q}$ in our statement. 
Let $q\in\Gamma_X$. By \eqref{eq:estimate_strict_sol} there are $p_0\in\Gamma_X$ and $C_0\geq 0$ such that 
\begin{equation}\label{eq:cont_dep}
\|u\|_{q,1}+\|Au\|_{q}\underset{\eqref{eq:est_Au}, \eqref{eq:estimate_strict_sol}}{\leq} 
 2C_{0}(p_{0}(x)+p_{0}(Ax)+\|f\|_{p_{0}})+2\|(T\ast f)'\|_q+\|f\|_q .
\end{equation}
In case (i) this implies
\begin{align*}
\|u\|_{q,1}+\|Au\|_{q}
&\underset{\mathclap{\eqref{eq:deriv_conv}}}{\leq} 2C_{0}(p_{0}(x)+p_{0}(Ax)+\|f\|_{p_{0}})+2\|A(T\ast f)+f\|_q+\|f\|_q\\
&\underset{\mathclap{\eqref{eq:A_conv}}}{\leq} 2C_{0}(p_{0}(x)+p_{0}(Ax)+\|f\|_{p_{0}})+2\|(T\ast Af)+f\|_q+\|f\|_q\\
&\leq 2C_{0}(p_{0}(x)+p_{0}(Ax)+\|f\|_{p_{0}})+2\|T\ast Af\|_q+3\|f\|_q.
\end{align*}
By \prettyref{prop:riemann_int} there are $p_1\in\Gamma_X$ and $C_1\geq 0$ such that $\|T\ast Af\|_q\leq rC_1\|Af\|_{p_1}$. 
Moreover, as $\Gamma_{X}$ is a fundamental system of seminorms, there are $p\in\Gamma_{X}$ and $C_{2}\geq 0$ 
such that $\max\{q,p_0,p_1\}\leq C_{2} p$. Hence we get
\begin{align*}
\|u\|_{q,1}+\|Au\|_{q}&
\leq 2C_{0}(p_{0}(x)+p_{0}(Ax)+\|f\|_{p_{0}})+2\|T\ast Af\|_q+3\|f\|_q\\
&\leq 2C_{0}C_2(p(x)+p(Ax)+\|f\|_{p})+2rC_1C_2\|Af\|_{p}+3C_2\|f\|_{p}\\
&\leq C(p(x)+p(Ax)+\|f\|_{p}+\|Af\|_{p})
\end{align*}
with $C\coloneqq \max\{2C_0C_2+3C_2,2rC_1C_2\}$, which proves the estimate in case (i). 
The estimate in case (ii) follows similarly from \eqref{eq:deriv_conv_inhom_deriv}, \eqref{eq:cont_dep}, 
\prettyref{prop:riemann_int} and the local equicontinuity of the semigroup.
\end{proof}

Let $r>0$ and $(T(t))_{t\geq 0}$ be a strongly continuous locally equicontinuous semigroup on 
a sequentially complete Hausdorff locally convex space $X$ with generator $A$. 
By \prettyref{prop:pre_cmax_reg_strict_sol} we see that given $f\in\mathrm{C}([0,r];X)$ the ACP \eqref{eq:ACP} 
has a strict solution for every $x\in D(A)$ if condition (b) of \prettyref{prop:pre_cmax_reg_strict_sol} is fulfilled. 
Now, we want to strictly solve the ACP \eqref{eq:ACP} for every $f$ from certain subspaces of $\mathrm{C}([0,r];X)$, 
namely subspaces of the form $B(\mathrm{C}([0,r];U))$ for some operator $B\in\mathcal{L}(U;X)$ on another 
Hausdorff locally convex space $U$. So we want to strictly solve the ACP 
\begin{equation}\label{eq:ACP_B}
u'(t)=Au(t)+Bf(t),\quad t\in[0,r],\quad u(0)=x.
\end{equation}
for every $x\in D(A)$ and $f\in\mathrm{C}([0,r];U)$. 
The ACP \eqref{eq:ACP_B} is called a \emph{control system}, $X$ the \emph{state space}, $U$ the 
\emph{input or control space}, $B$ the \emph{control operator}, 
$f$ the \emph{input or control function} and $u$ the \emph{state function} 
(see e.g.~\cite[p.~432--433]{kruse_seifert2022a} and \cite[p.~527]{weiss1989a}). 
This motivates the following definition of continuous maximal regularity, in short $\mathrm{C}$-maximal regularity.

\begin{defn}
Let $r>0$, $X$ be a sequentially complete Hausdorff locally convex space and $(T(t))_{t\geq 0}$ a strongly continuous 
locally equicontinuous semigroup on $X$ with generator $A$. 
Let $U$ be a Hausdorff locally convex space and $B\in\mathcal{L}(U;X)$. 
We say that $(T(t))_{t\geq 0}$ satisfies \emph{$\mathrm{C}$-maximal regularity} for $(B,r)$ 
if $(T\ast Bf)(t)\in D(A)$ for all $t\in [0,r]$ and $A(T\ast Bf)\in \mathrm{C}([0,r];X)$ 
for all $f\in \mathrm{C}([0,r];U)$. 
If $U=X$ and $B=\id$, then we just say that $(T(t))_{t\geq 0}$ satisfies 
\emph{$\mathrm{C}$-maximal regularity} for $r$ instead of $(\id,r)$. 
\end{defn}

In the case that $X$ is a Banach space, $U=X$ and $B=\id$, 
this definition reduces to the one given in Baillon's work 
\cite[Condition $(\star)$ in Th\'eor\`eme 1, p.~757]{baillon1980}, see also \cite[p.~47]{eberhardt1992}, 
\cite[Definition 17.2.40, p.~614]{hytonen2023} and \cite[Definition 1.1, p.~144]{JacoSchwWint2022}. 
In the literature, maximal regularity is often coined 
``with respect to the generator $A$'' rather than the semigroup generated by $A$. We deliberately chose for the 
(equivalent) wording in line with \cite{eberhardt1992} and \cite{JacoSchwWint2022} as it stresses that the semigroup 
is used in the definition (rather than only the generator).

\begin{cor}\label{cor:cmax_reg_strict_sol}
Let $r>0$, $X$ be a sequentially complete Hausdorff locally convex space and $(T(t))_{t\geq 0}$ a strongly continuous 
locally equicontinuous semigroup on $X$ with generator $A$ and $x\in D(A)$. 
Let $U$ be a Hausdorff locally convex space and $B\in\mathcal{L}(U;X)$. 
Then the following assertions are equivalent.
\begin{enumerate}
\item[(a)] The ACP \eqref{eq:ACP_B} has a strict solution for all $f\in \mathrm{C}([0,r];U)$.
\item[(b)] $(T(t))_{t\geq 0}$ satisfies $\mathrm{C}$-maximal regularity for $(B,r)$. 
\end{enumerate}
\end{cor}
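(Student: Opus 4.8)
The plan is to reduce the statement to \prettyref{prop:pre_cmax_reg_strict_sol} applied to the inhomogeneity $Bf$. The key observation is that for $f \in \mathrm{C}([0,r];U)$ and $B \in \mathcal{L}(U;X)$ we have $Bf \in \mathrm{C}([0,r];X)$, since $B$ is continuous and linear; moreover $(T \ast Bf)(t) = \int_{0}^{t}T(t-s)Bf(s)\,\d s$ is exactly the convolution appearing in the definition of $\mathrm{C}$-maximal regularity for $(B,r)$. So the ACP \eqref{eq:ACP_B} is literally the ACP \eqref{eq:ACP} with the inhomogeneity $f$ replaced by $Bf \in \mathrm{C}([0,r];X)$.

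First I would fix $x \in D(A)$ and spell out this reduction: by \prettyref{prop:pre_cmax_reg_strict_sol}, for a fixed $g \in \mathrm{C}([0,r];X)$ the ACP \eqref{eq:ACP} with inhomogeneity $g$ has a strict solution (for this $x$) if and only if $(T \ast g)(t) \in D(A)$ for all $t \in [0,r]$ and $A(T \ast g) \in \mathrm{C}([0,r];X)$. Applying this with $g = Bf$ for each $f \in \mathrm{C}([0,r];U)$ yields: the ACP \eqref{eq:ACP_B} has a strict solution for all $f \in \mathrm{C}([0,r];U)$ if and only if, for every $f \in \mathrm{C}([0,r];U)$, both $(T \ast Bf)(t) \in D(A)$ for all $t \in [0,r]$ and $A(T \ast Bf) \in \mathrm{C}([0,r];X)$. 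The latter is by definition precisely the assertion that $(T(t))_{t \geq 0}$ satisfies $\mathrm{C}$-maximal regularity for $(B,r)$, so (a) $\Leftrightarrow$ (b) follows.

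The only genuinely delicate point is making sure \prettyref{prop:pre_cmax_reg_strict_sol} is applicable, i.e.\ that its hypotheses are met: $X$ sequentially complete, $(T(t))_{t \geq 0}$ strongly continuous and locally equicontinuous with generator $(A,D(A))$, $Bf \in \mathrm{C}([0,r];X)$, and $x \in D(A)$ — all of which are granted by the hypotheses of the corollary together with the trivial fact that $B$ maps $\mathrm{C}([0,r];U)$ continuously into $\mathrm{C}([0,r];X)$. I expect no real obstacle here; the proof is a one-line invocation of the preceding proposition once the identification $(T \ast Bf) = (T \ast g)|_{g = Bf}$ is noted. If one wishes to be slightly more careful, one can remark that $s \mapsto T(t-s)Bf(s)$ is continuous and Riemann integrable by \prettyref{prop:riemann_int} (applied to the inhomogeneity $Bf$), so that $T \ast Bf$ is well-defined and continuous, but this is already subsumed in the cited proposition.
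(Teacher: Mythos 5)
Your reduction is exactly the intended one: the paper states this as an immediate corollary of \prettyref{prop:pre_cmax_reg_strict_sol} (with no written proof), obtained by applying that proposition with inhomogeneity $Bf\in\mathrm{C}([0,r];X)$ for each $f\in\mathrm{C}([0,r];U)$ and noting that the resulting condition, quantified over all $f$, is precisely the definition of $\mathrm{C}$-maximal regularity for $(B,r)$. Your proposal is correct and follows essentially the same route.
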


If the semigroup satisfies $\mathrm{C}$-maximal regularity for $(B,r)$ for some $r>0$, then it satisfies 
$\mathrm{C}$-maximal regularity for $(B,r)$ for any $r>0$ which we prove next.

\begin{prop}\label{prop:adm_cmax_for_some_r_for_all}
Let $X$ be a sequentially complete Hausdorff locally convex space and $(T(t))_{t\geq 0}$ a strongly continuous 
locally equicontinuous semigroup on $X$ with generator $A$. 
Let $U$ be a Hausdorff locally convex space and $B\in\mathcal{L}(U;X)$.
\begin{enumerate} 
\item[(a)] Let $r>0$. If $(T\ast Bf)(r)\in D(A)$ for all $f\in\mathrm{C}([0,r];U)$, 
then $(T\ast Bf)(t)\in D(A)$ for all $f\in\mathrm{C}([0,r];U)$ and $t\in [0,r]$.
\item[(b)] If $(T\ast Bf)(r)\in D(A)$ for all $f\in\mathrm{C}([0,r];U)$ for some $r>0$, 
then it also holds for all $r>0$.
\item[(c)] If $(T(t))_{t\geq 0}$ satisfies $\mathrm{C}$-maximal regularity for $(B,r)$ for some $r>0$, then 
it satisfies $\mathrm{C}$-maximal regularity for $(B,r)$ for all $r>0$.
\item[(d)] If $(T(t))_{t\geq 0}$ satisfies $\mathrm{C}$-maximal regularity for some $r>0$, then 
it satisfies $\mathrm{C}$-maximal regularity for $(B,r)$.
\end{enumerate}
\end{prop}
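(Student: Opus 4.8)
The plan is to establish (a) first --- it carries the one genuinely new idea --- and then bootstrap (b), (c), (d) from it, using throughout the semigroup law in convolution form $(T\ast g)(t+h)=T(h)(T\ast g)(t)+\int_{t}^{t+h}T(t+h-s)g(s)\,\d s$, translations and reflections of the integration variable in Riemann integrals, the facts that $\int_{0}^{t}T(s)y\,\d s\in D(A)$ for every $y\in X$ (as recalled in the proof of \prettyref{prop:mild_solution_solves_integrated_ACP}) and that $T(t)D(A)\subseteq D(A)$ with $AT(t)x=T(t)Ax$ there, together with the closedness of $A$. For (a), I would fix $f\in\mathrm{C}([0,r];U)$ and $t_{0}\in(0,r)$ (the cases $t_{0}=0$ and $t_{0}=r$ being trivial, resp.\ the hypothesis), split off the constant function equal to $f(0)$ --- whose contribution to $(T\ast Bf)(t_{0})$ is $\int_{0}^{t_{0}}T(s)Bf(0)\,\d s\in D(A)$ --- to reduce to $f(0)=0$, and then set $g\in\mathrm{C}([0,r];U)$ equal to $0$ on $[0,r-t_{0}]$ and to $f(s-(r-t_{0}))$ on $[r-t_{0},r]$ (continuous precisely because $f(0)=0$). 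The substitution $\sigma=s-(r-t_{0})$ then gives $(T\ast Bg)(r)=(T\ast Bf)(t_{0})$, so the latter lies in $D(A)$ by hypothesis.

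For (b), assume the property for some $r_{0}>0$. If $0<r\le r_{0}$, extend a given $f\in\mathrm{C}([0,r];U)$ to $\mathrm{C}([0,r_{0}];U)$ without changing $(T\ast Bf)$ on $[0,r]$ and apply (a) with $r_{0}$, which also yields $(T\ast Bf)(t)\in D(A)$ for all $t\le r$. If $r_{0}<r\le 2r_{0}$, split $(T\ast Bf)(r)=T(r_{0})(T\ast Bf)(r-r_{0})+\int_{r-r_{0}}^{r}T(r-s)Bf(s)\,\d s$: the first summand is in $D(A)$ since $r-r_{0}\le r_{0}$ and $T(r_{0})D(A)\subseteq D(A)$, and the second equals $(T\ast Bh)(r_{0})\in D(A)$ for $h\coloneqq f((r-r_{0})+\cdot)\in\mathrm{C}([0,r_{0}];U)$. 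Replacing $r_{0}$ by $2r_{0}$ and iterating covers all $r>0$.

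For (c), note that by (b) the membership $(T\ast Bf)(t)\in D(A)$ already holds on every interval, so only the continuity of $\varphi\coloneqq A(T\ast Bf)$ must be propagated from $[0,r_{0}]$ outward. I would prove by induction on $n$ that $A(T\ast Bf)\in\mathrm{C}([0,nr_{0}];X)$ for every $f\in\mathrm{C}([0,nr_{0}];U)$: for $t\in[nr_{0},(n+1)r_{0}]$, splitting the convolution at $a\coloneqq nr_{0}$ gives $\varphi(t)=T(t-a)\varphi(a)+A(T\ast Bg_{a})(t-a)$ with $g_{a}\coloneqq f(a+\cdot)\in\mathrm{C}([0,r_{0}];U)$, where distributing $A$ over the two summands is legitimate because both lie in $D(A)$ by (b) and $A$ commutes with $T(t-a)$ on $D(A)$; the first term is continuous in $t$ by strong continuity and the second by $\mathrm{C}$-maximal regularity for $(B,r_{0})$. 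Extending or restricting $f$ as in (b) then gives the assertion for all $r>0$. Finally, (d) is immediate: applying $\mathrm{C}$-maximal regularity for $(\id,r)$ to $Bf\in\mathrm{C}([0,r];X)$ for $f\in\mathrm{C}([0,r];U)$ gives $\mathrm{C}$-maximal regularity for $(B,r)$, hence for all $r>0$ by (c).

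The hard part will be the reflection/left-extension device in (a), together with the preliminary reduction to $f(0)=0$, which turns $(T\ast Bf)(t_{0})$ for $t_{0}<r$ into $(T\ast Bg)(r)$; in (c) the one delicate point is that splitting $A$ across the convolution decomposition is justified precisely because both summands already lie in $D(A)$ by (b) and $A$ is closed. The rest is Riemann-integral bookkeeping and the elementary properties of $(T(t))_{t\ge0}$ recorded earlier.
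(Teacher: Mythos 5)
Your proposal is correct and follows essentially the same route as the paper: the padding/translation device in (a) (your reduction to $f(0)=0$ with zero-padding is just a cosmetic variant of the paper's extension by the constant $f(0)$ combined with $\int_{t}^{r}T(s)Bf(0)\,\d s\in D(A)$), and the same splitting of the convolution at intermediate times via the semigroup law, invariance of $D(A)$ under $T(t)$ and $AT(t)=T(t)A$ on $D(A)$ for (b) and (c), with (d) trivial. Your induction on multiples of $r_{0}$ in (c) is merely a tidier phrasing of the paper's ``$r_{0}\leq 2r$ and repeat'' argument, and the appeal to closedness of $A$ in (c) is unnecessary (linearity of $A$ on $D(A)$ suffices) but harmless.
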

\begin{proof}
First, we remark that $Bf\in\mathrm{C}([0,r];X)$ for any $f\in\mathrm{C}([0,r];U)$ and $r>0$ 
since $B\in\mathcal{L}(U;X)$. Thus $(T\ast Bf)(t)\in X$ for every $t\in [0,r]$ by \prettyref{prop:riemann_int}.

(a) We use the idea of \cite[Chap.~III, 3.3 Corollary, p.~187]{engel_nagel2000} to prove part (a). 
Let $f\in\mathrm{C}([0,r];U)$. For $t\in [0,r]$ we define the function $f_{t}\colon[0,r]\to U$ by 
\[
f_{t}(s)\coloneqq
\begin{cases}
f(0)&,\; s\in [0,r-t],\\
f(s+t-r)&,\; s\in (r-t,r].
\end{cases}
\]
We observe that $f_{t}\in \mathrm{C}([0,r];U)$ and it is easily checked that
\[
 (T\ast Bf)(t)
=\int_{0}^{t}T(t-s)Bf(s)\d s
=(T\ast Bf_{t})(r)-\int_{t}^{r}T(s)Bf(0)\d s
\]
by a change of variables. Further, we have that 
\[
\int_{t}^{r}T(s)Bf(0)\d s=T(t)\int_{0}^{r-t}T(s)Bf(0)\d s\in D(A)
\]
by a change of variables and \cite[Corollary, p.~261]{komura1968}. 
Noting that $(T\ast Bf_{t})(r)\in D(A)$, we deduce our statement. 

(b) Let $r>0$ such that $(T\ast Bf)(r)\in D(A)$ for all $f\in\mathrm{C}([0,r];U)$. Let $r_{0}>0$.
First, we consider the case $r_{0}<r$. Let $f\in\mathrm{C}([0,r_{0}];U)$. We define $f_{1}\colon[0,r]\to U$ by 
\[
f_{1}(s)\coloneqq
\begin{cases}
f(s)&,\; s\in [0,r_{0}],\\
f(r_{0})&,\; s\in (r_{0},r].
\end{cases}
\]
Then $f_{1}\in \mathrm{C}([0,r];U)$ and $(T\ast Bf)(r_{0})=(T\ast Bf_{1})(r_{0})\in D(A)$ by part (a). 

Second, we consider the case $r_{0}>r$. Let $f\in\mathrm{C}([0,r_{0}];U)$. We define the function 
$f_{2}\colon[0,r]\to U$, $f_{2}(s)\coloneqq f(s+r_{0}-r)$, and note that $f_{2}\in\mathrm{C}([0,r];U)$. 
Moreover, we note that
\[
(T\ast Bf)(r_{0})=\int_{0}^{r_{0}-r}T(r_{0}-s)Bf(s)\d s+\int_{r_{0}-r}^{r_{0}}T(r_{0}-s)Bf(s)\d s.
\]
and
\[
 \int_{r_{0}-r}^{r_{0}}T(r_{0}-s)Bf(s)\d s
=\int_{0}^{r}T(r-s)Bf(r_{0}-r+s)\d s
=(T\ast Bf_{2})(r)\in D(A)
\]
by a change of variables as well as 
\[
 \int_{0}^{r_{0}-r}T(r_{0}-s)Bf(s)\d s
=T(r)\int_{0}^{r_{0}-r}T(r_{0}-r-s)Bf(s)\d s
=T(r)\bigl((T\ast Bf)(r_{0}-r)\bigr).
\]
If $r_{0}\leq 2r$, then $r_{0}-r\in [0,r]$ and $(T\ast Bf)(r_{0}-r)\in D(A)$ by part (a) since 
$f_{\mid [0,r]}\in\mathrm{C}([0,r];U)$, which implies $T(r)((T\ast Bf)(r_{0}-r))\in D(A)$ 
by \cite[Proposition 1.2 (1), p.~260]{komura1968}. 
Thus $(T\ast Bf)(r_{0})\in D(A)$ for any $r_{0}>0$ such that $r_{0}\leq 2r$. 
By repetition we obtain our statement.

(c) Let $r>0$ be such that $(T(t))_{t\geq 0}$ satisfies $\mathrm{C}$-maximal regularity for $(B,r)$. 
Let $r_{0}>0$. By our assumption and parts (a) and (b) we obtain that $(T\ast Bf)(t)\in D(A)$ for all 
$f\in\mathrm{C}([0,r_{0}];X)$ and $t\in [0,r_{0}]$. 

First, we consider the case $r_{0}<r$. Let $f\in\mathrm{C}([0,r_{0}];X)$ and 
define $f_{1}\in\mathrm{C}([0,r];X)$ as in part (b). Then $A(T\ast Bf)=A(T\ast Bf_{1})$ on $[0,r_{0}]$, implying 
that $A(T\ast Bf)\in\mathrm{C}([0,r_{0}];X)$ by the $\mathrm{C}$-maximal regularity for $(B,r)$.

Second, we consider the case $r_{0}>r$. Let $f\in\mathrm{C}([0,r_{0}];X)$. Then $A(T\ast Bf)$ is continuous on $[0,r]$ 
by the $\mathrm{C}$-maximal regularity for $(B,r)$. Let $t\in [r,r_{0}]$. 
Then we have
\begin{align*}
  \int_{r}^{t}T(t-s)Bf(s)\d s
&=(T\ast Bf)(t)-\int_{0}^{r}T(t-s)Bf(s)\d s\\
&=(T\ast Bf)(t)-T(t-r)\bigl((T\ast Bf)(r)\bigr)\in D(A)
\end{align*}
by \cite[Proposition 1.2 (1), p.~260]{komura1968} and 
\[
 A(T\ast Bf)(t)
=A\int_{0}^{r}T(t-s)Bf(s)\d s+A\int_{r}^{t}T(t-s)Bf(s)\d s.
\]
We note that 
\[
 A\int_{0}^{r}T(t-s)Bf(s)\d s
=T(t-r)A\int_{0}^{r}T(r-s)Bf(s)\d s
=T(t-r)A(T\ast B
f)(r)
\]
by \cite[Proposition 1.2 (1), p.~260]{komura1968}. If $r_{0}\leq 2r$, then $t-r\in [0,r]$ and 
the function $f_{3}\colon[0,r]\to X$, $f_{3}(s)\coloneqq f(s+r)$, is continuous. We obtain
\[
 A\int_{r}^{t}T(t-s)Bf(s)\d s
=A\int_{0}^{t-r}T(t-r-s)Bf(s+r)\d s
=A(T\ast Bf_{3})(t-r)
\]
by a change of variables and so 
\[
A(T\ast Bf)(t)=T(t-r)A(T\ast Bf)(r)+A(T\ast Bf_{3})(t-r).
\]
The right-hand side is continuous in the variable $t-r\in [0,r]$ by the strong continuity and 
the $\mathrm{C}$-maximal regularity for $(B,r)$ of the semigroup. Hence we obtain that the semigroup satisfies 
$\mathrm{C}$-maximal regularity for $(B,r_{0})$ for any $r_{0}>0$ such that $r_{0}\leq 2r$. 
By repetition we obtain our statement.

(d) This statement is obvious since $Bf\in\mathrm{C}([0,r];X)$ for every $f\in\mathrm{C}([0,r];U)$.
\end{proof}

As a consequence of \prettyref{cor:cmax_reg_strict_sol} and \prettyref{prop:adm_cmax_for_some_r_for_all} (c) 
we obtain the following statement.

\begin{cor}
Let $X$ be a sequentially complete Hausdorff locally convex space and $(T(t))_{t\geq 0}$ a strongly continuous 
locally equicontinuous semigroup on $X$ with generator $A$ and $x\in X$. 
Let $U$ be a Hausdorff locally convex space and $B\in\mathcal{L}(U;X)$.
Then the following assertions are equivalent.
\begin{enumerate}
\item[(a)] The ACP \eqref{eq:ACP_B} has a strict solution for all $f\in \mathrm{C}([0,r];U)$ for some $r>0$.
\item[(b)] The ACP \eqref{eq:ACP_B} has a strict solution for all $f\in \mathrm{C}([0,r];U)$ for all $r>0$.
\end{enumerate}
\end{cor}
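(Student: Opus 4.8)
The plan is to deduce the claimed equivalence directly from \prettyref{cor:cmax_reg_strict_sol} together with \prettyref{prop:adm_cmax_for_some_r_for_all}~(c). The implication (b)$\Rightarrow$(a) is trivial, so only (a)$\Rightarrow$(b) needs an argument, and that argument amounts to chaining the two quoted results.

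First I would note that the statement is phrased for arbitrary $x\in X$, whereas \prettyref{cor:cmax_reg_strict_sol} assumes $x\in D(A)$; so the initial step is to observe that under (a) one automatically has $x\in D(A)$. Indeed, if the ACP \eqref{eq:ACP_B} has a strict solution $u$ for every $f\in\mathrm{C}([0,r];U)$, then in particular for $f\equiv 0\in\mathrm{C}([0,r];U)$, and by \prettyref{defn:strict_solution} any strict solution satisfies $u(0)=x\in D(A)$. Conversely, if $x\notin D(A)$, then no strict solution of \eqref{eq:ACP_B} can exist for any $f$ (again since a strict solution has its initial value in $D(A)$), so both (a) and (b) fail and the equivalence holds vacuously. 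Hence we may and do assume $x\in D(A)$.

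Next, assuming (a) with some radius $r>0$, I would apply \prettyref{cor:cmax_reg_strict_sol} (for this $r$ and our $x\in D(A)$) to conclude that $(T(t))_{t\ge 0}$ satisfies $\mathrm{C}$-maximal regularity for $(B,r)$. By \prettyref{prop:adm_cmax_for_some_r_for_all}~(c), $\mathrm{C}$-maximal regularity for $(B,r)$ at one value of $r$ yields it for every $r'>0$. Fixing an arbitrary $r'>0$ and invoking \prettyref{cor:cmax_reg_strict_sol} once more --- now with radius $r'$ and the same $x\in D(A)$ --- gives that \eqref{eq:ACP_B} admits a strict solution for all $f\in\mathrm{C}([0,r'];U)$. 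Since $r'>0$ was arbitrary, this is precisely (b).

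I do not anticipate any genuine obstacle here. The only subtlety worth flagging is the passage from the hypothesis $x\in X$ in the corollary to the hypothesis $x\in D(A)$ required to apply \prettyref{cor:cmax_reg_strict_sol}; this is handled by the $f\equiv 0$ observation above. Everything else is a straightforward concatenation of \prettyref{cor:cmax_reg_strict_sol}, \prettyref{prop:adm_cmax_for_some_r_for_all}~(c), and \prettyref{cor:cmax_reg_strict_sol} again.
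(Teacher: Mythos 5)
Your proof is correct and is essentially the paper's argument: the paper derives this corollary precisely by combining \prettyref{cor:cmax_reg_strict_sol} with \prettyref{prop:adm_cmax_for_some_r_for_all}~(c), exactly as you chain them. Your additional remark reducing the hypothesis $x\in X$ to $x\in D(A)$ (via the fact that a strict solution forces $u(0)=x\in D(A)$, the case $x\notin D(A)$ being vacuous) is a sensible clarification of a point the paper leaves implicit.
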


\section{Families of bounded semivariation}
\label{sect:sg_bounded_semivar}

Let $r>0$, $X$ be a sequentially complete Hausdorff locally convex space and $(T(t))_{t\geq 0}$ a strongly continuous 
locally equicontinuous semigroup on $X$ with generator $A$. Let $U$ be a Hausdorff locally convex space 
and $B\in\mathcal{L}(U;X)$. If $(T(t))_{t\geq 0}$ satisfies $\mathrm{C}$-maximal regularity for $(B,r)$, 
then $(T\ast Bf)(r)\in D(A)$ for all $f\in\mathrm{C}([0,r];U)$. In this section we want to show that 
the converse of this statement is also true, at least for certain spaces $X$, $U$ and operators $B$. 
Clearly, the converse holds without any further restrictions on $X$, $U$ and $B$ if $A\in\mathcal{L}(X)$. 
To cover other cases, we introduce the notion of families of bounded semivariation.

\begin{defn}
Let $r>0$, $X$ and $U$ be Hausdorff locally convex spaces and $(S(t))_{t\geq 0}$ a family in $\mathcal{L}(U;X)$. 
We say that $(S(t))_{t\in [0,r]}$ is of \emph{bounded semivariation} if the map $[0,r]\ni t\mapsto S(t)\in 
\mathcal{L}(U;X)$ is of bounded semivariation. 
\end{defn} 

\begin{rem}\label{rem:bounded_semivar_for_some_r_all_r}
Let $X$ and $U$ be Hausdorff locally convex spaces, $B\in\mathcal{L}(U;X)$ and $(T(t))_{t\geq 0}$ a semigroup on $X$. 
\begin{enumerate}
\item[(a)] If $(T(t)B)_{t\in [0,r]}$ is of bounded semivariation for some $r>0$, then it is of bounded semivariation for all $r>0$. 
Indeed, let $d\coloneqq (d_{i})_{0\leq i\leq n}\in D[0,2r]$. Then there is some $1\leq j\leq n$ such that 
$d_{j-1}< r\leq d_{j}$. 
W.l.o.g.~$d_{j}=r$ (otherwise we set $\widetilde{d}_{i}\coloneqq d_{i}$ for $0\leq i\leq j-1$, 
$\widetilde{d}_{j}\coloneqq r$ and $\widetilde{d}_{i+1}\coloneqq d_{i}$ for $j\leq i\leq n$). 
Then $(d_{i})_{0\leq i\leq j}\in D[0,r]$ and $(d_{i}-r)_{j\leq i\leq n}\in D[0,r]$. 
Let $q_{0}\in\Gamma_{X}$. Then there are $q_{1}\in\Gamma_{X}$ and $C_{1}\geq 0$ such that 
$q_{0}(T(r)x)\leq C_{1}q_{1}(x)$ for all $x\in X$ because $T(r)\in\mathcal{L}(X)$. 
Moreover, as $\Gamma_{X}$ is a fundamental system of seminorms, there are $q_{2}\in\Gamma_{X}$ and $C_{2}\geq 0$ 
such that $\max\{q_{0},q_{1}\}\leq C_{2} q_{2}$. 
Since $(T(t)B)_{t\in [0,r]}$ is of bounded semivariation, 
there is $p\in\Gamma_{U}$ such that $SV_{q_{2},p}^{[0,r]}(T(\cdot)B)<\infty$. 
For $x_{i}\in U$ with $p(x_{i})\leq 1$ for all $1\leq i\leq n$ we have
\begin{flalign*}
&\hspace{0.37cm} q_{0}\Bigl(\sum_{i=1}^{n}\bigl(T(d_{i})-T(d_{i-1})\bigr)Bx_{i}\Bigr)\\
&\leq q_{0}\Bigl(\sum_{i=1}^{j}\bigl(T(d_{i})-T(d_{i-1})\bigr)Bx_{i}\Bigr)
 +q_{0}\Bigl(T(r)\sum_{i=j+1}^{n}\bigl(T(d_{i}-r)-T(d_{i-1}-r)\bigr)Bx_{i}\Bigr)\\
&\leq  C_{2}q_{2}\Bigl(\sum_{i=1}^{j}\bigl(T(d_{i})-T(d_{i-1})\bigr)Bx_{i}\Bigr)
 +C_{1}C_{2}q_{2}\Bigl(\sum_{i=j+1}^{n}\bigl(T(d_{i}-r)-T(d_{i-1}-r)\bigr)Bx_{i}\Bigr)\\
&\leq  (1+C_{1})C_{2}SV_{q_{2},p}^{[0,r]}(T(\cdot)B), 
\end{flalign*}
implying that $(T(t)B)_{t\in [0,2r]}$ is of bounded semivariation. By repetition 
and \prettyref{rem:bounded_semivariation_subinterval} we obtain our statement.
\item[(b)] If $(T(t))_{t\in [0,r]}$ is of bounded semivariation for some $r>0$, then the family $(T(t)B)_{t\in [0,r]}$ is also of bounded semivariation by \prettyref{rem:RS_int_subinterval_split} (c).
\end{enumerate}
\end{rem}

Our next statement is a generalisation of \cite[Lemma 3.1, p.~427]{travis1981}.

\begin{prop}\label{prop:gen_commute_r_int_rs_int}
Let $r>0$, $X$ be a sequentially complete Hausdorff locally convex space and $(T(t))_{t\geq 0}$ a strongly continuous 
locally equicontinuous semigroup on $X$ with generator $A$. 
Let $U$ be a Hausdorff locally convex space and $B\in\mathcal{L}(U;X)$. 
If $f\in\mathrm{C}([0,r];U)$ and $(T(t)B)_{t\in [0,r]}$ is of bounded semivariation, 
then it holds for every $t\in [0,r]$ that 
$(T\ast Bf)(t)\in D(A)$ and 
\[
A(T\ast Bf)(t)=A\int_{0}^{t}T(t-s)Bf(s)\d s=\int_{0}^{t}f(s)\d T(t-s)B.
\]
\end{prop}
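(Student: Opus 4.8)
The plan is to express $A(T\ast Bf)(t)$ as a limit of difference quotients involving the semigroup acting on the convolution, and then recognize these difference quotients as Riemann sums for the Riemann--Stieltjes integral $\int_{0}^{t}f(s)\,\d T(t-s)B$. First I would recall that by \prettyref{prop:riemann_int} the convolution $(T\ast Bf)(t)=\int_{0}^{t}T(t-s)Bf(s)\,\d s$ is well-defined, and that by \prettyref{prop:mild_solution_solves_integrated_ACP} (applied with $x=0$ and inhomogeneity $Bf$, noting $Bf\in\mathrm{C}([0,r];X)$) the map $w\mapsto\int_{0}^{w}(T\ast Bf)(s)\,\d s$ lands in $D(A)$, with $A$ of this integral computable. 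The key computational identity is that for a partition $d=(d_{i})_{0\leq i\leq |d|}$ of $[0,t]$,
\[
\frac{1}{h}\bigl(T(h)-\id\bigr)(T\ast Bf)(t) \approx \sum_{i=1}^{|d|}\bigl(T(t-d_{i-1})-T(t-d_{i})\bigr)Bf(c_{i})
\]
up to error terms, which would follow from splitting $(T\ast Bf)(t)=\sum_i\int_{d_{i-1}}^{d_i}T(t-s)Bf(s)\,\d s$ and using the semigroup property $T(h)T(t-s)=T(t-s+h)$ together with $\frac1h\int_{d_{i-1}}^{d_i}(T(t-s+h)-T(t-s))Bf(s)\,\d s$ telescoping in the limit.

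More concretely, I would proceed as follows. Fix $t\in[0,r]$. Since $(T(t)B)_{t\in[0,r]}$ is of bounded semivariation and $f\in\mathrm{C}([0,r];U)$, the Riemann--Stieltjes integral $y\coloneqq\int_{0}^{t}f(s)\,\d(T(t-\cdot)B)$ exists in $X$ by \prettyref{prop:RS_int_bounde_semivar} (after the reflection $s\mapsto t-s$, which preserves bounded semivariation — this is where \prettyref{rem:bounded_semivariation_subinterval} and the observation that reversing the parameter does not affect the semivariation seminorms are used). The main step is to show $\frac1h(T(h)-\id)(T\ast Bf)(t)\to y$ as $h\to 0^{+}$. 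Writing $(T\ast Bf)(t)=\int_{0}^{t}T(t-s)Bf(s)\,\d s$ and using $T(h)T(t-s)Bf(s)=T(t-s+h)Bf(s)$, a change of variables gives
\[
T(h)(T\ast Bf)(t)=\int_{h}^{t+h}T(t-\sigma+h)Bf(\sigma-h)\,\d\sigma
\]
(I will arrange the bookkeeping so that the difference $\frac1h(T(h)(T\ast Bf)(t)-(T\ast Bf)(t))$ becomes, after discretizing the outer integral over a fine partition and using uniform continuity of $f$ and strong continuity of the semigroup, arbitrarily close to a Riemann--Stieltjes sum $\sum_i(T(t-d_{i-1})-T(t-d_i))Bf(c_i)$); the error control here relies on the equicontinuity estimates already invoked in the proof of \prettyref{prop:riemann_int} and on \prettyref{prop:RS_int_bounde_semivar}. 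Once $\lim_{h\to0^{+}}\frac1h(T(h)-\id)(T\ast Bf)(t)=y$ is established, the closedness of $A$ (\cite[Proposition 1.4, p.~262]{komura1968}) — or rather directly the definition of $D(A)$ — yields $(T\ast Bf)(t)\in D(A)$ and $A(T\ast Bf)(t)=y$, which is exactly the claimed identity. The middle equality $A\int_0^t T(t-s)Bf(s)\,\d s = \int_0^t f(s)\,\d T(t-s)B$ is then just a restatement.

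An alternative route, which I expect to be cleaner and which I would actually pursue, is to avoid the two-sided limit and instead use \prettyref{prop:mild_solution_solves_integrated_ACP}: with $x=0$ and inhomogeneity $Bf$, one gets $\int_0^t(T\ast Bf)(s)\,\d s\in D(A)$ and a formula for its image under $A$; differentiating in $t$ and using that $T\ast Bf$ is continuous, one would need to know $A(T\ast Bf)(t)$ exists, so this still requires the semivariation input to identify the pointwise value — hence I would combine \prettyref{rem:RS_int_subinterval_split}, the Fubini-type argument from \prettyref{prop:mild_solution_solves_integrated_ACP}, and the continuity of $I_{\alpha}$ from \prettyref{prop:RS_int_bounde_semivar} to push $A$ inside. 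The main obstacle, in either approach, is the interchange of the (closed, unbounded) operator $A$ with the Riemann integral defining the convolution: one cannot simply apply \prettyref{rem:riemann_int_A_commute} because $T(t-s)Bf(s)$ need not lie in $D(A)$ pointwise. The bounded-semivariation hypothesis is precisely the device that circumvents this by reorganizing $A(T\ast Bf)(t)$ as a Stieltjes integral against $T(\cdot)B$ rather than as $A$ applied to a Bochner/Riemann integral, and getting this reorganization rigorous — controlling the discretization errors uniformly using the equicontinuity of the semigroup and the finiteness of $SV_{q,p}^{[0,t]}(T(\cdot)B)$ — is the technical heart of the argument.
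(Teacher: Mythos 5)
Your overall strategy --- show that the right-hand difference quotient $\tfrac1h(T(h)-\id)(T\ast Bf)(t)$ converges to the Stieltjes integral and then invoke the definition of the generator --- is genuinely different from the paper's argument and could in principle be completed, but as written the decisive step is missing rather than merely deferred. First, the one explicit computation you offer for it is incorrect: substituting $u=\sigma-h$ in $\int_{h}^{t+h}T(t-\sigma+h)Bf(\sigma-h)\,\d\sigma$ gives back $\int_{0}^{t}T(t-u)Bf(u)\,\d u=(T\ast Bf)(t)$ itself, not $T(h)(T\ast Bf)(t)$; the correct identity is $T(h)(T\ast Bf)(t)=\int_{0}^{t}T(t+h-s)Bf(s)\,\d s$. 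Second, and more importantly, the error control you appeal to (equicontinuity as in \prettyref{prop:riemann_int}) cannot work on its own: after discretising $[0,t]$ into subintervals of length comparable to $h$, you must estimate sums of roughly $t/h$ terms of the form $(T(t-s)-T(t-d_{i}))B\bigl(f(\cdot)-f(d_{i})\bigr)$, and a bound of order $C\varepsilon$ per term blows up as $h\to 0\rlim$. What rescues the argument is precisely the bounded semivariation: sums of increments of $T(\cdot)B$ over pairwise disjoint subintervals applied to vectors of $p$-seminorm at most $\varepsilon$ are bounded by $\varepsilon\,SV_{q,p}^{[0,r]}(T(\cdot)B)$ (cf.\ \eqref{eq:RS_int_continuous}), and this, combined with a telescoping/Abel summation in which the boundary terms must be seen to cancel, is the technical heart that your sketch does not carry out. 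Your ``alternative route'' via \prettyref{prop:mild_solution_solves_integrated_ACP} is, as you yourself note, circular and is not developed either. A further point of care: fix the orientation of the integrator $s\mapsto T(t-s)B$ by the sanity check $f\equiv x$, $B=\id$, where the answer must telescope to $T(t)x-x$; this determines which sign of Stieltjes sum your difference quotients should approximate.

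The paper avoids the difference quotient altogether, and the comparison is instructive. It approximates the integrand, not the quotient: with $d_{i}^{n}=it/n$ and $g_{n}(s)\coloneqq T(t-s)Bf(d_{i}^{n})$ on $(d_{i-1}^{n},d_{i}^{n}]$, each $\int_{0}^{t}g_{n}(s)\,\d s=\sum_{i}\int_{d_{i-1}^{n}}^{d_{i}^{n}}T(t-s)Bf(d_{i}^{n})\,\d s$ lies in $D(A)$ automatically because integrals of orbits do (\cite[Corollary, p.~261]{komura1968}), and $A$ applied to it is exactly a Riemann--Stieltjes sum, so no error terms arise in the $A$-images at all. Uniform continuity of $Bf$ and local equicontinuity give $\int_{0}^{t}g_{n}(s)\,\d s\to(T\ast Bf)(t)$, the Stieltjes sums converge by \prettyref{prop:RS_int_bounde_semivar}, and the closedness of $A$ (\cite[Proposition 1.4, p.~262]{komura1968}) yields both $(T\ast Bf)(t)\in D(A)$ and the stated formula. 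If you wish to keep your route, the semivariation bound on sums of increments described above is the missing ingredient needed to make the discretisation of the difference quotient rigorous.
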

\begin{proof}
Let $t\in [0,r]$. We note that the map $[0,t]\ni s\mapsto T(t-s)B\in \mathcal{L}(X)$ is of bounded semivariation since 
$(T(t)B)_{t\in [0,r]}$ is of bounded semivariation. Therefore the Riemann--Stieltjes integral 
$\int_{0}^{t}f(s)\d T(t-s)B$ exists by \prettyref{prop:RS_int_bounde_semivar}. Further, the Riemann integral 
$(T\ast Bf)(t)=\int_{0}^{t}T(t-s)Bf(s)\d s$ exists in $X$ by \prettyref{prop:riemann_int}. 
For $n\in\N$ we define a partition 
$d^{n}\coloneqq(d_{i}^{n})_{0\leq i\leq n}\in D[0,t]$ by $d_{i}^{n}\coloneqq it/n$ for all $0\leq i\leq n$ 
and the function
$g_{n}\colon [0,t]\to U$ given by $g_{n}(s)\coloneqq T(t-s)Bf(d_{i}^{n})$ for all $d_{i-1}^{n}<s\leq d_{i}^{n}$ where 
$1\leq i\leq n$ and $g_{n}(0)\coloneqq T(t)Bf(0)$. We observe that $g_{n}$ is Riemann integrable on $[0,t]$ and 
\begin{equation}\label{eq:gen_commute_r_int_rs_int}
\int_{0}^{t}g_{n}(s)\d s=\sum_{i=1}^{n}\,\int_{d_{i-1}^{n}}^{d_{i}^{n}}T(t-s)Bf(d_{i}^{n})\d s\in D(A)
\end{equation}
by \cite[Corollary, p.~261]{komura1968}. Let $q\in\Gamma_{X}$. We have the estimate
\[
q\Bigl(\int_{0}^{t}g_{n}(s)-T(t-s)Bf(s)\d s\Bigr)\leq t\sup_{s\in [0,t]}q(g_{n}(s)-T(t-s)Bf(s)).
\]
Let $\varepsilon >0$. Due to the local equicontinuity of the semigroup there are $p\in\Gamma_{X}$ and $C\geq 0$ such that 
$q(T(t-s)x)\leq C p(x)$ for all $s\in [0,t]$ and $x\in X$. Since $Bf$ is uniformly continuous on $[0,t]$, there is $\delta>0$ such that for all $t_{1},t_{2}\in[0,t]$ with $|t_{1}-t_{2}|<\delta$ it holds $p(Bf(t_{1})-Bf(t_{2}))<\varepsilon$. Then we get for all $n\in\N$ 
such that $\Delta d^{n}=t/n<\delta$ the estimate
\begin{align*}
  \sup_{s\in[0,t]}q(g_{n}(s)-T(t-s)Bf(s))
&=\sup_{i\in\N}\sup_{d_{i-1}^{n}<s\leq d_{i}^{n}}q(T(t-s)B(f(d_{i}^{n})-f(s)))\\
&\leq C \sup_{i\in\N}\sup_{d_{i-1}^{n}<s\leq d_{i}^{n}}p(Bf(d_{i}^{n})-Bf(s))
 \leq C\varepsilon .
\end{align*}
This implies the convergence of $(\int_{0}^{t}g_{n}(s)\d s)_{n\in\N}$ to $\int_{0}^{t}T(t-s)Bf(s)\d s$ in $X$. 
Further, we deduce from \eqref{eq:gen_commute_r_int_rs_int} and \cite[Corollary, p.~261]{komura1968} that
\[
A\int_{0}^{t}g_{n}(s)\d s=\sum_{i=1}^{n}(T(t-d_{i}^{n})-T(t-d_{i-1}^{n}))Bf(d_{i}^{n})\underset{n\to\infty}{\to}
\int_{0}^{t}f(s)\d T(t-s)B.
\]
The closedness of $A$ by \cite[Proposition 1.4, p.~262]{komura1968} implies $\int_{0}^{t}T(t-s)Bf(s)\d s\in D(A)$ and 
$A\int_{0}^{t}T(t-s)Bf(s)\d s=\int_{0}^{t}f(s)\d T(t-s)B$.
\end{proof}

Next, we transfer \cite[Lemma 3.2, p.~427]{travis1981} to our setting. 
Let $X$ be a Hausdorff locally convex space and $(T(t))_{t\geq 0}$ a strongly continuous semigroup on $X$. 
We say that a topological linear subspace $U$ of $X$ is an \emph{invariant subspace} w.r.t.~$(T(t))_{t\geq 0}$ 
if $T(t)x\in U$ for all $t\geq 0$ and $x\in U$. Let $U$ be an invariant subspace of $X$ w.r.t.~$(T(t))_{t\geq 0}$ 
and $B\in\mathcal{L}(U;X)$. We say that $B$ is \emph{commuting} with $(T(t))_{t\geq 0}$ if $T(t)Bx=BT(t)x$ for all 
$t\geq 0$ and $x\in U$. Clearly, if $U=X$, then $T(t_{0})$ commutes with $(T(t))_{t\geq 0}$ for any fixed $t_{0}\geq 0$. 
Further, we call $X$ a \emph{generalised Schwartz space} if every bounded subset of $X$ is already precompact (see 
\cite[5.2.50 Definition, p.~93]{kruse2023}). In particular, Schwartz spaces and semi-Montel spaces are generalised 
Schwartz spaces but infinite-dimensional Banach spaces (with the norm-topology) are not.

\begin{prop}\label{prop:continuity_rs_int}
Let $r>0$, $X$ be a sequentially complete Hausdorff locally convex space and $(T(t))_{t\geq 0}$ a strongly continuous 
locally equicontinuous semigroup on $X$. Let $U$ be a Hausdorff locally convex space and $B\in\mathcal{L}(U;X)$.
If $f\in\mathrm{C}([0,r];U)$ and $(T(t)B)_{t\in [0,r]}$ is of bounded semivariation, then 
the map 
\[
I_{T,B,f}\colon [0,r]\to X,\;I_{T,B,f}(t)\coloneqq\int_{0}^{t}f(s)\d T(t-s)B, 
\]
is right-continuous. If in addition 
\begin{enumerate}
\item[(i)] $U$ is an invariant subspace of $X$ and $B$ commuting with $(T(t))_{t\geq 0}$, or
\item[(ii)] $X$ is a generalised Schwartz space,
\end{enumerate}
then $I_{T,B,f}$ is continuous. 
\end{prop}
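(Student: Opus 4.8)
I want to show right-continuity of $I_{T,B,f}$ in general, and continuity under either extra hypothesis. Throughout I fix $f\in\mathrm{C}([0,r];U)$ and use that $(T(t)B)_{t\in[0,r]}$, and hence (by \prettyref{rem:bounded_semivariation_subinterval}) each restriction $(T(t)B)_{t\in[0,t_0]}$, is of bounded semivariation; by \prettyref{prop:RS_int_bounde_semivar} all the Riemann--Stieltjes integrals below exist, and by \prettyref{prop:gen_commute_r_int_rs_int} we have $I_{T,B,f}(t)=A(T\ast Bf)(t)$ for every $t$. The natural strategy is to split $I_{T,B,f}(t+h)-I_{T,B,f}(t)$ (and similarly for $h<0$) into a ``tail'' integral over $[t,t+h]$ and a ``main'' integral over $[0,t]$ on which the kernel changes from $T(t-s)B$ to $T(t+h-s)B$.

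\textbf{Right-continuity.} Fix $t\in[0,r)$ and $h\in(0,r-t)$. Using the additivity from \prettyref{rem:RS_int_subinterval_split}(a) applied to the bounded-semivariation function $s\mapsto T(t+h-s)B$ on $[0,t+h]$, write
\[
I_{T,B,f}(t+h)-I_{T,B,f}(t)=\int_{0}^{t}f(s)\,\d\bigl(T(t+h-s)-T(t-s)\bigr)B+\int_{t}^{t+h}f(s)\,\d T(t+h-s)B.
\]
For the second term, \prettyref{cor:RS_int_to_zero} (with $a=t$, $\alpha(\sigma)=T(t+h-\sigma)B$, which is of bounded semivariation and strongly continuous, and $\varphi_{t+h}(s)=s$; or directly after a change of variable putting the moving endpoint at $0$) shows it tends to $0$ as $h\to 0^{+}$. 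For the first term, substitute $s\mapsto t-s$ so it becomes $\int_{0}^{t}f(t-\sigma)\,\d\bigl(T(h+\sigma)-T(\sigma)\bigr)B$; I want to say the bounded-semivariation ``mass'' of $\sigma\mapsto(T(h+\sigma)-T(\sigma))B$ on $[0,t]$, i.e.\ $SV_{q,p}(\,(T(h+\cdot)-T(\cdot))B\,)$, tends to $0$ as $h\to 0^{+}$, which via \eqref{eq:RS_int_continuous} kills the term. Here lies the main obstacle: bounded semivariation is \emph{not} continuous for the strong operator topology, so $SV$ of the difference need not go to $0$ in general. The fix---which is exactly why hypotheses (i)/(ii) reappear for full continuity but not for right-continuity---is to bound this differently: write $(T(h+\sigma)-T(\sigma))B x = T(h)(T(\sigma)Bx) - T(\sigma)Bx$ is the wrong grouping; instead estimate the first integral using the identity $T(t+h-s)-T(t-s) = (T(h)-\id)T(t-s)$ and push $T(h)-\id$ \emph{outside} the Riemann--Stieltjes integral via \prettyref{rem:RS_int_subinterval_split}(b), giving $(T(h)-\id)\int_{0}^{t}f(s)\,\d T(t-s)B=(T(h)-\id)I_{T,B,f}(t)$, which tends to $0$ by strong continuity of the semigroup. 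Thus right-continuity follows with no extra assumption.

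\textbf{Left-continuity.} Now fix $t\in(0,r]$ and $h\in(-t,0)$; set $\tau=t+h<t$. Splitting on $[0,\tau]\cup[\tau,t]$,
\[
I_{T,B,f}(t)-I_{T,B,f}(\tau)=\int_{0}^{\tau}f(s)\,\d\bigl(T(t-s)-T(\tau-s)\bigr)B+\int_{\tau}^{t}f(s)\,\d T(t-s)B.
\]
The second term again vanishes as $h\to 0^{-}$ by \prettyref{cor:RS_int_to_zero} after reflecting the interval. For the first term the same trick applies: $T(t-s)-T(\tau-s)=(T(t-\tau)-\id)T(\tau-s)=(T(-h)-\id)T(\tau-s)$, and pulling $T(-h)-\id$ out by \prettyref{rem:RS_int_subinterval_split}(b) gives $(T(-h)-\id)\,I_{T,B,f}(\tau)$. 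So left-continuity at $t$ would follow \emph{provided} $I_{T,B,f}(\tau)$ stays in a set on which $(T(-h)-\id)$ acts uniformly small---but $\tau=t+h$ is itself moving with $h$, so we need the family $\{I_{T,B,f}(\tau):\tau\in[t/2,t]\}$, or really its closure, to be such that $(T(\varepsilon)-\id)\to 0$ uniformly on it as $\varepsilon\to0^{+}$. By the equicontinuity of $(T(w)-\id)_{w\in[0,1]}$ (local equicontinuity, as in the proof of \prettyref{prop:riemann_int}) together with \cite[8.5.1 Theorem (b), p.~156]{jarchow1981}, this holds as soon as that set is precompact. This is precisely where (i) and (ii) enter: under (ii), $X$ is a generalised Schwartz space, so it suffices to show $\{I_{T,B,f}(\tau):\tau\in[t/2,t]\}$ is \emph{bounded}, which follows from \eqref{eq:RS_int_continuous} and $\sup_\tau SV_{q,p}^{[0,\tau]}(T(\cdot)B)\le SV_{q,p}^{[0,r]}(T(\cdot)B)<\infty$; then bounded $\Rightarrow$ precompact. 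Under (i), $B$ commutes with the semigroup and $U$ is invariant, so by \prettyref{rem:RS_int_subinterval_split}(c) we have $I_{T,B,f}(\tau)=\int_0^\tau Bf(s)\,\d T(\tau-s) = B\int_0^\tau f(s)\,\d T(\tau-s)\cdots$; more usefully, commutation lets us rewrite $(T(-h)-\id)I_{T,B,f}(\tau)=\int_0^\tau (T(-h)-\id)f(s)\,\d T(\tau-s)B = \int_0^\tau f(s)\,\d(T(\tau-h-s)-T(\tau-s))B$ and absorb it, or directly: $(T(-h)-\id)I_{T,B,f}(\tau)=A(T\ast B((T(-h)-\id)f))(\tau)=I_{T,B,(T(-h)-\id)f}(\tau)$, whose $q$-seminorm is $\le SV_{q,p}^{[0,r]}(T(\cdot)B)\sup_{s}p((T(-h)-\id)Bf(s))$, and the latter $\to 0$ by equicontinuity of $(T(w)-\id)_{w\in[0,1]}$ and compactness of $Bf([0,r])$, exactly as in \prettyref{prop:riemann_int}. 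Either way we conclude left-continuity, hence continuity, under (i) or (ii). The one genuinely delicate point is the interchange ``pull $T(h)-\id$ out of the Riemann--Stieltjes integral'': this is legitimate by \prettyref{rem:RS_int_subinterval_split}(b) since $T(h)-\id\in\mathcal{L}(X)$ and $X$ is sequentially complete, with $(T(h)-\id)T(t-s)B = T(t+h-s)B - T(t-s)B$ giving the matching integrand.
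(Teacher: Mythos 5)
Your proposal is correct and follows essentially the same route as the paper: the same tail-plus-main decomposition with the tail killed by \prettyref{cor:RS_int_to_zero}, the factor $T(h)-\id$ pulled out via \prettyref{rem:RS_int_subinterval_split}\,(b) for right-continuity, commutation used to shift $T(h)-\id$ onto $f$ in case (i), and boundedness plus precompactness of the set of integrals in the generalised Schwartz case (ii). Two cosmetic points: in the case (i) estimate the correct quantity is $\sup_{s}p\bigl((T(-h)-\id)f(s)\bigr)$ with $p\in\Gamma_{U}$ (not $p\bigl((T(-h)-\id)Bf(s)\bigr)$), and one should note, as the paper does, that $\Gamma_{U}$ may be taken as the restriction of $\Gamma_{X}$ so that the equicontinuity argument in $X$ applies.
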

\begin{proof}
We show that the map $I_{T,B,f}$ is right-continuous. 
Let $t\in [0,r)$ and $h\in (0,r-t]$. Then we have 
by \prettyref{rem:RS_int_subinterval_split} (a) and (b) that
\begin{flalign*}
&\hspace{0.37cm}\int_{0}^{t+h}f(s)\d T(t+h-s)B-\int_{0}^{t}f(s)\d T(t-s)B\\
&=T(h)\int_{0}^{t}f(s)\d T(t-s)B+\int_{t}^{t+h}f(s)\d T(t+h-s)B-\int_{0}^{t}f(s)\d T(t-s)B\\
&=(T(h)-\id)\int_{0}^{t}f(s)\d T(t-s)B+\int_{0}^{h}f(t+h-s)\d T(t-s)B
\end{flalign*}
The first summand converges to $0$ in $X$ as $h\to 0\rlim$ by the strong continuity of the semigroup 
and the second summand converges to $0$ in $X$ as $h\to 0\rlim$ by \prettyref{cor:RS_int_to_zero} with 
$\varphi_{h}(s)\coloneqq t+h-s$ for $s\in [0,h]$ and $c\coloneqq r-t$. 
Thus $I_{T,B,f}$ is right-continuous on $[0,r)$. 

(i) Now, suppose that $U$ is an invariant subspace of $X$ and $B$ commuting with $(T(t))_{t\geq 0}$. 
We show that $I_{T,B,f}$ is also left-continuous in this case 
with coinciding left- and right-limits. 
Let $t\in (0,r]$ and $h\in (0,t]$. Since $U$ is an invariant subspace of $X$ and $B$ commuting with the semigroup, 
we observe that 
\[
T(t-s)Bx=T(t-h-s)T(h)Bx=T(t-h-s)BT(h)x
\]
for all $s\in [0,t-h]$ and $x\in U$ and $T(h)f\in\mathrm{C}([0,r];U)$. 
Then we obtain by \prettyref{rem:RS_int_subinterval_split} (a) and (c) and our observation above that
\begin{flalign}\label{eq:continuity_rs_int}
&\hspace{0.37cm}\int_{0}^{t}f(s)\d T(t-s)B-\int_{0}^{t-h}f(s)\d T(t-h-s)B\nonumber\\
&=\int_{0}^{t-h}T(h)f(s)\d T(t-h-s)B+\int_{t-h}^{t}f(s)\d T(t-s)B-\int_{0}^{t-h}f(s)\d T(t-h-s)B\nonumber\\
&=\int_{0}^{t-h}(T(h)-\id)f(s)\d T(t-h-s)B+\int_{0}^{h}f(t-s)\d T(s)B.
\end{flalign}
The second summand converges to $0$ in $X$ as $h\to 0\rlim$ by \prettyref{cor:RS_int_to_zero} 
with $\varphi_{h}(s)\coloneqq t-s$ for $s\in [0,h]$ and $c\coloneqq t$. 
The first summand converges to $0$ in $X$ as $h\to 0\rlim$, too. Indeed, let $q\in\Gamma_{X}$ where $\Gamma_{X}$ 
is a fundamental system of seminorms for the topology of $X$. Then the restricted system 
$\Gamma_{U}\coloneqq (\Gamma_{X})_{\mid U}$ is a fundamental system of seminorms for the topology of $U$.
Since $(T(t)B)_{t\in [0,r]}$ is of bounded semivariation, the map $[0,t-h]\ni s\mapsto T(t-h-s)B\in\mathcal{L}(U;X)$ 
is also of bounded semivariation. 
Hence there is $p\in\Gamma_{U}$ such that it holds by \eqref{eq:RS_int_continuous} that
\begin{flalign*}
&\hspace{0.37cm} q\Bigl(\int_{0}^{t-h}(T(h)-\id)f(s)\d T(t-h-s)B\Bigr)\\
&\leq SV_{q,p}^{[0,t-h]}(T(t-h-\cdot)B)\sup_{s\in[0,t-h]}p((T(h)-\id)f(s))\\
&\leq SV_{q,p}^{[0,r]}(T(\cdot)B)\sup_{s\in[0,r]}p((T(h)-\id)f(s)).
\end{flalign*}
The local equicontinuity of the semigroup implies that the family $(T(w)-\id)_{w\in [0,t]}$ 
in $\mathcal{L}(X)$ is equicontinuous. Thus $\lim_{h\to 0\rlim}\sup_{s\in[0,r]}p((T(h)-\id)f(s))=0$ 
by \cite[8.5.1 Theorem (b), p.~156]{jarchow1981}, the compactness of $f([0,r])$, the strong continuity 
of the semigroup and since $p=\widetilde{p}_{\mid U}$ for some $\widetilde{p}\in\Gamma_{X}$. 
We conclude that the first summand of \eqref{eq:continuity_rs_int} converges to $0$ in $X$ as $h\to 0\rlim$, 
yielding the left-continuity of $I_{T,B,f}$ on $(0,r]$.

(ii) Suppose that $X$ is a generalised Schwartz space. Instead of \eqref{eq:continuity_rs_int} we have 
by \prettyref{rem:RS_int_subinterval_split} (a) and (b) for all $t\in (0,r]$ and $h\in (0,t]$ that
\begin{flalign}\label{eq:continuity_rs_int_1}
&\hspace{0.37cm}\int_{0}^{t}f(s)\d T(t-s)B-\int_{0}^{t-h}f(s)\d T(t-h-s)B\nonumber\\
&=(T(h)-\id)\int_{0}^{t-h}f(s)\d T(t-h-s)B+\int_{0}^{h}f(t-s)\d T(s)B.
\end{flalign}
The second summand is the same as in \eqref{eq:continuity_rs_int} and converges to $0$ in $X$ as $h\to 0\rlim$. 
Let us turn to the first summand. Let $q\in\Gamma_{X}$. 
Since $(T(t)B)_{t\in [0,r]}$ is of bounded semivariation, there is $p\in\Gamma_{U}$ 
such that
\[
    q\Bigl(\int_{0}^{t-h}f(s)\d T(t-h-s)B\Bigr)
\leq SV_{q,p}^{[0,r]}(T(\cdot)B)\sup_{s\in[0,r]}p(f(s)).
\]
This implies that the set 
\[
M\coloneqq \Big\{\int_{0}^{t-h}f(s)\d T(t-h-s)B\;|\; t\in (0,r],\,h\in (0,t]\Bigr\}
\]
is bounded in $X$. Since $X$ is a generalised Schwartz space, the set $M$ is also precompact in $X$. 
As in part (i) it follows that $\lim_{h\to 0\rlim}\sup_{x\in M}q((T(h)-\id)x)=0$ 
by \cite[8.5.1 Theorem (b), p.~156]{jarchow1981}, the precompactness of $M$, the local equicontinuity and 
the strong continuity of the semigroup. Hence the first summand of \eqref{eq:continuity_rs_int_1} 
converges to $0$ in $X$ as $h\to 0\rlim$, implying the left-continuity of $I_{T,B,f}$ on $(0,r]$.
\end{proof}

\prettyref{prop:continuity_rs_int} (i) generalises \cite[Lemma 3.2, p.~427]{travis1981} where $X=U$ is a Banach space 
and $B=\id$. Looking at the proof of the addendum in case (ii), the equation \prettyref{eq:continuity_rs_int_1} 
still holds without the assumption that $X$ is a generalised Schwartz space and its second summand converges to $0$ 
as $h\to 0\rlim$. But we do not know how to control the first summand without this assumption.

\begin{prob}\label{prob:continuity_rs_int}
Let $r>0$, $X$ be a sequentially complete Hausdorff locally convex space and $(T(t))_{t\geq 0}$ a strongly continuous 
locally equicontinuous semigroup on $X$. Let $U$ be a Hausdorff locally convex space, 
$B\in\mathcal{L}(U;X)$, $f\in\mathrm{C}([0,r];U)$ and $(T(t)B)_{t\in [0,r]}$ of bounded semivariation. 
Is $I_{T,B,f}$ continuous without the restrictions (i) and (ii) of \prettyref{prop:continuity_rs_int}?
\end{prob}

\begin{cor}\label{cor:orbit_in_domain_for_pos_time}
Let $r>0$, $X$ be a sequentially complete Hausdorff locally convex space and $(T(t))_{t\geq 0}$ a strongly continuous 
locally equicontinuous semigroup on $X$ with generator $A$. 
If $(T(t))_{t\in [0,r]}$ is of bounded semivariation, then $T(t)X\subset D(A)$ for all $t>0$ 
and the map $[0,\infty)\ni t\mapsto AtT(t)x\in X$ is continuous for all $x\in X$. 
\end{cor}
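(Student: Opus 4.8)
The plan is to apply \prettyref{prop:gen_commute_r_int_rs_int} and \prettyref{prop:continuity_rs_int} with $U=X$, $B=\id$ and the inhomogeneity $f(s)\coloneqq T(s)x$. First I would fix $x\in X$ and an arbitrary $R>0$. By \prettyref{rem:bounded_semivar_for_some_r_all_r} (a), applied with $B=\id$, the hypothesis that $(T(t))_{t\in[0,r]}$ is of bounded semivariation upgrades to $(T(t))_{t\in[0,R]}$ being of bounded semivariation. The function $f\colon[0,R]\to X$, $f(s)\coloneqq T(s)x$, lies in $\mathrm{C}([0,R];X)$ by strong continuity, and the semigroup law gives $T(t-s)f(s)=T(t-s)T(s)x=T(t)x$ for all $0\le s\le t$, whence
\[
(T\ast f)(t)=\int_{0}^{t}T(t-s)f(s)\d s=\int_{0}^{t}T(t)x\d s=tT(t)x,\qquad t\in[0,R].
\]

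Next, \prettyref{prop:gen_commute_r_int_rs_int} with $B=\id$ yields $tT(t)x=(T\ast f)(t)\in D(A)$ for every $t\in[0,R]$; since $D(A)$ is a linear subspace, dividing by $t$ shows $T(t)x\in D(A)$ for every $t\in(0,R]$, and as $R>0$ was arbitrary this gives $T(t)X\subset D(A)$ for all $t>0$. The same proposition moreover identifies
\[
A\bigl(tT(t)x\bigr)=A(T\ast f)(t)=\int_{0}^{t}f(s)\d T(t-s)=I_{T,\id,f}(t),\qquad t\in[0,R].
\]
Since $U=X$ is trivially an invariant subspace w.r.t.~$(T(t))_{t\geq 0}$ and $B=\id$ trivially commutes with the semigroup, case (i) of \prettyref{prop:continuity_rs_int} applies and $I_{T,\id,f}$ is continuous on $[0,R]$. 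Hence $t\mapsto AtT(t)x$ is continuous on $[0,R]$, and as $R>0$ was arbitrary it is continuous on $[0,\infty)$ (at $t=0$ the common value is $I_{T,\id,f}(0)=0=A(0\cdot T(0)x)$).

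The argument is essentially bookkeeping once the right inhomogeneity is chosen; the one genuine idea is the choice $f(s)=T(s)x$, which collapses the convolution to $tT(t)x$ via the semigroup identity. The only points I would treat with a little care are the passage from the interval $[0,r]$ to an arbitrary interval $[0,R]$ -- which is precisely what \prettyref{rem:bounded_semivar_for_some_r_all_r} (a) provides -- and the observation that the degenerate endpoint $t=0$, where $tT(t)x=0\in D(A)$, causes no trouble.
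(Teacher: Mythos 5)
Your proof is correct and follows essentially the same route as the paper: choosing $f(s)=T(s)x$ so that $(T\ast f)(t)=tT(t)x$, invoking \prettyref{prop:gen_commute_r_int_rs_int} for the domain statement and the identification with $I_{T,\id,f}$, using case (i) of \prettyref{prop:continuity_rs_int} for continuity, and extending beyond $[0,r]$ via \prettyref{rem:bounded_semivar_for_some_r_all_r} (a). The only (immaterial) difference is that you apply the remark at the outset for an arbitrary $[0,R]$, whereas the paper argues on $[0,r]$ first and extends at the end.
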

\begin{proof}
Let $U\coloneqq X$ and $B\coloneqq \id$. For $x\in X$ we define the map $f\colon [0,r]\to X$, $f(t)\coloneqq T(t)x$. Then $f\in\mathrm{C}([0,r];X)$ since the semigroup 
is strongly continuous, and 
\[
tT(t)x=\int_{0}^{t}T(t-s)f(s)\d s=(T\ast f)(t)\in D(A)
\]
for all $t\in [0,r]$ by \prettyref{prop:gen_commute_r_int_rs_int}. Hence $T(t)x\in D(A)$ for all $t\in (0,r]$ and $x\in X$.  
Due to \prettyref{prop:gen_commute_r_int_rs_int} and \prettyref{prop:continuity_rs_int} the map 
$I_{T,\id,f}\colon [0,r]\to X$, $I_{T,\id,f}(t)=AtT(t)x$, is continuous. 
We deduce our statement from \prettyref{rem:bounded_semivar_for_some_r_all_r} (a). 
\end{proof}

Now, we are ready to introduce the spaces we mentioned in the beginning of this section. 
Let $r>0$, $X$ be a sequentially complete Hausdorff locally convex space and $(T(t))_{t\geq 0}$ a strongly continuous 
locally equicontinuous semigroup on $X$ with generator $A$. 
Let $U$ be a Hausdorff locally convex space and $B\in\mathcal{L}(U;X)$.
The linear map 
\[
\Psi_{r}^{B}\colon \mathrm{C}([0,r];U)\to X,\;\Psi_{r}^{B}(f)\coloneqq (T\ast Bf)(r)=\int_{0}^{r}T(r-s)Bf(s)\d s,
\]
is continuous by the estimate in \prettyref{prop:riemann_int} and since $B\in\mathcal{L}(U;X)$. 
Suppose that $(T\ast Bf)(r)\in D(A)$ for every $f\in\mathrm{C}([0,r];U)$. 
Since $\Psi_{r}^{B}(f)\in D(A)$ for all $f\in\mathrm{C}([0,r];U)$, the map 
$A\Psi_{r}^{B}\colon\mathrm{C}([0,r];U)\to X$ 
is well-defined and linear. Further, the linear map $A\Psi_{r}^{B}$ is closed as $A$ is closed by 
\cite[Proposition 1.4, p.~262]{komura1968} and $\Psi_{r}^{B}$ is continuous.

\begin{defn}
Let $r>0$, $X$ be a sequentially complete Hausdorff locally convex space, $(T(t))_{t\geq 0}$ a strongly continuous 
locally equicontinuous semigroup on $X$ with generator $A$. 
Let $U$ be a Hausdorff locally convex space and $B\in\mathcal{L}(U;X)$ 
such that $\Psi_{r}^{B}(f)=(T\ast Bf)(r)\in D(A)$ for every $f\in\mathrm{C}([0,r];U)$. 
We call $X$ a \emph{$3\mathrm{C}_{U,A,r}^{B}$-space} if the \textbf{closed} linear map $A\Psi_{r}^{B}$ 
from the space of \textbf{continuous} functions $\mathrm{C}([0,r];U)$ to $X$ is already \textbf{continuous}. 
If $U=X$ and $B=\id$, then we just say that $X$ is a \emph{$3\mathrm{C}_{A,r}$-space} instead of 
$3\mathrm{C}_{X,A,r}^{\id}$-space.
\end{defn}

\begin{rem}\label{rem:3CA_cont_gen}
Let $X$ be a sequentially complete Hausdorff locally convex space and $(T(t))_{t\geq 0}$ be a strongly continuous 
locally equicontinuous semigroup on $X$ with generator $A\in\mathcal{L}(X)$. 
Then $\Psi_{r}^{B}(f)\in X=D(A)$ for every $f\in\mathrm{C}([0,r];U)$ and any $r>0$, Hausdorff locally convex space 
$U$ and $B\in\mathcal{L}(U;X)$. Further, 
$X$ is a $3\mathrm{C}_{U,A,r}^{B}$-space for any $r>0$, Hausdorff locally convex space 
$U$ and $B\in\mathcal{L}(U;X)$ since $A\in\mathcal{L}(X)$.
\end{rem}

Let $r_{1},r_{2}>0$ and $U$ be a Hausdorff locally convex space. 
Then the map $\mathrm{C}([0,r_{1}];U)\ni f\mapsto f(\tfrac{r_{1}}{r_{2}}\,\cdot)\in \mathrm{C}([0,r_{2}];U)$ is 
a topological isomorphism. 

\begin{defn}
Let $X$ and $U$ be Hausdorff locally convex spaces. We call $X$ a \emph{$3\mathrm{C}_{U}$-space} if for some 
(equivalently all) $r>0$ any closed linear map $C\colon \mathrm{C}([0,r];U)\to X$ is already continuous. 
If $U=X$, then we just say that $X$ is a \emph{$3\mathrm{C}$-space} instead of $3\mathrm{C}_{X}$-space.
\end{defn}

\begin{rem}\label{3C_implies_3CA}
Let $X$ and $U$ be Hausdorff locally convex spaces and $B\in\mathcal{L}(U;X)$.
If $X$ is a sequentially complete $3\mathrm{C}_{U}$-space, then $X$ is a $3\mathrm{C}_{U,A,r}^{B}$-space 
for any $r>0$ and any strongly continuous locally equicontinuous semigroup $(T(t))_{t\geq 0}$ on $X$ with 
generator $A$ such that $\Psi_{r}^{B}(f)=(T\ast Bf)(r)\in D(A)$ for every $f\in\mathrm{C}([0,r];U)$. 
\end{rem}

A list of complete $3\mathrm{C}_{U}$-spaces and $3\mathrm{C}$-spaces can be found in
 \prettyref{prop:C_spaces} and \prettyref{cor:list_C_spaces}, respectively.
In particular, every Fr\'echet space $X$ is a $3\mathrm{C}_{U}$-space for any Fr\'echet space $U$.
Let us come to our main theorem of this section, which we prove by adapting the proof of 
\cite[Proposition 3.1, p.~428]{travis1981}.

\begin{thm}\label{thm:Cmax_reg_equiv_Cadm}
Let $r>0$, $X$ be a sequentially complete Hausdorff locally convex space and $(T(t))_{t\geq 0}$ a strongly continuous 
locally equicontinuous semigroup on $X$ with generator $A$. 
Let $U$ be a Hausdorff locally convex space and $B\in\mathcal{L}(U;X)$. 
Consider the following assertions.
\begin{enumerate}
\item[(a)] $(T(t))_{t\geq 0}$ satisfies $\mathrm{C}$-maximal regularity for $(B,r)$. 
\item[(b)] $(T\ast Bf)(t)\in D(A)$ for all $t\in [0,r]$ and $A(T\ast Bf)$ is right-continuous on $[0,r]$
for all $f\in \mathrm{C}([0,r];U)$. 
\item[(c)] $(T\ast Bf)(r)\in D(A)$ for all $f\in\mathrm{C}([0,r];U)$.
\item[(d)] $(T(t)B)_{t\in [0,r]}$ is of bounded semivariation.
\end{enumerate}
Then we have $(a)\Rightarrow (b)$ and $(d)\Rightarrow(b)\Rightarrow (c)$. 
If $X$ is a $3\mathrm{C}_{U,A,r}^{B}$-space, then $(c)\Rightarrow(d)$. 
If 
\begin{enumerate}
\item[(i)] $U$ is an invariant subspace of $X$ and $B$ commuting with $(T(t))_{t\geq 0}$, or
\item[(ii)] $X$ is a generalised Schwartz space,
\end{enumerate} 
then $(d)\Rightarrow (a)$.
\end{thm}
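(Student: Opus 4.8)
The plan is to establish the implications in the following order: $(a)\Rightarrow(b)$ is immediate from the definition of $\mathrm{C}$-maximal regularity, since right-continuity is weaker than continuity. The implication $(b)\Rightarrow(c)$ is trivial (specialize $t=r$). For $(d)\Rightarrow(b)$, I would invoke \prettyref{prop:gen_commute_r_int_rs_int}: if $(T(t)B)_{t\in[0,r]}$ is of bounded semivariation, then for every $f\in\mathrm{C}([0,r];U)$ and every $t\in[0,r]$ we have $(T\ast Bf)(t)\in D(A)$ and $A(T\ast Bf)(t)=\int_0^t f(s)\,\d T(t-s)B = I_{T,B,f}(t)$; then \prettyref{prop:continuity_rs_int} gives that $I_{T,B,f}$ is right-continuous on $[0,r]$, which is exactly $(b)$.

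For the implication $(c)\Rightarrow(d)$ under the $3\mathrm{C}_{U,A,r}^{B}$-space hypothesis, I would argue as follows. Assumption $(c)$ guarantees that $\Psi_r^B(f)=(T\ast Bf)(r)\in D(A)$ for all $f\in\mathrm{C}([0,r];U)$, so the closed linear map $A\Psi_r^B\colon\mathrm{C}([0,r];U)\to X$ is well-defined (as noted in the text before the definition of $3\mathrm{C}_{U,A,r}^{B}$-space), and the $3\mathrm{C}$-hypothesis upgrades it to a \emph{continuous} map. Hence for every $q\in\Gamma_X$ there are $p\in\Gamma_U$ and $C\geq 0$ with $q\bigl(A(T\ast Bf)(r)\bigr)\leq C\|f\|_p$ for all $f\in\mathrm{C}([0,r];U)$. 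To extract bounded semivariation from this estimate, fix a partition $d=(d_i)_{0\leq i\leq n}\in D[0,r]$ and elements $x_i\in U$ with $p(x_i)\leq 1$; I would build a suitable piecewise-constant-type $f$ (approximated by continuous functions, as in the proof of \prettyref{prop:gen_commute_r_int_rs_int}) so that the Riemann--Stieltjes sum $\sum_{i=1}^{n}\bigl(T(r-d_{n-i+1})-T(r-d_{n-i})\bigr)Bx_i$ — equivalently, after reindexing, $\sum_{i=1}^{n}\bigl(T(d_i)-T(d_{i-1})\bigr)Bx_i$ — appears as $A(T\ast Bf)(r)=\int_0^r f(s)\,\d T(r-s)B$ up to an error controlled by $\|f\|_p$; taking the supremum over partitions and over admissible $(x_i)$ then yields $SV_{q,p}^{[0,r]}(T(\cdot)B)\leq C<\infty$, which is $(d)$ since $q$ was arbitrary.

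Finally, for $(d)\Rightarrow(a)$: assuming $(d)$, \prettyref{prop:gen_commute_r_int_rs_int} already gives $(T\ast Bf)(t)\in D(A)$ for all $t\in[0,r]$ with $A(T\ast Bf)=I_{T,B,f}$, so it only remains to show $I_{T,B,f}\in\mathrm{C}([0,r];X)$. Under hypothesis (i) or (ii), this is precisely the content of \prettyref{prop:continuity_rs_int}, giving full (not just right-) continuity. Hence $A(T\ast Bf)\in\mathrm{C}([0,r];X)$ for all $f\in\mathrm{C}([0,r];U)$, which is $\mathrm{C}$-maximal regularity for $(B,r)$. The main obstacle is the $(c)\Rightarrow(d)$ step: one must carefully turn the continuity estimate for $A\Psi_r^B$ into a semivariation bound by choosing test functions that realize arbitrary partition sums, handling the fact that these natural test functions are only piecewise constant (hence not in $\mathrm{C}([0,r];U)$) by a limiting argument along continuous approximants, exactly in the spirit of the $g_n$-construction in \prettyref{prop:gen_commute_r_int_rs_int}; the remaining implications are essentially assembled from the already-proved \prettyref{prop:gen_commute_r_int_rs_int} and \prettyref{prop:continuity_rs_int}.
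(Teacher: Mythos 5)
Your proposal is correct and follows essentially the same route as the paper: (a)$\Rightarrow$(b)$\Rightarrow$(c) are immediate, (d)$\Rightarrow$(b) and (d)$\Rightarrow$(a) are assembled from \prettyref{prop:gen_commute_r_int_rs_int} and \prettyref{prop:continuity_rs_int}, and (c)$\Rightarrow$(d) uses the continuity of the closed map $A\Psi_{r}^{B}$ granted by the $3\mathrm{C}_{U,A,r}^{B}$-hypothesis together with continuous (piecewise-linear) approximants of partition step functions and a limiting argument, exactly as in the paper's adaptation of Travis' proof. Only note that in the step (c)$\Rightarrow$(d) the identity $A(T\ast Bf)(r)=\int_{0}^{r}f(s)\,\d T(r-s)B$ is not yet available (it presupposes (d)), so one must, as you in fact indicate, compute $A\Psi_{r}^{B}$ on the approximants directly via the decomposition into pieces handled by \cite[Corollary, p.~261]{komura1968} and let the interpolation parameter tend to $0$.
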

\begin{proof}
The implications (a)$\Rightarrow$(b) and (b)$\Rightarrow$(c) are obvious. 
Further, the implication (d)$\Rightarrow$(b) is a consequence 
of \prettyref{prop:gen_commute_r_int_rs_int} and \prettyref{prop:continuity_rs_int}. 
The implication (d)$\Rightarrow$(a) also follows
from \prettyref{prop:gen_commute_r_int_rs_int} and \prettyref{prop:continuity_rs_int} if 
condition (i) or (ii) is assumed.

Let us turn to the implication (c)$\Rightarrow$(d) and suppose that $X$ is a $3C_{U,A,r}^{B}$-space. 
Then the closed linear map $A\Psi_{r}^{B}$ is continuous by the definition of a $3C_{U,A,r}^{B}$-space. 
For $n\in\N$ let $d\coloneqq (d_{i})_{0\leq i\leq n}\in D[0,r]$ and $\varepsilon >0$ such that 
$\varepsilon<\min\{d_{i}-d_{i-1}\;|\;1\leq i\leq n\}$. 
Let $q\in\Gamma_{X}$. Due to the continuity of $A\Psi_{r}^{B}$ there are $p\in\Gamma_{U}$ and $C\geq 0$ such that 
$q(A\Psi_{r}^{B}(f))\leq C\sup_{s\in[0,r]}p(f(s))$ for all $f\in\mathrm{C}([0,r];U)$. 
For a finite sequence $(x_{i})_{1\leq i\leq n+1}$ in $U$ such that $p(x_{i})\leq 1$ for all $1\leq i\leq n+1$ 
we define the continuous function $f_{d,\varepsilon}\colon [0,r]\to U$ given for $1\leq i\leq n$ by 
\[
f_{d,\varepsilon}(s)\coloneqq
\begin{cases}
x_{i}&,\;s\in [d_{i-1},d_{i}-\varepsilon),\\
x_{i+1}+(x_{i+1}-x_{i})(s-d_{i})/\varepsilon &,\; s\in [d_{i}-\varepsilon, d_{i}] .
\end{cases}
\]
Due to Bauer's maximum principle (see e.g.~\cite[Korollar, p.~392]{bauer1958}) we have 
\[
\sup_{s\in [d_{i}-\varepsilon, d_{i}]}p(f_{d,\varepsilon}(s))=\max\{p(x_{i}),p(x_{i+1})\}
\]
for all $1\leq i\leq n$ and thus 
\[
\sup_{s\in[0,r]}p(f_{d,\varepsilon}(s))=\max\{p(x_{i})\;|\;1\leq i\leq n+1\}\leq 1, 
\]
implying 
\begin{equation}\label{eq:c_admiss_max_reg}
 q(A\Psi_{r}^{B}(f_{d,\varepsilon}))
\leq C\sup_{s\in[0,r]}p(f_{d,\varepsilon}(s))
\leq C.
\end{equation}
Using \cite[Corollary, p.~261]{komura1968} and \prettyref{rem:RS_int_subinterval_split} (a), 
it follows by the same calculation 
as in the proof of \cite[Proposition 3.1, p.~428]{travis1981} that 
\begin{align*}
 A\Psi_{r}^{B}(f_{d,\varepsilon})
=&-\sum_{i=1}^{n}(T(r-d_{i})-T(r-d_{i-1}))Bx_{i}
  +\sum_{i=1}^{n}\frac{1}{\varepsilon}\int_{d_{i}-\varepsilon}^{d_{i}}T(r-s)B(x_{i+1}-x_{i})\d s \\
&-\sum_{i=1}^{n}T(r-d_{i})B(x_{i+1}-x_{i}).
\end{align*}
This implies that 
\begin{flalign*}
&\hspace{0.37cm}q\Bigl(\sum_{i=1}^{n}(T(r-d_{i})-T(r-d_{i-1}))Bx_{i}\Bigr)\\
&\leq q(A\Psi_{r}^{B}(f_{d,\varepsilon}))+\sum_{i=1}^{n}q\Bigl(\frac{1}{\varepsilon}\int_{d_{i}-\varepsilon}^{d_{i}}T(r-s)B(x_{i+1}-x_{i})\d s-T(r-d_{i})B(x_{i+1}-x_{i})\Bigr)\\
&\underset{\eqref{eq:c_admiss_max_reg}}{\leq} C+\sum_{i=1}^{n}q\Bigl(\frac{1}{\varepsilon}\int_{d_{i}-\varepsilon}^{d_{i}}T(r-s)B(x_{i+1}-x_{i})\d s-T(r-d_{i})B(x_{i+1}-x_{i})\Bigr).
\end{flalign*}
The estimate
\begin{flalign*}
&\hspace{0.37cm}q\Bigl(\frac{1}{\varepsilon}\int_{d_{i}-\varepsilon}^{d_{i}}T(r-s)B(x_{i+1}-x_{i})\d s-T(r-d_{i})B(x_{i+1}-x_{i})\Bigr)\\
&=\frac{1}{\varepsilon}q\Bigl(\int_{d_{i}-\varepsilon}^{d_{i}}(T(r-s)-T(r-d_{i}))B(x_{i+1}-x_{i})\d s\Bigr)\\
&\leq \sup_{s\in [d_{i}-\varepsilon, d_{i}]}q((T(r-s)-T(r-d_{i}))B(x_{i+1}-x_{i}))
\end{flalign*}
in combination with the strong continuity of the semigroup yields by taking the limit as $\varepsilon\to 0\rlim$ that 
\[
q\Bigl(\sum_{i=1}^{n}(T(r-d_{i})-T(r-d_{i-1}))Bx_{i}\Bigr)\leq C.
\]
We conclude that $SV_{q,p}(T(\cdot)B)=SV_{q,p}(T(r-\cdot)B)\leq C$, meaning that $(T(t)B)_{t\in [0,r]}$ is of bounded semivariation.
\end{proof}

\prettyref{thm:Cmax_reg_equiv_Cadm} has the following corollary, which generalises 
\cite[Proposition 3.1, p.~428]{travis1981} where $X=U$ is a Banach space and $B=\id$.

\begin{cor}\label{cor:strict_sol_equiv_bounded_semivar}
Let $r>0$, $X$ be a sequentially complete Hausdorff locally convex space, $(T(t))_{t\geq 0}$ a strongly continuous 
locally equicontinuous semigroup on $X$ with generator $A$ and $x\in D(A)$. 
Let $U$ be a Hausdorff locally convex space and $B\in\mathcal{L}(U;X)$. 
Consider the following assertions.
\begin{enumerate}
\item[(a)] The ACP \eqref{eq:ACP_B} has a strict solution for all $f\in\mathrm{C}([0,r];U)$.
\item[(b)] $(T\ast Bf)(t)\in D(A)$ for all $t\in [0,r]$ and $A(T\ast Bf)$ is right-continuous on $[0,r]$
for all $f\in \mathrm{C}([0,r];U)$. 
\item[(c)] $(T\ast Bf)(r)\in D(A)$ for all $f\in\mathrm{C}([0,r];U)$.
\item[(d)] $(T(t)B)_{t\in [0,r]}$ is of bounded semivariation.
\end{enumerate}
Then we have $(a)\Rightarrow (b)$ and $(d)\Rightarrow(b)\Rightarrow (c)$. 
If $X$ is a $3\mathrm{C}_{U,A,r}^{B}$-space, then $(c)\Rightarrow(d)$. 
If 
\begin{enumerate}
\item[(i)] $U$ is an invariant subspace of $X$ and $B$ commuting with $(T(t))_{t\geq 0}$, or
\item[(ii)] $X$ is a generalised Schwartz space,
\end{enumerate} 
then $(d)\Rightarrow (a)$.
\end{cor}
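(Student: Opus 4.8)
The plan is to reduce everything to \prettyref{thm:Cmax_reg_equiv_Cadm}. First I would observe that the three assertions (b), (c), (d) stated here are \emph{verbatim} the assertions (b), (c), (d) of \prettyref{thm:Cmax_reg_equiv_Cadm}. It therefore only remains to connect the present assertion (a) --- namely that the ACP \eqref{eq:ACP_B} has a strict solution for every $f\in\mathrm{C}([0,r];U)$ --- to assertion (a) of that theorem, namely that $(T(t))_{t\geq 0}$ satisfies $\mathrm{C}$-maximal regularity for $(B,r)$. But this is exactly the content of \prettyref{cor:cmax_reg_strict_sol}, whose hypotheses (sequential completeness of $X$, strong continuity and local equicontinuity of the semigroup, $B\in\mathcal{L}(U;X)$, and $x\in D(A)$) are all part of the hypotheses imposed in the present statement. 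Thus (a) here is equivalent to assertion (a) of \prettyref{thm:Cmax_reg_equiv_Cadm}.

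Once this identification is in place, every implication claimed in the corollary is simply the corresponding implication of \prettyref{thm:Cmax_reg_equiv_Cadm}, read through the equivalence ``(a) $\Leftrightarrow$ $\mathrm{C}$-maximal regularity for $(B,r)$''. Concretely: the chain $(d)\Rightarrow(b)\Rightarrow(c)$ is literally unchanged; $(a)\Rightarrow(b)$ here is ``$\mathrm{C}$-maximal regularity $\Rightarrow$ (b)'' there; if $X$ is a $3\mathrm{C}_{U,A,r}^{B}$-space, then $(c)\Rightarrow(d)$ there gives $(c)\Rightarrow(d)$ here; and under hypothesis (i) or (ii), the implication ``$(d)\Rightarrow$ $\mathrm{C}$-maximal regularity'' there yields $(d)\Rightarrow(a)$ here. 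So the proof consists of the single sentence invoking \prettyref{cor:cmax_reg_strict_sol} together with a pointer to \prettyref{thm:Cmax_reg_equiv_Cadm}.

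I do not expect any genuine obstacle: this is pure bookkeeping. The only point that deserves to be spelled out is that the equivalence of \prettyref{cor:cmax_reg_strict_sol} is what is being used, which is also the reason the hypothesis $x\in D(A)$ appears here even though it is absent from \prettyref{thm:Cmax_reg_equiv_Cadm} --- the notion of $\mathrm{C}$-maximal regularity for $(B,r)$ is independent of the initial datum, whereas ``having a strict solution'' is a statement about the ACP \eqref{eq:ACP_B} with a fixed $x$, and \prettyref{cor:cmax_reg_strict_sol} is precisely the bridge between the two.
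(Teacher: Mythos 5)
Your proposal is correct and is essentially the paper's own proof: the corollary is obtained by combining \prettyref{cor:cmax_reg_strict_sol} (which identifies assertion (a) with $\mathrm{C}$-maximal regularity for $(B,r)$, using $x\in D(A)$) with the implications of \prettyref{thm:Cmax_reg_equiv_Cadm}. Nothing further is needed.
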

\begin{proof}
The implications (a)$\Rightarrow$(b) and (d)$\Rightarrow$(b)$\Rightarrow$(c) follow from 
\prettyref{cor:cmax_reg_strict_sol} and \prettyref{thm:Cmax_reg_equiv_Cadm}. 
The same is true for the implication (d)$\Rightarrow$(a) if condition (i) or (ii) is fulfilled.
The implication (c)$\Rightarrow$(d) follows from \prettyref{thm:Cmax_reg_equiv_Cadm} if $X$ is a 
$3\mathrm{C}_{U,A,r}^{B}$-space.
\end{proof}

\begin{exa}\label{ex:sg_bounded_semivar}
(a) Let $X$ be a sequentially complete Hausdorff locally convex space which is barrelled or a strong Mackey space, 
and $(T(t))_{t\geq 0}$ a strongly continuous semigroup on $X$ with generator $A\in\mathcal{L}(X)$. 
Let $U$ be a Hausdorff locally convex space and $B\in\mathcal{L}(U;X)$. 
Then $(T(t))_{t\geq 0}$ is locally equicontinuous since $X$ is barrelled or a strong Mackey space, 
and $X$ is a $3\mathrm{C}_{U,A,r}^{B}$-space for every $r>0$ by \prettyref{rem:3CA_cont_gen}.
Hence $(T(t))_{t\in [0,r]}$ is of bounded semivariation for every $r>0$ 
by case (i) of the equivalence (c)$\Leftrightarrow$(d) of \prettyref{thm:Cmax_reg_equiv_Cadm} 
since $D(A)=X$ and $X$ is a $3\mathrm{C}_{A,r}$-space. 
Due to \prettyref{rem:bounded_semivar_for_some_r_all_r} (b) this implies that 
$(T(t)B)_{t\in [0,r]}$ is also of bounded semivariation for every $r>0$. 

(b) Let $c_{0}\coloneqq \{x\in\K^{\N}\;|\;\lim_{n\to\infty}x_{n}=0\}$ be the space of null sequences 
equipped with the supremum norm $\|\cdot\|_{\infty}$ and $(T_{0}(t))_{t\geq 0}$ the multiplication semigroup 
on $c_0$ given by 
\[
T_{0}(t)x\coloneqq(\e^{-nt}x_{n})_{n\in\N},\quad x\in c_{0},\, t\geq 0.
\]
Then $(T_{0}(t))_{t\geq 0}$ is strongly continuous and quasi-equicontinuous w.r.t.~the topology induced by 
$\|\cdot\|_{\infty}$, $(T_{0}(t))_{t\in [0,r]}$ is of bounded semivariation for every $r>0$ and 
$D(A_{0})\neq c_{0}$ holds for its generator $A_{0}$ by \cite[Example, p.~429]{travis1981} 
(see also \cite[p.~48]{eberhardt1992}).

(c) Let $\ell^{\infty}\coloneqq\{x\in\K^{\N}\;|\;\|x\|_{\infty}\coloneqq\sup_{n\in\N}|x_{n}|<\infty\}=\mathrm{C}_{\operatorname{b}}(\N)$, $m\in\R^{\N}$ such that $\sup_{n\in\N}m_{n}<0$ and 
$(T(t))_{t\geq 0}$ be the multiplication semigroup on $\ell^{\infty}$ given by 
\[
T(t)x\coloneqq(\e^{m_{n}t}x_{n})_{n\in\N},\quad x\in\ell^{\infty},\, t\geq 0.
\]
Then $(T(t))_{t\geq 0}$ is strongly continuous and quasi-equicontinuous w.r.t.~$\beta_{0}=\mu(\ell^{\infty},\ell^{1})$ 
by \cite[Theorem 4.1, p.~19]{kruse_schwenninger2022} 
where $\beta_{0}$ is the substrict topology (see the comments above 
\prettyref{cor:weakly_seq_complete_dual_substrict}) and  $\ell^{1}$ denotes the space of absolutely summable sequences 
in $\K^{\N}$ with dual pairing 
\[
\langle y,x \rangle\coloneqq\sum_{n=1}^{\infty}y_{n}x_{n},\quad y\in\ell^{1},\,x\in\ell^{\infty}.
\]
Its generator is the multiplication operator 
$
A\colon D(A)\to \ell^{\infty},\;Ax =(m_{n}x_{n})_{n\in\N},
$
with domain $D(A)=\{x\in\ell^{\infty}\;|\;(m_{n}x_{n})_{n\in\N}\in\ell^{\infty}\}$ by \cite[p.~353--354]{budde2019} (cf.~\cite[Example 4.9, p.~259--260]{kruse_schwenninger2024}). 
Since $\N$ equipped with the metric induced by the absolute value is a discrete space, we obtain that 
$(\ell^{\infty},\beta_{0})$ is $B_{r}$-complete by \cite[4.6 Corollary, p.~367]{collins1968} and thus a $3\mathrm{C}$-space by \prettyref{cor:Cb_substrict_Cspace} (i). Further, $(\ell^{\infty},\beta_{0})$ is 
a generalised Schwartz space by \cite[Theorem 4.1, p.~365]{collins1968}. 
Let $r>0$, $f\in\mathrm{C}([0,r];(\ell^{\infty},\beta_{0}))$ and denote by $(e_{n})_{n\in\N}$ the canonical 
Schauder basis of $\ell^{1}=(\ell^{\infty},\beta_{0})'$.
Then we have $(T\ast f)(r)=\int_{0}^{r}T(r-s)f(s)\d s\in \ell^{\infty}$ by \prettyref{prop:riemann_int} and 
\begin{align*}
  \langle e_{k},(T\ast f)(r)\rangle
&=\int_{0}^{r}\langle e_{k}, T(r-s)f(s)\rangle\d s
 =\int_{0}^{r}\langle e_{k}, (\e^{m_{n}(r-s)}f_{n}(s))_{n\in\N}\rangle\d s\\
&=\int_{0}^{r}\e^{m_{k}(r-s)}f_{k}(s)\d s\eqqcolon \psi_{k},
\end{align*}
implying 
\begin{equation}\label{eq:better_than_l_infty}
|\psi_{k}|\leq \frac{1}{|m_{k}|}(1-\e^{m_{k}r})\sup_{s\in[0,r]}|f_{k}(s)|\leq \frac{1}{|m_{k}|}\sup_{s\in[0,r]}\|f(s)\|_{\infty}
\end{equation}
for all $k\in\N$. We observe that the set $\{f(s)\;|\;s\in[0,r]\}$ is $\beta_{0}$-bounded because 
$f\in\mathrm{C}([0,r];(\ell^{\infty},\beta_{0}))$. Due to \cite[Theorem 4.7, p.~320]{sentilles1972} this yields that 
$\{f(s)\;|\;s\in[0,r]\}$ is $\|\cdot\|_{\infty}$-bounded. Thus we obtain from \eqref{eq:better_than_l_infty} that 
$(m_{n}\psi_{n})_{n\in\N}\in\ell^{\infty}$, which implies that $(T\ast f)(r)\in D(A)$ since $(T\ast f)(r)_{n}=\psi_{n}$ for all $n\in\N$. 
Hence $(T(t))_{t\in [0,r]}$ is of bounded semivariation for every $r>0$ by \prettyref{rem:bounded_semivar_for_some_r_all_r} (a) and case (i) of the equivalence (c)$\Leftrightarrow$(d) 
of \prettyref{thm:Cmax_reg_equiv_Cadm}. Moreover, if $(m_{n})_{n\in\N}\notin\ell^{\infty}$, then $D(A)\neq \ell^{\infty}$ and 
$A\not\in\mathcal{L}((\ell^{\infty},\beta_{0}))$. 
\end{exa}

Using \prettyref{cor:strict_sol_equiv_bounded_semivar} (i) with $B=\id$ and $U=X$, 
an example of a strongly continuous semigroup $(T(t))_{t\geq 0}$ on a Banach space $X$ such that 
$(T(t))_{t\in [0,r]}$ is not of bounded semivariation for some (every) $r>0$ is for instance given in 
\cite[Example 2.4.6, p.~51]{lorenzirhandi2021}. 
Further, \prettyref{ex:sg_bounded_semivar} (a) and (b) are not some random examples. 
Baillon's theorem actually tells us that it is quite rare for a
strongly continuous semigroup on a Banach space to be of (locally) bounded semivariation. 
Namely, if $(T(t))_{t\geq 0}$ is a strongly continuous semigroup 
on a Banach space $X$ with generator $A$ such that $(T(t))_{t\in [0,r]}$ is of bounded semivariation 
for some $r>0$, then $A\in\mathcal{L}(X)$ or $X$ contains an isomorphic copy of $(c_{0},\|\cdot\|_{\infty})$ 
by \cite[Lemma 3.4, p.~429]{travis1981} (cf.~\cite[Th\'eor\`eme 1, p.~757]{baillon1980} and 
\cite[Theorem 0.5 (Baillon's Theorem), p.~49]{eberhardt1992}). 

That being said, let us take a closer look at the semigroup in \prettyref{ex:sg_bounded_semivar} (c). 
The semigroups in \prettyref{ex:sg_bounded_semivar} (b) and (c) are related in the following way, which follows from 
\cite[Theorem 5.6 (K\"uhnemund), p.~340, p.~354]{budde2019} 
and \cite[Theorem 2.8 (g), p.~243--244]{kruse_schwenninger2024} 
(see also \cite[Example 2.3, p.~147--148]{JacoSchwWint2022}). If $m_{n}\coloneqq -n$ for $n\in\N$, 
then we have that $(T_{0}(t))_{t\geq 0}$ is the restriction of $(T(t))_{t\geq 0}$ to $c_{0}$ 
which is the space of $\|\cdot\|_{\infty}$-strong continuity of $(T(t))_{t\geq 0}$ and also $\beta_{0}$-dense 
in $\ell^{\infty}$. Further, $A_{0}$ is the part of $A$ in $c_{0}$ and so $A_{0}=A_{\mid c_{0}}$ 
with $D(A_{0})=\{x\in c_{0}\;|\;(-nx_{n})_{n\in\N}\in c_{0}\}$. 
The space $(\ell^{\infty},\beta_{0})$ does not contain $(c_{0},\|\cdot\|_{\infty})$ as a topological subspace 
since
\[
\overline{c_{0}}^{\|\cdot\|_{\infty}}=c_{0}\text{ and }\overline{c_{0}}^{\beta_{0}}=\ell^{\infty}
\]
where the closures are taken w.r.t.~$\|\cdot\|_{\infty}$ and $\beta_{0}$, respectively. 
Moreover, the space $(\ell^{\infty},\beta_{0})$ is not normable by \cite[Theorem 4.8, p.~321]{sentilles1972} 
(and the comments concerning $\beta_{0}$ after its proof). 
Due to \prettyref{ex:sg_bounded_semivar} (c) we also know that $(T(t))_{t\in [0,r]}$ is of bounded semivariation 
for every $r>0$ and $A\not\in\mathcal{L}((\ell^{\infty},\beta_{0}))$. 
Thus the conclusion of Baillon's theorem does not hold for the semigroup $(T(t))_{t\geq 0}$ on $\ell^{\infty}$. 
However, this is no contradiction as this semigroup is not strongly continuous w.r.t.~$\|\cdot\|_{\infty}$. 
On the other hand, this raises the question whether something similar to Baillon's theorem holds for 
strongly continuous locally equicontinuous semigroups of (locally) bounded semivariation on sequentially complete 
non-normable Hausdorff locally convex spaces. 

\begin{prob}
Let $r>0$, $X$ be a sequentially complete non-normable Hausdorff locally convex space, 
$(T(t))_{t\geq 0}$ a strongly continuous locally equicontinuous semigroup on $X$ with generator $A$. 
Is it true that if $(T(t))_{t\in [0,r]}$ is of bounded semivariation, 
then $A\in\mathcal{L}(X)$ or $X$ contains an isomorphic copy of  $(\ell^{\infty},\beta_{0})$?
\end{prob}

Now, let us turn back to the ACP \eqref{eq:ACP}. 
If we drop the regularity requirement in $t=0$ of the ACP \eqref{eq:ACP}, 
then we will see that the ACP is actually solvable 
for all $x\in X$, not just $x\in D(A)$, if $A$ is the generator of a strongly continuous locally 
equicontinuous semigroup of (locally) bounded semivariation on a sequentially complete space $X$. 

\begin{defn}\label{defn:classical_solution}
Let $r>0$, $X$ be a Hausdorff locally convex space, $A\colon D(A)\subset X\to X$ a linear map, 
$f\in\mathrm{C}([0,r];X)$ and $x\in X$. We consider the abstract Cauchy problem (ACP) 
\begin{equation}\label{eq:ACP_outside_0}
u'(t)=Au(t)+f(t),\quad t\in (0,r],\quad u(0)=x.
\end{equation}
We call $u\in\mathrm{C}^{1}((0,r];X)\cap\mathrm{C}([0,r];X)$ a \emph{classical solution} of the 
ACP \eqref{eq:ACP_outside_0} if $u(t)\in D(A)$ for all $t\in (0,r]$ and $u$ fulfils \eqref{eq:ACP_outside_0}. 
Here, $\mathrm{C}^{1}((0,r];X)$ denotes the space of continuously 
differentiable functions on $(0,r]$ with values in $X$ where differentiability in $r$ 
means right-differentiability. 
\end{defn}

If the ACP \eqref{eq:ACP_outside_0} has a classical solution $u$, then $Au=u'-f\in\mathrm{C}((0,r];X)$. 
The difference between strict and classical solutions is the regularity in $t=0$ and 
to which sets the initial values $x$ necessarily belong to. 
If the ACP \eqref{eq:ACP_outside_0} has a classical solution $u$, then 
$x=u(0)=\lim_{t\to 0\rlim}u(t)\in \overline{D(A)}$ since $u\in\mathrm{C}([0,r];X)$ and 
$u(t)\in D(A)$ for all $t\in (0,r]$ where $\overline{D(A)}$ denotes the closure of $D(A)$ in $X$. 
If $x\in D(A)$, then a strict solution is also a classical solution. 
In the case that $X$ is a Banach space and $A$ a sectorial operator 
the definition of a classical solution is given in \cite[Definition 3.4.1, p.~70]{lorenzirhandi2021}. 

\begin{cor}\label{cor:classical_sol_bounded_semivar}
Let $r>0$, $X$ be a sequentially complete Hausdorff locally convex space and $(T(t))_{t\geq 0}$ a strongly continuous 
locally equicontinuous semigroup on $X$ with generator $A$. 
If $(T(t))_{t\in [0,r]}$ is of bounded semivariation, then the mild solution $u$ of the ACP \eqref{eq:ACP} 
given by \eqref{eq:mild_solution} is a classical solution of the ACP \eqref{eq:ACP_outside_0} for every 
$f\in\mathrm{C}([0,r];X)$ and $x\in X$.
\end{cor}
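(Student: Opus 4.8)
The plan is to write the mild solution as $u(t)=T(t)x+(T\ast f)(t)$ and verify the three requirements of Definition \ref{defn:classical_solution} one by one, reusing the machinery already developed in the case $U=X$, $B=\id$. First I would record that $u\in\mathrm{C}([0,r];X)$ with $u(0)=x$ by Remark \ref{rem:mild_solution_cont} and the strong continuity of the semigroup. For the spatial regularity, the key input is the smoothing provided by bounded semivariation: by Corollary \ref{cor:orbit_in_domain_for_pos_time} one has $T(t)x\in D(A)$ for every $t>0$ and $t\mapsto AtT(t)x$ is continuous on $[0,\infty)$, hence $AT(\cdot)x=\tfrac1{(\cdot)}\bigl(A(\cdot)T(\cdot)x\bigr)$ is continuous on $(0,r]$; and by Proposition \ref{prop:gen_commute_r_int_rs_int} (applicable since $(T(t))_{t\in[0,r]}$ is of bounded semivariation by hypothesis) together with Proposition \ref{prop:continuity_rs_int}(i) (with $U=X$, $B=\id$, so that $U$ is an invariant subspace and $B$ commutes with the semigroup) one gets $(T\ast f)(t)\in D(A)$ for all $t\in[0,r]$ and $A(T\ast f)=I_{T,\id,f}$ continuous on $[0,r]$. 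Adding these, $u(t)\in D(A)$ for all $t\in(0,r]$ and $Au=AT(\cdot)x+A(T\ast f)\in\mathrm{C}((0,r];X)$.

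It remains to show $u\in\mathrm{C}^{1}((0,r];X)$ with $u'=Au+f$ on $(0,r]$; here I would adapt the difference-quotient computation from the proof of Proposition \ref{prop:nec_suff_mild_strict}, but carefully restricted away from $t=0$. By Proposition \ref{prop:mild_solution_solves_integrated_ACP}, $\int_{0}^{t}u(s)\,\d s\in D(A)$ for all $t$ and $u(t)=x+A\int_{0}^{t}u(s)\,\d s+\int_{0}^{t}f(s)\,\d s$; subtracting the values at $t$ and $t+h$ and using linearity of $A$ on $D(A)$ gives, for $t\in(0,r)$ and $h$ with $t+h\in(0,r]$,
\[
\frac{u(t+h)-u(t)}{h}=A\Bigl(\frac1h\int_{t}^{t+h}u(s)\,\d s\Bigr)+\frac1h\int_{t}^{t+h}f(s)\,\d s,
\]
and the analogous left-difference identity at $t=r$. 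Since the interval between $t$ and $t+h$ stays inside $(0,r]$, on it $u$ takes values in $D(A)$ and $Au$ is continuous, hence Riemann integrable; thus Remark \ref{rem:riemann_int_A_commute} (with $A$ closed by \cite[Proposition 1.4, p.~262]{komura1968}) lets me pull $A$ inside: $A\bigl(\tfrac1h\int_{t}^{t+h}u\bigr)=\tfrac1h\int_{t}^{t+h}Au$. Letting $h\to0$ (from both sides for $t\in(0,r)$, from the left for $t=r$) and invoking Remark \ref{rem:mean_int_to_zero} for the continuous functions $Au_{\mid[t,r]}$ and $f$ yields $u'(t)=Au(t)+f(t)$ for all $t\in(0,r]$. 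Then $u'=Au+f$ is continuous on $(0,r]$, so $u\in\mathrm{C}^{1}((0,r];X)$, and together with $u\in\mathrm{C}([0,r];X)$, $u(0)=x$ and $u(t)\in D(A)$ on $(0,r]$, $u$ is a classical solution of \eqref{eq:ACP_outside_0}.

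I do not expect a genuinely hard step here; the proof is essentially an assembly of Corollary \ref{cor:orbit_in_domain_for_pos_time}, Propositions \ref{prop:gen_commute_r_int_rs_int}, \ref{prop:continuity_rs_int}, \ref{prop:mild_solution_solves_integrated_ACP} and Remarks \ref{rem:riemann_int_A_commute}, \ref{rem:mean_int_to_zero}. The one point that needs care --- and the reason one cannot simply quote Proposition \ref{prop:nec_suff_mild_strict} --- is that $x=u(0)$ need not lie in $D(A)$, so $Au$ is only known to be defined and continuous on $(0,r]$; every use of closedness of $A$, of Remark \ref{rem:riemann_int_A_commute}, and of Remark \ref{rem:mean_int_to_zero} must therefore be confined to compact subintervals of $(0,r]$, which is exactly what the restrictions $h\in(0,r-t)$ resp.\ $h\in(-t,0)$ guarantee.
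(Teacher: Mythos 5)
Your proposal is correct and follows essentially the same route as the paper: continuity of $u$ via Remark \ref{rem:mild_solution_cont}, membership $u(t)\in D(A)$ and continuity of $Au$ on $(0,r]$ via Corollary \ref{cor:orbit_in_domain_for_pos_time} together with the bounded-semivariation machinery (you cite Propositions \ref{prop:gen_commute_r_int_rs_int} and \ref{prop:continuity_rs_int}(i) directly, the paper cites Theorem \ref{thm:Cmax_reg_equiv_Cadm}(i), which is just these two results packaged), and then the difference-quotient argument of Proposition \ref{prop:nec_suff_mild_strict}(b)$\Rightarrow$(a) localized to compact subintervals of $(0,r]$ so that Remarks \ref{rem:riemann_int_A_commute} and \ref{rem:mean_int_to_zero} apply despite $x\notin D(A)$. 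Your closing caveat about confining closedness and the integral-commutation arguments away from $t=0$ is exactly the point the paper makes.
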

\begin{proof}
Let $f\in\mathrm{C}([0,r];X)$, $x\in X$ and $u$ denote the mild solution of the ACP \eqref{eq:ACP} 
given by \eqref{eq:mild_solution}. 
Then we have $u(t)= T(t)x-(T\ast f)(t)$ for all $t\in[0,r]$ and $u\in\mathrm{C}([0,r];X)$ 
by \prettyref{rem:mild_solution_cont}. Since $(T(t))_{t\in [0,r]}$ is of bounded semivariation, 
$(T(t))_{t\geq 0}$ satisfies $\mathrm{C}$-maximal regularity for $r$ by \prettyref{thm:Cmax_reg_equiv_Cadm} 
(i) with $U\coloneqq X$ and $B\coloneqq \id$. 
Further, $T(t)x\in D(A)$ for all $t>0$ and the map $(0,r]\to X$, $t\mapsto AT(t)x$, is continuous 
by \prettyref{cor:orbit_in_domain_for_pos_time}. 
Hence $u(t)\in D(A)$ for all $t\in (0,r]$ and the map $(0,r]\ni t\mapsto Au(t)= AT(t)x-A(T\ast f)(t)\in X$ 
is also continuous. 
Looking at the proof of the implication (b)$\Rightarrow$(a) of \prettyref{prop:nec_suff_mild_strict}, we
see that it still works for all $t\in (0,r]$ by noting that for every such $t$ there is some $a>0$ 
with $a<t$ and $Au\in\mathrm{C}([a,r];X)$. Therefore we can still apply \prettyref{rem:riemann_int_A_commute} 
and \prettyref{rem:mean_int_to_zero} and obtain that $u$ is differentiable in $t\in (0,r]$ and 
$u'(t)=Au(t)+f(t)$. Since $Au$ and $f$ are continuous on $(0,r]$, $u'$ is also continuous on $(0,r]$, 
yielding that $u$ is a classical solution.
\end{proof}

In the case that $X$ is a Banach space, it also follows from \prettyref{cor:orbit_in_domain_for_pos_time} 
that $(T(t))_{t\geq 0}$ is analytic (holomorphic) if it is a strongly continuous semigroup on $X$ such that 
$(T(t))_{t\in [0,r]}$ is of bounded semivariation for some $r>0$ (see \cite[Lemma 3.3, p.~429]{travis1981}). 
Is this still true in our more general setting where one might use one of the three equivalent assertions in 
\cite[Theorem 1, p.~154]{dembart1974} to define analyticy of a semigroup on a sequentially complete 
complex Hausdorff locally convex space?

\begin{prob}\label{prob:bounded_sv_sg_analytic}
Let $r>0$, $X$ be a sequentially complete Hausdorff locally convex space over the field $\C$ 
and $(T(t))_{t\geq 0}$ a strongly continuous locally equicontinuous semigroup on $X$. 
Is $(T(t))_{t\geq 0}$ an analytic semigroup if $(T(t))_{t\in [0,r]}$ is of bounded semivariation?
\end{prob}

\section{\texorpdfstring{$\mathrm{C}$}{C}-admissibility}
\label{sect:C_adm}

Let $X$ and $U$ be Hausdorff locally convex spaces and $(T(t))_{t\geq 0}$ a strongly continuous 
locally equicontinuous semigroup on $X$ with generator $A$, $x\in X$ and $f\in\mathrm{C}([0,r];U)$. 
In this section we consider a modified version of the ACP \eqref{eq:ACP_B}, namely
\begin{equation}\label{eq:ACP_adm}
u'(t)=Au(t)+Bf(t),\quad t\in[0,r],\quad u(0)=x,
\end{equation}
where the control operator $B$ need not be 
$X$-valued anymore but has values in larger Hausdorff locally convex space $V$ 
which contains $X$ (in the sense of a topological embedding), so $B\in\mathcal{L}(U;V)$ 
(see \cite[p.~527--528]{weiss1989a}).
Now, we cannot apply our theory from \prettyref{sect:ACP_Cmax_reg} and \prettyref{sect:sg_bounded_semivar} 
anymore since for instance the mild solution $u(t)=T(t)x+(T\ast Bf)(t)$ for $t\geq 0$ is not defined. 
Nevertheless, there is a choice of $V$ that allows us to extend our semigroup to $V$ so that the mild solution is 
well-defined again. This choice of $V$ is the extrapolation space $X_{-1}$ of $X$. 
However, to define $X_{-1}$ and to obtain the properties we would like to have, we have to strengthen our 
assumptions on $X$ and $(T(t))_{t\geq 0}$ a bit.

We recall the following construction of the extrapolation space $X_{-1}$ from \cite{wegner2014}. 
Let $X$ be a complete Hausdorff locally convex space and $\Gamma_{X}$ a fundamental system of seminorms 
of its topology $\tau$. 
Let $(T(t))_{t\geq 0}$ be a strongly continuous quasi-equicontinuous semigroup on $X$ with generator $A$. 
As in \cite[p.~258]{albanese2013} we define the \emph{resolvent set} $\rho(A)$ of $A$ by 
\[
\rho(A)\coloneqq\{\lambda\in\C\;|\;(\lambda-A)\colon D(A)\to X\text{ is bijective and }(\lambda-A)^{-1}
\in\mathcal{L}(X)\}.
\]
Suppose that $0\in\rho(A)$, which can always be realized by rescaling (see \cite[Lemma 1, p.~450]{wegner2014} 
and the comments after its proof). We define the system of seminorms
\[
\widecheck{p}_{-1}(x)\coloneqq p(A^{-1}x),\quad x\in X,
\]
for $p\in\Gamma_{X}$ and we denote the associated Hausdorff locally convex topology on $X$ by 
$\widecheck{\tau}_{-1}$. Then we define $(X_{-1},\tau_{-1})$ to be the completion of $(X,\widecheck{\tau}_{-1})$. 
For every $t\geq 0$ there exists a unique continuous extension 
$T_{-1}(t)\colon (X_{-1},\tau_{-1})\to (X_{-1},\tau_{-1})$ of $T(t)$. The family $(T_{-1}(t))_{t\geq 0}$ is a 
strongly continuous quasi-equicontinuous semigroup on $(X_{-1},\tau_{-1})$ whose generator is denoted by 
$(A_{-1},D(A_{-1}))$. We have $D(A_{-1})=X$ and $A_{-1}\colon (X,\tau)\to (X_{-1},\tau_{-1})$ is a 
topological isomorphism which is the unique continuous extension of $A\colon (D(A),\tau_{1})\to (X,\tau)$  
by \cite[Theorem 1, p.~451]{wegner2014} where $\tau_{1}$ is the Hausdorff locally convex topology on $D(A)$ 
induced by the system of seminorms given by 
\[
p_{1}(x)\coloneqq p(A x),\quad x\in D(A),
\]
for $p\in\Gamma_{X}$. Furthermore, it holds that $T_{-1}(t)=A_{-1}T(t)A_{-1}^{-1}$ for every $t\geq 0$ by 
\cite[Corollary 1 (ii), p.~454]{wegner2014}. Given $x\in X$, $f\in\mathrm{C}([0,r];U)$ and 
$B\in\mathcal{L}(U;X_{-1})$, the extension of the ACP \eqref{eq:ACP_adm} is now the ACP 
\[
u'(t)=A_{-1}u(t)+Bf(t),\quad t\in[0,r],\quad u(0)=x.
\]
Its mild solution $u$ given by $u(t)=T_{-1}(t)x+(T_{-1}\ast Bf)(t)$ for $t\geq 0$ is a well-defined 
continuous function on $[0,r]$ with values in $X_{-1}$ by \prettyref{rem:mild_solution_cont}. 
For $t\geq 0$ we note that $T_{-1}(t)x\in X$ since $T_{-1}(t)$ is an extension of $T(t)$ and $x\in X$,  
but $(T_{-1}\ast Bf)(t)\in X_{-1}$ in general since it is defined by a Riemann integral in $(X_{-1},\tau_{-1})$. 
However, it might happen that $(T_{-1}\ast Bf)(t)\in X$ for all $t\geq 0$ and that results in the following definition.

\begin{defn}\label{defn:C_adm}
Let $r>0$, $X$ be a complete Hausdorff locally convex space and $(T(t))_{t\geq 0}$ a strongly continuous 
quasi-equicontinuous semigroup on $X$ with generator $A$ such that $0\in\rho(A)$.
Let $U$ be a Hausdorff locally convex space. 
An operator $B\in \mathcal{L}(U;X_{-1})$ is called \emph{$\mathrm{C}$-admissible} for $r$
if the map 
\[
 \Phi_{r}^{B}\colon \mathrm{C}([0,r];U)\to X_{-1},\; \Phi_{r}^{B}f\coloneqq (T_{-1}\ast Bf)(r)\coloneqq \int_{0}^{r}T_{-1}(r-s)Bf(s)\d s,
\]
has range in $X$ where the integral is a Riemann integral in $(X_{-1},\tau_{-1})$. 
\end{defn}

For Banach spaces $X$ and $U$ the definition above is already presented in 
\cite[Definition 1.2 (1), p.~144--145]{JacoSchwWint2022}. 
It follows from \prettyref{prop:adm_cmax_for_some_r_for_all} (b) that 
$B$ is $\mathrm{C}$-admissible for all $r>0$ if $B$ is $\mathrm{C}$-admissible for some $r>0$.

\begin{prop}\label{prop:equiv_C_admiss}
Let $r>0$, $X$ be a complete Hausdorff locally convex space and $(T(t))_{t\geq 0}$ a strongly continuous 
quasi-equicontinuous semigroup on $X$ with generator $A$ such that $0\in\rho(A)$.  
Let $U$ be a Hausdorff locally convex space and $B\in\mathcal{L}(U;X_{-1})$. 
Then the following assertions are equivalent.
\begin{enumerate}
\item[(a)] $(T\ast A_{-1}^{-1}Bf)(t)\in D(A)$ for all $f\in\mathrm{C}([0,r];U)$ and $t\in[0,r]$.
\item[(b)] $B$ is $\mathrm{C}$-admissible for $r$.
\end{enumerate}
If one of the two equivalent conditions above is fulfilled, then
\[
 (T_{-1}\ast Bf)(t)
=A(T\ast A_{-1}^{-1}Bf)(t)
\]
for all $f\in\mathrm{C}([0,r];U)$ and $t\in[0,r]$.
\end{prop}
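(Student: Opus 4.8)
The plan is to relate the extrapolated convolution $(T_{-1}\ast Bf)$ to a convolution inside $X$ by pulling the isomorphism $A_{-1}^{-1}\colon X_{-1}\to X$ through the Riemann integral, and then invoking $\mathrm{C}$-maximal regularity for the operator $\widetilde{B}\coloneqq A_{-1}^{-1}B\in\mathcal{L}(U;X)$. The key identity to exploit is $T_{-1}(t)=A_{-1}T(t)A_{-1}^{-1}$, valid for all $t\ge 0$ by \cite[Corollary 1 (ii), p.~454]{wegner2014}. First I would fix $f\in\mathrm{C}([0,r];U)$ and $t\in[0,r]$ and apply $A_{-1}^{-1}$, which is continuous from $(X_{-1},\tau_{-1})$ to $(X,\tau)$, to the Riemann integral defining $(T_{-1}\ast Bf)(t)$; since continuous linear maps commute with Riemann integrals (this is standard and already used implicitly throughout the paper, e.g.\ in \prettyref{rem:riemann_int_A_commute} type arguments), one gets
\[
A_{-1}^{-1}(T_{-1}\ast Bf)(t)=\int_{0}^{t}A_{-1}^{-1}T_{-1}(t-s)Bf(s)\,\d s
=\int_{0}^{t}T(t-s)A_{-1}^{-1}Bf(s)\,\d s=(T\ast \widetilde{B}f)(t),
\]
where in the middle step I substituted $A_{-1}^{-1}T_{-1}(t-s)=T(t-s)A_{-1}^{-1}$, which follows from $T_{-1}(t-s)=A_{-1}T(t-s)A_{-1}^{-1}$ and hence $A_{-1}^{-1}T_{-1}(t-s)=T(t-s)A_{-1}^{-1}$. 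Note $A_{-1}^{-1}Bf(s)\in X$ and $T(t-s)$ acts on $X$, so the final integral genuinely lives in $X$ and $(T\ast\widetilde{B}f)$ is the ordinary convolution from \prettyref{prop:riemann_int}.

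Next I would establish the equivalence (a)$\Leftrightarrow$(b). Assume (a): then $(T\ast\widetilde{B}f)(r)\in D(A)$, and by the identity above $(T_{-1}\ast Bf)(r)=A_{-1}\bigl(A_{-1}^{-1}(T_{-1}\ast Bf)(r)\bigr)=A_{-1}(T\ast\widetilde{B}f)(r)$; since $(T\ast\widetilde{B}f)(r)\in D(A)$ and $A_{-1}$ restricted to $D(A)$ agrees with $A$ (because $A_{-1}$ is the continuous extension of $A$, by \cite[Theorem 1, p.~451]{wegner2014}), this equals $A(T\ast\widetilde{B}f)(r)\in X$. Hence $\Phi_{r}^{B}f=(T_{-1}\ast Bf)(r)\in X$, i.e.\ $B$ is $\mathrm{C}$-admissible. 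Conversely, assume (b): then $(T_{-1}\ast Bf)(r)\in X$, so $(T\ast\widetilde{B}f)(r)=A_{-1}^{-1}(T_{-1}\ast Bf)(r)\in A_{-1}^{-1}(X)=D(A_{-1})\cap\{\text{vectors mapped into }X\}$; more precisely, $(T\ast\widetilde B f)(r)=A_{-1}^{-1}y$ with $y=(T_{-1}\ast Bf)(r)\in X$, and $A_{-1}^{-1}$ maps $X$ into $D(A)$ since $A\colon D(A)\to X$ is the isomorphism with inverse $A^{-1}$ and $A_{-1}^{-1}|_{X}=A^{-1}$. Therefore $(T\ast\widetilde{B}f)(r)\in D(A)$ for all $f$, and by \prettyref{prop:adm_cmax_for_some_r_for_all}(a) applied with $U$ and the operator $\widetilde B$ in place of $B$, this upgrades to $(T\ast\widetilde{B}f)(t)\in D(A)$ for all $t\in[0,r]$, which is (a). Finally, the displayed formula $(T_{-1}\ast Bf)(t)=A(T\ast A_{-1}^{-1}Bf)(t)$ is then exactly the identity $A_{-1}(T\ast\widetilde Bf)(t)=A(T\ast\widetilde Bf)(t)$ established above, now valid for every $t\in[0,r]$ under either equivalent hypothesis.

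The main obstacle I anticipate is the careful bookkeeping of where each quantity lives: $T_{-1}(t)x\in X$ for $x\in X$ but $(T_{-1}\ast Bf)(t)$ is a priori only in $X_{-1}$; the Riemann integral in the definition of $\Phi_{r}^{B}$ is an $(X_{-1},\tau_{-1})$-integral, and one must justify that applying the \emph{continuous} map $A_{-1}^{-1}\colon X_{-1}\to X$ commutes with it and produces the \emph{$X$-valued} Riemann integral of \prettyref{prop:riemann_int} (the integrand $s\mapsto T(t-s)\widetilde Bf(s)$ being continuous into $X$). This is a soft functional-analytic point — continuous linear images of Riemann sums converge to the continuous linear image of the limit — but it is the crux that makes everything line up, and I would state it explicitly, perhaps citing the same source used for \prettyref{rem:riemann_int_A_commute}. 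A secondary, purely notational care is the distinction between $A_{-1}^{-1}$ as a map on all of $X_{-1}$ and $A^{-1}$ as a map on $X$; they agree on $X\subset X_{-1}$, and $A_{-1}|_{D(A)}=A$, which is what lets us pass freely between $A_{-1}$ and $A$ in the final formula.
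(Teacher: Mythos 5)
Your proposal is correct and follows essentially the same route as the paper: the identity $(T_{-1}\ast Bf)(t)=A_{-1}(T\ast A_{-1}^{-1}Bf)(t)$ obtained from $T_{-1}(t)=A_{-1}T(t)A_{-1}^{-1}$ by commuting the topological isomorphism with the Riemann integral, together with $A_{-1}\vert_{D(A)}=A$ and \prettyref{prop:adm_cmax_for_some_r_for_all} (a) to pass from $t=r$ to all $t\in[0,r]$. The only (immaterial) difference is that you apply that lemma to $(T(t))_{t\geq 0}$ on $X$ with control operator $A_{-1}^{-1}B$, whereas the paper applies it to $(T_{-1}(t))_{t\geq 0}$ on $X_{-1}$ with $B$ and then composes with $A^{-1}$.
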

\begin{proof}
$(T_{-1}\ast Bf)(t)$ and $(T\ast A_{-1}^{-1}Bf)(t)$ are well-defined in $X_{-1}$ and $X$, respectively, 
for every $t\in [0,r]$ and $f\in\mathrm{C}([0,r];U)$ by \prettyref{prop:riemann_int} since $(T_{-1}(t))_{t\geq 0}$ is a 
strongly continuous quasi-equicontinuous semigroup on the complete space $(X_{-1},\tau_{-1})$, 
$B\in\mathcal{L}(U;X_{-1})$ and $A_{-1}\in\mathcal{L}(X;X_{-1})$ is a topological isomorphism.

Using that $T_{-1}(t)=A_{-1}T(t)A_{-1}^{-1}$ for every $t\geq 0$ and the continuity of $A_{-1}$, we get 
for every $f\in\mathrm{C}([0,r];U)$ and $t\in [0,r]$ that
\begin{align*}
 (T_{-1}\ast Bf)(t)
&=\int_{0}^{t}T_{-1}(t-s)Bf(s)\d s
 =\int_{0}^{t}A_{-1}T(t-s)A_{-1}^{-1}Bf(s)\d s\\
&=A_{-1}\int_{0}^{t}T(t-s)A_{-1}^{-1}Bf(s)\d s
 =A_{-1}(T\ast A_{-1}^{-1}Bf)(t).
\end{align*}
$A_{-1}$ is the unique continuous extension of the bijective map 
$A\colon (D(A),\tau_{1})\to (X,\tau)$. 
Hence, if $(T\ast A_{-1}^{-1}Bf)(t)\in D(A)$, then 
\[
 (T_{-1}\ast Bf)(t)
=A_{-1}(T\ast A_{-1}^{-1}Bf)(t)
=A(T\ast A_{-1}^{-1}Bf)(t)\in X,
\]
which gives that (a) implies (b).

On the other hand, if $(T_{-1}\ast Bf)(r)\in X$, then $(T_{-1}\ast Bf)(t)\in X$ for all $f\in\mathrm{C}([0,r];U)$ 
and $t\in [0,r]$ by \prettyref{prop:adm_cmax_for_some_r_for_all} (a) applied to the 
strongly continuous quasi-equicontinuous semigroup $(T_{-1}(t))_{t\geq 0}$  
on the complete space $(X_{-1},\tau_{-1})$ with $X=D(A_{-1})$. It follows that
\[
(T\ast A_{-1}^{-1}Bf)(t)=A^{-1}A_{-1}(T\ast A_{-1}^{-1}Bf)(t)=A^{-1}(T_{-1}\ast Bf)(t)\in D(A).
\]
Thus (b) implies (a).
\end{proof}

Our next result shows that the $\mathrm{C}$-admissibility of $A_{-1}$ already implies the 
$\mathrm{C}$-admissibility of any other control operator $B$. This is the analogon 
of \prettyref{prop:adm_cmax_for_some_r_for_all} (d) for $\mathrm{C}$-maximal regularity and 
\prettyref{rem:bounded_semivar_for_some_r_all_r} (b) for bounded semivariation.

\begin{cor}\label{cor:equiv_C_admiss}
Let $r>0$, $X$ be a complete Hausdorff locally convex space and $(T(t))_{t\geq 0}$ a strongly continuous 
quasi-equicontinuous semigroup on $X$ with generator $A$ such that $0\in\rho(A)$. 
Then the following assertions are equivalent.
\begin{enumerate}
\item[(a)] $(T\ast f)(t)\in D(A)$ for all $f\in\mathrm{C}([0,r];X)$ and $t\in[0,r]$.
\item[(b)] $A_{-1}$ is $\mathrm{C}$-admissible for $r$.
\item[(c)] For any Hausdorff locally convex space $U$ every operator $B\in\mathcal{L}(U;X_{-1})$ 
is $\mathrm{C}$-admissible for $r$.
\end{enumerate} 
\end{cor}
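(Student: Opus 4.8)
The plan is to reduce everything to \prettyref{prop:equiv_C_admiss}. First I would prove the equivalence $(a)\Leftrightarrow(b)$ by applying \prettyref{prop:equiv_C_admiss} with the choices $U\coloneqq X$ and $B\coloneqq A_{-1}$. Recall that $A_{-1}\colon(X,\tau)\to(X_{-1},\tau_{-1})$ is a topological isomorphism, in particular $A_{-1}\in\mathcal{L}(X;X_{-1})$, so this is an admissible choice. Since $A_{-1}^{-1}A_{-1}=\id_{X}$, condition (a) of \prettyref{prop:equiv_C_admiss} then reads $(T\ast f)(t)\in D(A)$ for all $f\in\mathrm{C}([0,r];X)$ and $t\in[0,r]$, which is exactly assertion (a) of the corollary, while condition (b) of \prettyref{prop:equiv_C_admiss} is precisely that $A_{-1}$ is $\mathrm{C}$-admissible for $r$, i.e.\ assertion (b) of the corollary. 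Hence $(a)\Leftrightarrow(b)$.

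Next, $(c)\Rightarrow(b)$ is immediate: it suffices to specialise (c) to $U\coloneqq X$ and $B\coloneqq A_{-1}\in\mathcal{L}(X;X_{-1})$. It remains to show $(a)\Rightarrow(c)$. Let $U$ be a Hausdorff locally convex space and $B\in\mathcal{L}(U;X_{-1})$. Since $A_{-1}^{-1}\in\mathcal{L}(X_{-1};X)$, the composition $A_{-1}^{-1}B$ lies in $\mathcal{L}(U;X)$, so for every $f\in\mathrm{C}([0,r];U)$ we have $A_{-1}^{-1}Bf\in\mathrm{C}([0,r];X)$. Applying assumption (a) to this function yields $(T\ast A_{-1}^{-1}Bf)(t)\in D(A)$ for all $t\in[0,r]$. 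By the equivalence $(a)\Leftrightarrow(b)$ of \prettyref{prop:equiv_C_admiss}, now invoked with the given $U$ and $B$, this says exactly that $B$ is $\mathrm{C}$-admissible for $r$. As $U$ and $B$ were arbitrary, (c) follows, completing the cycle.

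The argument involves no genuine obstacle; the only thing one must check carefully is that the auxiliary operators $A_{-1}$ and $A_{-1}^{-1}B$ are continuous with the correct target spaces, so that \prettyref{prop:equiv_C_admiss} is applicable (the completeness of $X$ and the quasi-equicontinuity of $(T(t))_{t\geq 0}$, which are standing hypotheses here, are what make the extrapolation construction and \prettyref{prop:equiv_C_admiss} available in the first place). If desired, one may additionally remark, via \prettyref{prop:adm_cmax_for_some_r_for_all}(b) applied to the strongly continuous quasi-equicontinuous semigroup $(T_{-1}(t))_{t\geq 0}$ on the complete space $(X_{-1},\tau_{-1})$ with $X=D(A_{-1})$, that each of (a)--(c) for a fixed $r>0$ is in fact equivalent to the corresponding statement for every $r>0$.
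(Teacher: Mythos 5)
Your proposal is correct and follows essentially the paper's argument: the equivalence $(a)\Leftrightarrow(b)$ via \prettyref{prop:equiv_C_admiss} with $B\coloneqq A_{-1}$, the trivial specialisation for $(c)\Rightarrow(b)$, and the factorisation $Bf=A_{-1}\bigl(A_{-1}^{-1}Bf\bigr)$ with $A_{-1}^{-1}Bf\in\mathrm{C}([0,r];X)$ for the remaining direction. The only cosmetic difference is that you close the cycle by reapplying \prettyref{prop:equiv_C_admiss} with general $U$ and $B$ to get $(a)\Rightarrow(c)$, whereas the paper argues $(b)\Rightarrow(c)$ directly on the level of $\Phi_{r}^{B}$; both rest on the same factoring through $A_{-1}$.
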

\begin{proof}
(c)$\Rightarrow$(b) This implication is obvious since $A_{-1}\in\mathcal{L}(X;X_{-1})$.

(b)$\Rightarrow$(c) The map $f\mapsto A_{-1}f$ is a topological isomorphism from $\mathrm{C}([0,r];X)$ to 
$\mathrm{C}([0,r];X_{-1})$ 
since $A_{-1}\colon (X,\tau)\to (X_{-1},\tau_{-1})$ is a topological isomorphism where 
$\tau$ denotes the topology of $X$. Let $U$ be a Hausdorff locally convex space and 
suppose that $B\in\mathcal{L}(U;X_{-1})$ is $\mathrm{C}$-admissible for $r$. We note that for any 
$u\in\mathrm{C}([0,r];U)$ it holds that $Bu=A_{-1}\widetilde{u}$ with 
$\widetilde{u}\coloneqq A_{-1}^{-1}Bu\in\mathrm{C}([0,r];X)$. Thus we have 
$\Phi_{r}^{B}u=(T_{-1}\ast Bu)(r)=(T_{-1}\ast A_{-1}\widetilde{u})(r)\in X$ since 
$A_{-1}$ is $\mathrm{C}$-admissible for $r$, proving that $B$ is $\mathrm{C}$-admissible for $r$.

(a)$\Leftrightarrow$(b) This equivalence follows from \prettyref{prop:equiv_C_admiss} with $B\coloneqq A_{-1}$.
\end{proof}

The proof of the equivalence (b)$\Leftrightarrow$(c) in \prettyref{cor:equiv_C_admiss} is just an adaptation 
of the proof of \cite[Proposition 1.4, p.~146]{JacoSchwWint2022}. 
By combining \prettyref{thm:Cmax_reg_equiv_Cadm} and \prettyref{prop:equiv_C_admiss} we obtain 
the following description of $\mathrm{C}$-admissibility of an operator $B\in\mathcal{L}(U;X_{-1})$, 
which generalises \cite[Proposition 2.1 (i)$\Leftrightarrow$(ii), p.~4--5]{arora2025} where $X$ and $U$ 
are Banach spaces.

\begin{cor}\label{cor:adm_max_reg_bounded_semivar}
Let $r>0$, $X$ be a complete Hausdorff locally convex space and $(T(t))_{t\geq 0}$ a strongly continuous 
quasi-equicontinuous semigroup on $X$ with generator $A$ such that $0\in\rho(A)$.  
Let $U$ be a Hausdorff locally convex space and $B\in\mathcal{L}(U;X_{-1})$. 
Consider the following assertions.
\begin{enumerate}
\item[(a)] $(T(t))_{t\geq 0}$ satisfies $\mathrm{C}$-maximal regularity for $(A_{-1}^{-1}B,r)$.
\item[(b)] $(T\ast A_{-1}^{-1}Bf)(t)\in D(A)$ for all $t\in [0,r]$ and $A(T\ast A_{-1}^{-1}Bf)$ 
is right-continuous on $[0,r]$ for all $f\in \mathrm{C}([0,r];U)$. 
\item[(c)] $B$ is $\mathrm{C}$-admissible for $r$.
\item[(d)] $(T(t)A_{-1}^{-1}B)_{t\in [0,r]}$ is of bounded semivariation.
\end{enumerate}
Then we have $(a)\Rightarrow (b)$ and $(d)\Rightarrow(b)\Rightarrow (c)$. 
If $X$ is a $3\mathrm{C}_{U,A,r}^{A_{-1}^{-1}B}$-space, then $(c)\Rightarrow(d)$. 
If 
\begin{enumerate}
\item[(i)] $U$ is an invariant subspace of $X$ and $A_{-1}^{-1}B$ commuting with $(T(t))_{t\geq 0}$, or
\item[(ii)] $X$ is a generalised Schwartz space,
\end{enumerate} 
then $(d)\Rightarrow (a)$.
\end{cor}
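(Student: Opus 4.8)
The plan is to reduce everything to \prettyref{thm:Cmax_reg_equiv_Cadm} applied to the control operator $\widetilde{B}\coloneqq A_{-1}^{-1}B$. First I would observe that since $A_{-1}\colon (X,\tau)\to(X_{-1},\tau_{-1})$ is a topological isomorphism and $B\in\mathcal{L}(U;X_{-1})$, the composition $\widetilde{B}=A_{-1}^{-1}B$ belongs to $\mathcal{L}(U;X)$, so that $(T(t))_{t\geq 0}$, $\widetilde{B}$ and $X$ (which is in particular sequentially complete and on which $(T(t))_{t\geq 0}$ is locally equicontinuous, being quasi-equicontinuous) satisfy the hypotheses of \prettyref{thm:Cmax_reg_equiv_Cadm}. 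Under this substitution, assertions (a), (b) and (d) of the corollary are verbatim assertions (a), (b) and (d) of \prettyref{thm:Cmax_reg_equiv_Cadm} for $\widetilde{B}$, and conditions (i) and (ii) likewise match the ones stated there (with $\widetilde{B}$ in place of $B$).

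The only assertion requiring translation is (c). By \prettyref{prop:equiv_C_admiss}, $B$ is $\mathrm{C}$-admissible for $r$ if and only if $(T\ast\widetilde{B}f)(t)\in D(A)$ for all $f\in\mathrm{C}([0,r];U)$ and all $t\in[0,r]$. On the other hand, \prettyref{prop:adm_cmax_for_some_r_for_all}~(a), applied to the semigroup $(T(t))_{t\geq 0}$ and the operator $\widetilde{B}\in\mathcal{L}(U;X)$, shows that $(T\ast\widetilde{B}f)(r)\in D(A)$ for all $f\in\mathrm{C}([0,r];U)$ already forces $(T\ast\widetilde{B}f)(t)\in D(A)$ for all such $f$ and all $t\in[0,r]$; the converse is trivial. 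Combining these two facts, assertion (c) of the corollary is equivalent to assertion (c) of \prettyref{thm:Cmax_reg_equiv_Cadm} for $\widetilde{B}$, namely that $(T\ast\widetilde{B}f)(r)\in D(A)$ for all $f\in\mathrm{C}([0,r];U)$.

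With this dictionary in hand, the implications $(a)\Rightarrow(b)$ and $(d)\Rightarrow(b)\Rightarrow(c)$, the implication $(c)\Rightarrow(d)$ under the $3\mathrm{C}_{U,A,r}^{\widetilde{B}}$-hypothesis, and the implication $(d)\Rightarrow(a)$ under condition (i) or (ii) all follow at once from the corresponding statements in \prettyref{thm:Cmax_reg_equiv_Cadm}. One small point to keep track of is that the notion of $X$ being a $3\mathrm{C}_{U,A,r}^{\widetilde{B}}$-space presupposes $\Psi_{r}^{\widetilde{B}}(f)=(T\ast\widetilde{B}f)(r)\in D(A)$ for every $f$; this is harmless, since in the implication $(c)\Rightarrow(d)$ we assume (c), which by the above is precisely this condition, so the hypothesis is meaningful exactly where it is invoked. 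I do not expect any genuine obstacle: the mathematical content is carried entirely by \prettyref{thm:Cmax_reg_equiv_Cadm}, \prettyref{prop:equiv_C_admiss} and \prettyref{prop:adm_cmax_for_some_r_for_all}~(a), and the proof amounts to matching notation carefully.
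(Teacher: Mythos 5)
Your proposal is correct and follows essentially the same route as the paper, which states the corollary as a direct consequence of combining \prettyref{thm:Cmax_reg_equiv_Cadm} (applied with the control operator $A_{-1}^{-1}B\in\mathcal{L}(U;X)$) with \prettyref{prop:equiv_C_admiss} to translate $\mathrm{C}$-admissibility of $B$ into assertion (c) of the theorem. Your additional invocation of \prettyref{prop:adm_cmax_for_some_r_for_all}~(a) and the remark about the $3\mathrm{C}_{U,A,r}^{A_{-1}^{-1}B}$-hypothesis being meaningful under (c) are harmless and consistent with the paper's reasoning.
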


We note that a complete Hausdorff locally convex space is a generalised Schwartz space if and only if it is a 
semi-Montel space by \cite[3.5.1 Theorem, p.~64]{jarchow1981}. 
As a consequence of the preceding corollary we get by the choice $B\coloneqq A_{-1}$ 
that $\mathrm{C}$-maximal regularity of a semigroup 
with generator $A$ and $\mathrm{C}$-admissibility of $A_{-1}$ are equivalent 
for strongly continuous quasi-equicontinuous semigroups on complete $3\mathrm{C}_{A,r}$-spaces $X$. 
In the case that $X$ is a Banach space this has already been observed in 
\cite[Proposition 2.2, p.~146]{JacoSchwWint2022}.

\begin{cor}\label{cor:adm_A_-1_max_reg_bounded_semivar}
Let $r>0$, $X$ be a complete Hausdorff locally convex space and $(T(t))_{t\geq 0}$ a strongly continuous 
quasi-equicontinuous semigroup on $X$ with generator $A$ such that $0\in\rho(A)$. 
Consider the following assertions.
\begin{enumerate}
\item[(a)] $(T(t))_{t\geq 0}$ satisfies $\mathrm{C}$-maximal regularity for $r$.
\item[(b)] $A_{-1}$ is $\mathrm{C}$-admissible for $r$.
\item[(c)] $(T(t))_{t\in [0,r]}$ is of bounded semivariation.
\end{enumerate}
Then we have $(c)\Rightarrow(a)\Rightarrow (b)$. 
If $X$ is a $3\mathrm{C}_{A,r}$-space, then $(b)\Rightarrow(c)$.
\end{cor}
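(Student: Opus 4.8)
The plan is to read this corollary off as the special case $U := X$, $B := A_{-1}\in\mathcal{L}(X;X_{-1})$ of \prettyref{cor:adm_max_reg_bounded_semivar}. With this choice one has $A_{-1}^{-1}B = \id_{X}$, so that assertion (a) of \prettyref{cor:adm_max_reg_bounded_semivar} ($\mathrm{C}$-maximal regularity for $(A_{-1}^{-1}B,r)$) is precisely our assertion (a) ($\mathrm{C}$-maximal regularity for $r$), assertion (c) there ($B$ is $\mathrm{C}$-admissible for $r$) is our assertion (b), and assertion (d) there ($(T(t)A_{-1}^{-1}B)_{t\in[0,r]}$ of bounded semivariation) is our assertion (c). Likewise, the notion of a $3\mathrm{C}_{X,A,r}^{\id}$-space is by definition that of a $3\mathrm{C}_{A,r}$-space. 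The hypotheses needed for \prettyref{cor:adm_max_reg_bounded_semivar} --- $X$ complete, $(T(t))_{t\geq 0}$ strongly continuous and quasi-equicontinuous with $0\in\rho(A)$ --- are exactly those assumed here.

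The one point deserving a short word is that condition (i) of \prettyref{cor:adm_max_reg_bounded_semivar} is automatically in force in this situation: $U = X$ is trivially an invariant subspace of $X$ w.r.t.~$(T(t))_{t\geq 0}$, and $A_{-1}^{-1}B = \id$ obviously commutes with every $T(t)$. Consequently, the implication $(d)\Rightarrow(a)$ of \prettyref{cor:adm_max_reg_bounded_semivar} applies unconditionally and yields $(c)\Rightarrow(a)$. The chain $(a)\Rightarrow(b)\Rightarrow(c)$ of \prettyref{cor:adm_max_reg_bounded_semivar} then gives $(a)\Rightarrow(b)$; alternatively, one may argue directly that $(a)$ in particular forces $(T\ast f)(t)\in D(A)$ for all $t\in[0,r]$ and $f\in\mathrm{C}([0,r];X)$, whence $(b)$ follows from \prettyref{cor:equiv_C_admiss}. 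Finally, under the extra hypothesis that $X$ is a $3\mathrm{C}_{A,r}$-space, the implication $(c)\Rightarrow(d)$ of \prettyref{cor:adm_max_reg_bounded_semivar} translates into $(b)\Rightarrow(c)$.

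Since all the analytic substance has already been carried out in \prettyref{thm:Cmax_reg_equiv_Cadm}, \prettyref{prop:equiv_C_admiss} and \prettyref{cor:adm_max_reg_bounded_semivar}, there is no genuine obstacle in this final step; the only care required is the bookkeeping of the identifications (that $A_{-1}^{-1}A_{-1}=\id$ and hence $3\mathrm{C}_{X,A,r}^{\id}=3\mathrm{C}_{A,r}$) together with the trivial verification that the commutativity hypothesis (i) is met for $B=A_{-1}$.
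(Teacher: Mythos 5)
Your proposal is correct and coincides with the paper's own derivation: the corollary is obtained precisely by specialising \prettyref{cor:adm_max_reg_bounded_semivar} to $U=X$, $B=A_{-1}$, so that $A_{-1}^{-1}B=\id$ makes condition (i) there trivially satisfied and identifies the $3\mathrm{C}_{X,A,r}^{\id}$-property with the $3\mathrm{C}_{A,r}$-property. No gaps; your bookkeeping of the identifications is exactly what the paper intends.
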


\begin{exa}
(a) Let $(T(t))_{t\geq 0}$ be a strongly continuous quasi-equicontinuous semigroup on a complete Hausdorff 
locally convex space $X$ with generator $A\in\mathcal{L}(X)$ such that $0\in\rho(A)$. 
Then $X_{-1}=X$, $T_{-1}(t)=T(t)$ for $t\geq 0$ and $A_{-1}=A$ 
is $\mathrm{C}$-admissible for every $r>0$ by \prettyref{ex:sg_bounded_semivar} (a) 
and \prettyref{cor:adm_A_-1_max_reg_bounded_semivar}.

(b) Let $(T_{0}(t))_{t\geq 0}$ be the strongly continuous multiplication semigroup 
on the Banach space $(c_0,\|\cdot\|_{\infty})$ from \prettyref{ex:sg_bounded_semivar} (b) 
with generator $(A_{0},D(A_{0}))$ and $\K=\C$.
Then $0\in\rho(A_{0})$, 
$
(c_{0})_{-1}=\{x\in\C^{\N}\;|\;(\tfrac{1}{n}x_{n})_{n\in\N}\in c_{0}\} 
$
and $(T_{0})_{-1}(t)x=(\e^{-nt}x_{n})_{n\in\N}$ for $x\in(c_{0})_{-1}$ and $t\geq 0$ 
as well as $(A_{0})_{-1}x=(-nx_{n})_{n\in\N}$ for $x\in c_{0}$ by \cite[Example 1, p.~456]{wegner2014}. 
Due to \prettyref{ex:sg_bounded_semivar} (b) and \prettyref{cor:adm_A_-1_max_reg_bounded_semivar} 
$(A_{0})_{-1}$ is $\mathrm{C}$-admissible for every $r>0$ (cf.~\cite[Example 2.3, p.~147--148]{JacoSchwWint2022}).

(c) Let $(T(t))_{t\geq 0}$ be the strongly continuous quasi-equicontinuous multiplication semigroup on 
the complete $3\mathrm{C}$-space $(\ell^{\infty},\beta_{0})$ from \prettyref{ex:sg_bounded_semivar} (c) 
with generator $A$ and $\K=\C$. We note that $\rho(A)=\C\setminus \overline{\{m_{n}\;|\;n\in\N\}}$ by 
\cite[Example 3.14]{kruse_seifert2022b}.
Since $\sup_{n\in\N}m_{n}<0$, we have $0\in\rho(A)$. 
Then similarly to \cite[Example 1, p.~456]{wegner2014} 
(or using \cite[Theorem 2.15, p.~330]{budde2019} and the fact that the semigroup is bi-continuous) we have
$
\ell^{\infty}_{-1}=\{x\in\C^{\N}\;|\;(\tfrac{1}{m_{n}}x_{n})_{n\in\N}\in\ell^{\infty}\} 
$
and $T_{-1}(t)x=(\e^{m_{n}t}x_{n})_{n\in\N}$ for $x\in\ell^{\infty}_{-1}$ and $t\geq 0$ 
as well as $A_{-1}x=(m_{n}x_{n})_{n\in\N}$ for $x\in \ell^{\infty}$. 
Due to \prettyref{ex:sg_bounded_semivar} (c) and \prettyref{cor:adm_A_-1_max_reg_bounded_semivar} $A_{-1}$ is 
$\mathrm{C}$-admissible for every $r>0$.
\end{exa}

\appendix
\section{\texorpdfstring{$3\mathrm{C}_{U}$}{3CU}-spaces and \texorpdfstring{$3\mathrm{C}$}{3C}-spaces}
\label{app:C_spaces}

Let $\Omega$ be a completely regular Hausdorff space (see \cite[Definition 11.1, p.~180]{james1999}) and 
$X$ and $U$ Hausdorff locally convex spaces. We denote by $\tau_{\operatorname{co}}$ the \emph{compact-open topology} on 
the space $\mathrm{C}(\Omega;U)$, i.e.~the topology of uniform convergence on compact subsets of $\Omega$. 
In this section we are interested in the following question. 
For which combination of spaces $\Omega$, $X$ and $U$ is every closed linear operator 
$C\colon (\mathrm{C}(\Omega;U),\tau_{\operatorname{co}})\to X$ already
continuous? In the case $\Omega=[0,r]$ for some $r>0$ the admissible spaces $X$ for an affirmative answer 
to this question are examples of $3\mathrm{C}_{U}$-spaces.

In order to answer this question we need to recall some notions from general topology and the theory of 
locally convex spaces. We restrict our recall to the lesser known notions. 
For the notions we do not explain we refer the reader to \cite{engelking1989} in the 
case of general topology and again to \cite{jarchow1981,kaballo2014,meisevogt1997,bonet1987} in the case of 
locally convex spaces. 

A completely regular space $\Omega$ is called a \emph{$k_{\R}$-space} if any map $f\colon\Omega\to\R$ 
whose restriction to each compact $K\subset\Omega$ is continuous, is already continuous on $\Omega$ 
(see \cite[p.~487]{michael1973}). Examples of Hausdorff $k_{\R}$-spaces are locally compact Hausdorff spaces by 
\cite[3.3.1 Theorem, p.~148]{engelking1989} and \cite[p.~152]{engelking1989} as well as metrisable spaces 
by \cite[Proposition 11.5, p.~181]{james1999} and \cite[3.3.20, 3.3.21 Theorems, p.~152]{engelking1989}. 
A subset $B$ of a completely regular Hausdorff space $\Omega$ is called \emph{bounding} if $f(B)$ is 
bounded in $\K$ for every $f\in\mathrm{C}(\Omega)$ (see \cite[Definition 10.1.16, p.~373]{bonet1987}). 
A completely regular Hausdorff space $\Omega$ is called a \emph{$\mu$-space} if every bounding subset of $\Omega$ is 
relatively compact (see \cite[Definition 10.1.18, p.~374]{bonet1987}). 
$\Omega$ is called a \emph{$\mu$-$k_{\R}$-space} if $\Omega$ is a $\mu$-space and a $k_{\R}$-space. 
In particular, every realcompact Hausdorff space (see \cite[p.~214]{engelking1989}) is a $\mu$-space 
by \cite[Observation 10.1.19, p.~374]{bonet1987}. 
Thus every (regular) Lindel\"{o}f Hausdorff space (see \cite[p.~192]{engelking1989} where regularity is included 
in the definition) is a realcompact Hausdorff space by \cite[3.11.12 Theorem, p.~216]{engelking1989}. 
Second countable regular spaces are Lindel\"of by \cite[3.8.1 Theorem, p.~192]{engelking1989}. 
Hence separably metrisable spaces are Hausdorff $\mu$-$k_{\R}$-spaces by \cite[4.1.16 Corollary, p.~256]{engelking1989}.
Further, by \cite[3.8.5 Theorem, p.~192--193]{engelking1989} every regular $\sigma$-compact 
space is Lindel\"of. Thus $\sigma$-compact locally compact Hausdorff spaces are Hausdorff $\mu$-$k_{\R}$-spaces, too. 
We also recall again that a Hausdorff locally convex space $X$ is called a \emph{Mackey space} if its topology coincides 
with the Mackey topology $\mu(X,X')$. 

\begin{prop}\label{prop:cont_space_mackey}
Let $\Omega$ be a Hausdorff $\mu$-$k_{\R}$-space and $U$ a complete Mackey space. 
Then $(\mathrm{C}(\Omega;U),\tau_{\operatorname{co}})$ is a complete Mackey space.
\end{prop}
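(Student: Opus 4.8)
The plan is to prove completeness and the Mackey property separately, both via known results about the compact-open topology on spaces of continuous vector-valued functions.

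For completeness, I would argue as follows. A net $(f_\iota)$ that is Cauchy in $(\mathrm{C}(\Omega;U),\tau_{\operatorname{co}})$ is in particular, for each $x\in\Omega$, a Cauchy net in $U$ (evaluation at $x$ is continuous), so by completeness of $U$ it converges pointwise to some function $f\colon\Omega\to U$. The Cauchy condition is uniform on each compact $K\subset\Omega$, so $f_\iota\to f$ uniformly on compacta, and hence the restriction $f_{\mid K}$ is continuous for every compact $K$ (being a uniform limit of continuous functions on $K$). Now the $k_{\R}$-property of $\Omega$ enters: since every $u'\in U'$ is continuous, $u'\circ f\colon\Omega\to\K$ has continuous restriction to each compact set, hence is continuous on $\Omega$; thus $f$ is weakly continuous, i.e.\ continuous into $(U,\sigma(U,U'))$. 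To upgrade this to continuity into $(U,\tau_U)$, I would invoke that a weakly continuous map whose range is relatively weakly compact is continuous when $U$ carries the Mackey topology — but here the range need not be relatively compact, so instead I would use the $\mu$-space property: the range $f(\Omega)$ is bounded in $U$ (it is $\sigma(U,U')$-bounded, hence bounded), and more to the point, for each compact $K$ the set $f(K)$ is compact in $U$ (continuous image of compact, using $f_{\mid K}$ continuous into $\tau_U$, which we already have from uniform convergence on $K$). So $f_{\mid K}\in\mathrm{C}(K;U)$ for each compact $K$, and I then want the conclusion that $f\in\mathrm{C}(\Omega;U)$; this is exactly a $k_{\R}$-type statement for vector-valued maps, which follows from the scalar $k_{\R}$-property of $\Omega$ combined with the fact that $U$ is a Mackey space (so its topology is determined by $U'$, reducing vector-valued continuity to scalar continuity through the already-established local compactness of images). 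This gives $f\in\mathrm{C}(\Omega;U)$ and $f_\iota\to f$ in $\tau_{\operatorname{co}}$, proving completeness.

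For the Mackey property, the natural route is a duality description of $(\mathrm{C}(\Omega;U),\tau_{\operatorname{co}})$. When $\Omega$ is a $\mu$-space the bounding subsets coincide with the relatively compact ones, so the compact-open topology equals the topology of uniform convergence on bounding sets, which is the natural ``strict/bounded'' setting where the dual can be identified (via an integral representation, e.g.\ with $U'$-valued measures of compact support). Then I would cite or adapt a Mackey--Arens argument: one checks that the polars of the defining seminorm balls (products of a compact $K\subset\Omega$ and a $\tau_U$-neighbourhood, equivalently an equicontinuous set in $U'$) are $\sigma((\mathrm{C}(\Omega;U))',\mathrm{C}(\Omega;U))$-compact. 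Equicontinuity in $U'$ gives $\sigma(U',U)$-compactness of the relevant sets in $U'$ (since $U$ is Mackey, equicontinuous $=$ contained in a weakly compact absolutely convex set), and compactness of $K$ plus a tightness/Prokhorov-type argument promotes this to weak-$*$ compactness of the corresponding sets of measures. Hence $\tau_{\operatorname{co}}$ is a topology of uniform convergence on a family of $\sigma(E',E)$-compact sets, so by Mackey--Arens it is coarser than $\mu(E,E')$ where $E=\mathrm{C}(\Omega;U)$; since $\tau_{\operatorname{co}}$ is a priori compatible with the duality, it must equal the Mackey topology.

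The main obstacle I expect is the Mackey-property half, specifically establishing $\sigma(E',E)$-compactness of the polars — this is where the interplay of the $\mu$-space hypothesis on $\Omega$ (to control the "mass at infinity" of representing measures) and the Mackey hypothesis on $U$ (to get weak-$*$ compactness of equicontinuous sets of functionals) is genuinely used, and it likely requires either an explicit integral representation of the dual $E'$ or a reduction, via the canonical isomorphism $\mathrm{C}(\Omega;U)\cong$ (a suitable subspace of) $\mathrm{C}(\Omega)\,\widehat\otimes\,U$ in the $k_{\R}$-setting, to the scalar-valued theory where such representations are classical. Completeness, by contrast, should be essentially routine once the $k_{\R}$-property is used to pass from compact-wise continuity to global continuity.
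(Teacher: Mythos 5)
The Mackey half of your argument is where the proposal breaks down. Polars of $0$-neighbourhoods are \emph{always} $\sigma(E',E)$-compact by Alaoglu--Bourbaki, so establishing their weak-$*$ compactness buys nothing; and Mackey--Arens then only yields $\tau_{\operatorname{co}}\subseteq\mu(E,E')$, which holds for \emph{every} topology compatible with the duality and does not give equality (the weak topology is compatible and coarser than $\mu(E,E')$, yet is usually not the Mackey topology). What must actually be shown is the converse direction: every absolutely convex $\sigma(E',E)$-compact subset of $E'$ is $\tau_{\operatorname{co}}$-equicontinuous, and your proposal never addresses this. This is exactly where the $\mu$-space hypothesis has to enter in a strong form. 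The paper's route is different and avoids any dual representation: by Bierstedt's theorem (applicable since $\Omega$ is a $k_{\R}$-space and $U$ is complete) one has $(\mathrm{C}(\Omega;U),\tau_{\operatorname{co}})\cong(\mathrm{C}(\Omega),\tau_{\operatorname{co}})\varepsilon U$; the Nachbin--Shirota-type results give that $(\mathrm{C}(\Omega),\tau_{\operatorname{co}})$ is complete and \emph{barrelled} because $\Omega$ is a $\mu$-$k_{\R}$-space, hence Mackey; and then one cites that an $\varepsilon$-product of two complete Mackey spaces is again a complete Mackey space. Your tightness/measure-representation idea is morally how barrelledness of $(\mathrm{C}(\Omega),\tau_{\operatorname{co}})$ is proved, but as written your deduction proves only the trivial inclusion.

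The completeness half is essentially sound in outline, but the justification of the key step is wrong: weak continuity of $f$ (continuity of $u'\circ f$ for all $u'\in U'$) does not imply continuity into $(U,\tau_U)$, and the Mackey property of $U$ is of no help here --- it says nothing about continuity of the identity from $(U,\sigma(U,U'))$ to $(U,\mu(U,U'))$. The correct (and simpler) passage from ``$f_{\mid K}$ continuous for every compact $K$'' to ``$f$ continuous'' composes with seminorms rather than functionals: for fixed $x_{0}\in\Omega$ and a continuous seminorm $p$ on $U$, the real-valued function $x\mapsto p(f(x)-f(x_{0}))$ is continuous on every compact subset of $\Omega$, hence continuous on $\Omega$ by the scalar $k_{\R}$-property, and it vanishes at $x_{0}$; this gives continuity of $f$ at $x_{0}$ using only completeness of $U$ and the $k_{\R}$-property, with no detour through weak continuity, the Mackey property, or the $\mu$-space assumption.
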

\begin{proof}
Due to \cite[2.4 Theorem, p.~138--139]{bierstedt1973b} the space $(\mathrm{C}(\Omega;U),\tau_{\operatorname{co}})$ is 
topologically isomorphic to the $\varepsilon$-product $(\mathrm{C}(\Omega),\tau_{\operatorname{co}})\varepsilon U
\coloneqq\mathcal{L}_{e}((\mathrm{C}(\Omega),\tau_{\operatorname{co}})_{\kappa}';U)$ 
since $\Omega$ is a $k_{\R}$-space and $U$ complete 
where $(\mathrm{C}(\Omega),\tau_{\operatorname{co}})_{\kappa}'$ denotes the space 
$(\mathrm{C}(\Omega),\tau_{\operatorname{co}})'$ equipped with the topology of uniform convergence on the 
absolutely convex compact subsets of 
$(\mathrm{C}(\Omega),\tau_{\operatorname{co}})$ and 
$\mathcal{L}_{e}((\mathrm{C}(\Omega),\tau_{\operatorname{co}})_{\kappa}';U)$ denotes 
the space of continuous linear operators $\mathcal{L}((\mathrm{C}(\Omega),\tau_{\operatorname{co}})_{\kappa}';U)$ 
equipped with the topology of uniform convergence 
on the equicontinuous subsets of $(\mathrm{C}(\Omega),\tau_{\operatorname{co}})'$. 
The space $(\mathrm{C}(\Omega),\tau_{\operatorname{co}})$ is complete and barrelled by 
\cite[Theorems 10.1.20, 10.1.24 p.~374, 376]{bonet1987} since $\Omega$ is a 
$\mu$-$k_{\R}$-space. In particular, $(\mathrm{C}(\Omega),\tau_{\operatorname{co}})$ is a Mackey space by 
\cite[Chap.~IV, 3.4, p.~132]{schaefer1971}.
Therefore $(\mathrm{C}(\Omega),\tau_{\operatorname{co}})\varepsilon U$ 
is also a complete Mackey space by \cite[Satz 10.3, p.~234]{kaballo2014} and 
\cite[Corollary 6.4.12, p.~117]{kerjean2018} 
since it is the $\varepsilon$-product of two complete Mackey spaces. 
So $(\mathrm{C}(\Omega;U),\tau_{\operatorname{co}})$ is a complete Mackey space as well. 
\end{proof}

To answer our initial question we need some further notions. 
A topological space $\Omega$ is called \emph{hemicompact} if there is a sequence 
$(K_{n})_{n\in\N}$ of compact subsets of $\Omega$ such that for every compact set $K\subset\Omega$ there is $N\in\N$ 
with $K\subset K_{N}$ (see \cite[Exercises 3.4.E, p.~165]{engelking1989}). 
We note that every hemicompact space is $\sigma$-compact by \cite[Exercises 3.8.C (a), p.~194]{engelking1989} and 
for locally compact Hausdorff spaces both notions are equivalent by \cite[Exercises 3.8.C (b), p.~195]{engelking1989}.

Let $X$ and $Y$ be linear spaces such that $\langle X,Y\rangle$ is a dual pairing. 
We denote by $\sigma(X,Y)$ the \emph{weak topology} on $X$ w.r.t.~to $\langle X,Y\rangle$, 
i.e.~the topology of uniform convergence on finite subsets of $Y$, and by $\beta(X,Y)$ the 
\emph{strong topology} on $X$ w.r.t.~to $\langle X,Y\rangle$, i.e.~the topology of uniform convergence on 
$\sigma(Y,X)$-bounded subsets of $Y$ (see e.g.~\cite[p.~171, 174]{kaballo2014}). 
If $X$ is a Hausdorff locally convex space and $Y=X'$, then we set $X_{b}'\coloneqq (X',\beta(X',X))$.

A Hausdorff locally convex space $X$ is called a \emph{$B_{r}$-complete} space if every $\sigma(X',X)$-dense 
$\sigma^{f}(X',X)$-closed linear subspace of $X'$ equals $X'$ where $\sigma^{f}(X',X)$ is the finest topology 
coinciding with $\sigma(X',X)$ on all equicontinuous sets in $F'$ (see \cite[\S34, p.~26]{koethe1979}). 
A Hausdorff locally convex space $X$ is called \emph{$B$-complete} if every $\sigma^{f}(X',X)$-closed linear subspace 
of $X'$ is weakly closed. 
In particular, $B$-complete spaces are clearly $B_{r}$-complete, and $B_{r}$-complete spaces are complete 
by \cite[\S34, 2.(1), p.~26]{koethe1979}.
The definitions of $B_{r}$- and $B$-complete spaces above are equivalent to the original definitions 
by Pt\'{a}k \cite[Definitions 2, 5, p.~50, 55]{ptak1958} due to \cite[\S34, 2.(2), p.~26--27]{koethe1979} 
and we note that such spaces are also called \emph{infra-Pt\'{a}k spaces} and \emph{Pt\'{a}k spaces} (\emph{fully complete spaces}), respectively.

Let $X$ be a Hausdorff locally convex space and $\theta$ be the set $\theta_{bs}$ of bounded sequences in 
$(X^{\ast},\sigma(X^{\ast},X))$ or the set $\theta_{c_{0}}$ of null sequences in $(X^{\ast},\sigma(X^{\ast},X))$ where 
$X^{\ast}$ denotes the algebraic dual of $X$. For linear subspaces $S$ and $M$ of $X^{\ast}$ we set 
\begin{align*}
\dhat{S}^{\theta}&\coloneqq\{y\in X^{\ast}\;|\;\exists\,(y_{n})_{n\in\N}\text{ in }S:\;(y_{n})_{n\in\N}\in\theta,
\;\sigma(X^{\ast},X)\text{-}\lim_{n\to\infty}y_{n}=y\}\\
\uhat{M}^{\theta}&\coloneqq\bigcap\{S\;|\;M\subset S,\;S\text{ linear subspace of }X^{\ast},\;S=\dhat{S}^{\theta}\}.
\end{align*}
$X$ is called a \emph{$\theta_{r}$-space} if ${\uhat{M\cap X'}^{\phantom{\cdot}}}^{\hspace{-0.05cm}\theta}=X'$ for each linear subspace $M$ of 
$X^{\ast}$ such that $M\cap X'$ is $\sigma(X',X)$-dense in $X'$ (see \cite[p.~505, 507]{boos1997}). 
If $\theta=\theta_{bs}$, then a $\theta_{r}$-space is also called an 
\emph{$L_{r}$-space} (see \cite[p.~17]{boos1993}, \cite[p.~508]{boos1997} and \cite[Definition 2, p.~390]{qiu1985}).

A Hausdorff locally convex space $X$ is called \emph{(quasi-)$c_{0}$-barrelled} if 
any $\sigma(X',X)$-null sequence ($\beta(X',X)$-null sequence) in $X'$ is equicontinuous 
(see \cite[p.~249]{jarchow1981}). $c_{0}$-barrelled spaces are also called \emph{sequentially barrelled} 
(see \cite[Definition, p.~353]{webb1968}). 
$X$ is called a \emph{gDF-space} if it is quasi-$c_{0}$-barrelled and has a countable basis of bounded sets 
(see \cite[p.~257]{jarchow1981}). $X'$ is called \emph{weakly sequentially complete} if $(X',\sigma(X',X))$ 
is sequentially complete.

We also recall the definition of the mixed topology on a Saks space (see \cite[Section 2.1]{wiweger1961}, 
\cite[I.3.2 Definition, p.~27--28]{cooper1978} and \cite[Definition 2.2, p.~3]{kruse_schwenninger2022}).
Let $(X,\|\cdot\|)$ be a normed space and $\tau$ a Hausdorff locally convex topology on $X$ that is coarser 
than the $\|\cdot\|$-topology $\tau_{\|\cdot\|}$. Then the \emph{mixed topology} 
$\gamma \coloneqq \gamma(\|\cdot\|,\tau)$ is defined as the finest linear topology on $X$ that coincides with 
$\tau$ on $\|\cdot\|$-bounded sets and such that $\tau\leq \gamma \leq \tau_{\|\cdot\|}$. 
The mixed topology $\gamma$ is Hausdorff locally convex and 
our definition is equivalent to the one from the literature \cite[Section 2.1]{wiweger1961} 
due to \cite[Lemmas 2.2.1, 2.2.2, p.~51]{wiweger1961}.
The triple $(X,\|\cdot\|,\tau)$ is called a \emph{Saks space} if there exists fundamental system of seminorms 
$\Gamma_{\tau}$ of $(X,\tau)$ such that
\[
\|x\|=\sup_{p\in\Gamma_{\tau}} p(x), \quad x\in X.
\]
We say that a Saks space $(X,\|\cdot\|,\tau)$ is a \emph{semireflexive (Mackey--)Saks space} if $(X,\gamma)$ 
is a semireflexive (Mackey) space. Examples of semireflexive Mackey--Saks spaces are given in 
\cite[Corollary 5.6, p.~269]{kruse_schwenninger2024}.

\begin{prop}\label{prop:C_spaces}
Let $\Omega$ be a completely regular Hausdorff space and $X$ and $U$ Hausdorff locally convex spaces. 
Then any closed linear operator $C\colon (\mathrm{C}(\Omega;U),\tau_{\operatorname{co}})\to X$ is 
continuous if one of the following conditions is fulfilled.
\begin{enumerate}
\item[(i)] $(\mathrm{C}(\Omega;U),\tau_{\operatorname{co}})$ is ultrabornological and $X$ webbed. 
In particular, the first condition is fulfilled if $\Omega$ is a hemicompact $k_{\R}$-space and $U$ a Fr\'echet space.
\item[(ii)] $\Omega$ contains an infinite compact subset, $U$ is a barrelled gDF-space and $X$ $B_{r}$-complete. 
\item[(iii)] $\Omega$ is a $\mu$-$k_{\R}$-space such that the countable union of compact subsets of $\Omega$ 
is relatively compact, $U$ a complete Mackey gDF space and $X$ a semireflexive gDF-space. 
\item[(iv)] $\Omega$ is a $\mu$-$k_{\R}$-space, $(\mathrm{C}(\Omega;U),\tau_{\operatorname{co}})'$ 
weakly sequentially complete, $U$ a complete Mackey space and $X$ an $L_{r}$-space. 
\item[(v)] $\Omega$ is a $\mu$-$k_{\R}$-space, $(\mathrm{C}(\Omega;U),\tau_{\operatorname{co}})$ $c_0$-barrelled, 
$U$ a complete Mackey space and $X$ a $\theta_{r}$-space for $\theta=\theta_{c_{0}}$. 
\end{enumerate}
\end{prop}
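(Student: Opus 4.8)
The plan is to read each of the five items as an instance of a classical closed-graph theorem, observing that the five conditions imposed on the range space $X$ --- webbed, $B_{r}$-complete (infra-Pt\'ak), semireflexive gDF, $L_{r}$-space, $\theta_{c_{0}}$-$r$-space --- are precisely the target classes of such theorems, each paired with a companion class of admissible domain spaces --- ultrabornological, barrelled, gDF, spaces with weakly sequentially complete dual, $c_{0}$-barrelled. Since $X$ is assumed to lie in the relevant target class, the actual work in every case is to verify that $(\mathrm{C}(\Omega;U),\tau_{\operatorname{co}})$ belongs to the companion domain class. For that I would lean on two structural inputs: the $\varepsilon$-product representation $(\mathrm{C}(\Omega;U),\tau_{\operatorname{co}})\cong(\mathrm{C}(\Omega),\tau_{\operatorname{co}})\,\varepsilon\,U$, valid for $k_{\R}$-spaces $\Omega$ and complete $U$ and already used in the proof of \prettyref{prop:cont_space_mackey}; and the classical descriptions of the scalar space $(\mathrm{C}(\Omega),\tau_{\operatorname{co}})$ --- complete and barrelled for $\mu$-$k_{\R}$-spaces, Fr\'echet for hemicompact $k_{\R}$-spaces, and (g)DF under the $\sigma$-compactness-type hypothesis of (iii). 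The corresponding permanence of the classes ``ultrabornological'', ``barrelled'', ``(g)DF'' and ``complete Mackey'' under $\varepsilon$-products then transports the structure from $\mathrm{C}(\Omega)$ to $\mathrm{C}(\Omega;U)$.

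Carrying this out: for (i) I would apply De Wilde's closed-graph theorem (ultrabornological domain, webbed range); the ``in particular'' clause follows since a hemicompact $k_{\R}$-space $\Omega$ makes $\mathrm{C}(\Omega;U)$ metrisable (a cofinal sequence of compacta yields a countable fundamental system of seminorms) and complete (a function continuous on every compactum is continuous, as $\Omega$ is $k_{\R}$, and $U$ is complete), hence Fr\'echet, hence ultrabornological. For (ii) I would invoke Pt\'ak's closed-graph theorem (barrelled domain, $B_{r}$-complete range) and show that $(\mathrm{C}(\Omega;U),\tau_{\operatorname{co}})$ is barrelled when $U$ is a barrelled gDF-space and $\Omega$ carries an infinite compact subset, by reducing to the scalar case and transferring barrelledness along the $\varepsilon$-product. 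For (iii) I would use a closed-graph theorem with a semireflexive gDF-space as range, after checking that the hypothesis on $\Omega$ forces $(\mathrm{C}(\Omega),\tau_{\operatorname{co}})$, and then $(\mathrm{C}(\Omega),\tau_{\operatorname{co}})\,\varepsilon\,U$, to be a gDF-space. For (iv) and (v) the domain-side hypotheses ($\mathrm{C}(\Omega;U)'$ weakly sequentially complete, resp.\ $\mathrm{C}(\Omega;U)$ $c_{0}$-barrelled) are assumed outright, so the only preparation is to record via \prettyref{prop:cont_space_mackey} that $(\mathrm{C}(\Omega;U),\tau_{\operatorname{co}})$ is a complete Mackey space --- the standing requirement in the Kalton-type closed-graph theorems for $L_{r}$-spaces and for $\theta_{c_{0}}$-$r$-spaces --- and then quote the appropriate one.

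The step I expect to be the main obstacle is (ii) together with (iii): matching the purely topological hypothesis on $\Omega$ (an infinite compact subset; respectively, relative compactness of every countable union of compacta) with the precise permanence statements needed to conclude that $(\mathrm{C}(\Omega;U),\tau_{\operatorname{co}})$ is barrelled, respectively a gDF-space, given only that $U$ is a barrelled gDF-space, respectively a complete Mackey gDF-space. Cases (i), (iv) and (v) are essentially bookkeeping once the correct closed-graph theorem is cited, whereas (ii) and (iii) require the genuinely space-theoretic argument; I would treat these by first settling the scalar space $\mathrm{C}(\Omega)$ and then lifting along the $\varepsilon$-product.
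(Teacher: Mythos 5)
Your overall architecture is the same as the paper's: identify the five items with pairs (domain class, range class) of a closed graph theorem, note that $X$ is assumed to lie in the range class, and reduce everything to showing that $(\mathrm{C}(\Omega;U),\tau_{\operatorname{co}})$ lies in the corresponding domain class; for (i), (iv) and (v) your treatment is essentially identical to the paper's (de Wilde; Qiu's theorem for $L_{r}$-spaces; Boos's theorem for $\theta_{c_{0}}$-spaces, with \prettyref{prop:cont_space_mackey} supplying the complete Mackey property, and with metrisability plus completeness giving the Fr\'echet property in the hemicompact case). The genuine gap is in (ii), precisely the step you flag as the main obstacle but propose to settle by ``reducing to the scalar case and transferring barrelledness along the $\varepsilon$-product''. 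Barrelledness is \emph{not} preserved under $\varepsilon$-products: already for an infinite compact $K$ and a barrelled space $E$, the space $\mathrm{C}(K;E)\cong \mathrm{C}(K)\varepsilon E$ may fail to be barrelled. The correct mechanism (and the one the paper uses, via the Mendoza-type theorem quoted from P\'erez Carreras--Bonet, 11.10.1) is that barrelledness of the vector-valued space requires, beyond barrelledness of the factors, that $U_{b}'$ be fundamentally-$\ell^{1}$-bounded (Pietsch's property $(B)$). This is exactly where the gDF hypothesis on $U$ enters: $U$ gDF forces $U_{b}'$ to be Fr\'echet, hence to have property $(B)$. Your outline never uses the gDF assumption in (ii) and supplies no substitute for the property-$(B)$ argument, so the claimed permanence step is not available and the case does not close as written.

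Case (iii) has a milder version of the same issue: the paper obtains the gDF property of $(\mathrm{C}(\Omega;U),\tau_{\operatorname{co}})$ from a specific theorem of Schmets on vector-valued continuous function spaces (under the hypothesis that countable unions of compacta are relatively compact and $U$ is gDF), not from a general permanence of ``gDF'' under $\varepsilon$-products; if you insist on the scalar-plus-$\varepsilon$-product route you must cite or prove such a permanence statement, and you must also keep track of the complete Mackey property of the domain (from \prettyref{prop:cont_space_mackey}), since the closed graph theorem invoked there (McIntosh) needs both strong duals to be Fr\'echet and is not simply ``gDF domain into semireflexive gDF range''. So: same global strategy as the paper, (i)/(iv)/(v) fine, but (ii) rests on a false permanence principle and (iii) on an unproved one.
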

\begin{proof}
(i) The first part follows from the \cite[Closed graph theorem 24.31, p.~289]{meisevogt1997} of de Wilde. 

If $\Omega$ is a hemicompact $k_{\R}$-space and $U$ a Fr\'echet space, 
then $(\mathrm{C}(\Omega;U),\tau_{\operatorname{co}})$ is also a Fr\'echet space by \cite[p.~53--54]{hollstein1982}, 
thus ultrabornological by \cite[Remark 24.15 (c), p.~283]{meisevogt1997}.

(ii) Since $U$ is a gDF-space, $U_{b}'$ is a Fr\'echet space by \cite[12.4.2 Theorem, p.~258]{jarchow1981} and thus 
$U_{b}'$ has property $(B)$ of Pietsch by \cite[1.5.8 Theorem, p.~31]{pietsch1972}, which is also called fundamental-$\ell^{1}$-boundedness in \cite[Definition 4.8.2 (ii), p.~139--140]{bonet1987}. 
Therefore $(\mathrm{C}(\Omega;U),\tau_{\operatorname{co}})$ 
is barrelled by \cite[11.10.1 (ii), p.~441]{bonet1987} because $\Omega$ contains an infinite compact subset 
and $U$ is also barrelled. We deduce our statement from the closed graph theorem 
\cite[11.1.7 Theorem (c), p.~221]{jarchow1981} and the $B_{r}$-completeness of $X$. 

(iii) $(\mathrm{C}(\Omega;U),\tau_{\operatorname{co}})$ is a complete Mackey space 
by \prettyref{prop:cont_space_mackey}. 
Further, $(\mathrm{C}(\Omega;U),\tau_{\operatorname{co}})$ is a gDF-space by \cite[Corollary, p.~230]{schmets1987} 
since the countable union of compact subsets of $\Omega$ is relatively compact and $U$ a gDF-space. 
Thus $(\mathrm{C}(\Omega;U),\tau_{\operatorname{co}})_{b}'$ and $X_{b}'$ are Fr\'echet spaces 
by \cite[12.4.2 Theorem, p.~258]{jarchow1981}. We conclude our statement by the closed graph 
theorem \cite[Theorem 1 (vii), p.~398]{mcintosh1969}.

(iv) We know that $(\mathrm{C}(\Omega;U),\tau_{\operatorname{co}})$ is a Mackey space 
by \prettyref{prop:cont_space_mackey}.
The closed graph theorem \cite[Theorem 1, p.~390]{qiu1985} (and its correction \cite[Proposition 3.1, p.~17]{boos1993}) 
yields our statement because $X$ is an $L_{r}$-space. 

(v) Again, by \prettyref{prop:cont_space_mackey} we know that $(\mathrm{C}(\Omega;U),\tau_{\operatorname{co}})$ 
is a Mackey space. Our statement follows from the closed graph theorem \cite[Theorem 3.17, p.~513]{boos1997}. 
\end{proof}

A (non-empty, non-singleton) compact interval $\Omega$ fulfils all the requirements of \prettyref{prop:C_spaces}, so the spaces 
$X$ in \prettyref{prop:C_spaces} are examples of $3\mathrm{C}_{U}$-spaces under the given requirements on $U$. 
Regarding part (i) of \prettyref{prop:C_spaces}, further sufficient conditions on $U$ for the ultrabornologicity of 
$(\mathrm{C}(\Omega;U),\tau_{\operatorname{co}})$ for compact $\Omega$ are given in \cite{frerick2008}. 
In part (ii) one can replace the 
$B_{r}$-completeness by the more general condition that $X$ is a $\Gamma_{r}$-space 
(see \cite[Definition 7.1.9, Theorem 7.1.2, Proposition 7.2.2 (i), p.~202--204]{bonet1987}, 
in \cite[p.~45]{koethe1979} such a space is called an infra-$(s)$-space). 
Comparing parts (iv) and (v), we note that for the Mackey space 
$Y\coloneqq(\mathrm{C}(\Omega;U),\tau_{\operatorname{co}})$ 
the weak sequential completeness of its dual is equivalent to the property that any $\sigma(Y',Y)$-Cauchy sequence 
in $Y'$ is equicontinuous by \cite[\S34, 11.(6), 11.(8), p.~51--52]{koethe1979}. 
So the weak sequential completeness of the dual $Y'$ implies the $c_{0}$-barrelledness of $Y$. 
Looking at the other requirements in \prettyref{prop:C_spaces}, we note the following facts. 

\begin{rem}\fakephantomsection\label{rem:examples_X_for_closed_graph} 
\begin{enumerate}
\item[(a)] Examples of webbed spaces are Fr\'echet spaces, LF-spaces, strong duals of LF-spaces 
or sequentially complete gDF-spaces by \cite[5.2.2 Proposition, p.~90]{jarchow1981}, 
\cite[5.3.3 Corollary (b), p.~92]{jarchow1981}, \cite[Satz 7.25, p.~165]{kaballo2014} and 
\cite[12.4.6 Proposition, p.~260]{jarchow1981}.
\item[(b)] Examples of barrelled spaces are Fr\'echet spaces and reflexive spaces 
by \cite[Propositions 23.22, 23.23, p.~272]{meisevogt1997}. In particular, strong duals of reflexive spaces 
are barrelled by \cite[11.4.5 Proposition (f), p.~228]{jarchow1981}. 
\item[(c)] Examples of Mackey spaces are barrelled spaces by \cite[Chap.~IV, 3.4, p.~132]{schaefer1971}. 
\item[(d)] If $(X,\|\cdot\|,\tau)$ is a Saks space, then $(X,\gamma)$ is a gDF-space by 
\cite[I.1.27 Remark, p.~19]{cooper1978}. Further examples of gDF-spaces are DF-spaces by \cite[p.~257]{jarchow1981}, 
and examples of complete DF-spaces are strong duals of Fr\'echet spaces by \cite[12.4.5 Theorem, p.~260]{jarchow1981}.\item[(e)] Examples of $B$-complete, thus $B_{r}$-complete, spaces are Fr\'echet spaces and 
semireflexive gDF-spaces by \cite[9.5.2 Krein-\u{S}mulian Theorem, p.~184]{jarchow1981} and 
\cite[12.5.7 Proposition, p.~265]{jarchow1981}. In particular, semireflexive Saks spaces and strong duals of 
reflexive Fr\'echet spaces are $B$-complete. 
\end{enumerate}
\end{rem}

Regarding part (iv) of \prettyref{prop:C_spaces}, 
we also recall the following two properties of a Hausdorff locally convex space $X$. 
The space $X$ is called \emph{transseparable} if for every $0$-neighbourhood $U$ in $X$ there exists a 
countable set $A\subset X$ such that $X=A+U$ (see \cite[Definition 2.5.1, p.~53]{bonet1987}). 
Clearly, $X$ is transseparable if it is separable. 
$X$ is called a \emph{WCG-space} if there exists an absolutely convex $\sigma(X,X')$-compact set $K\subset X$ 
such that the span of $K$ is dense in $X$ (see \cite[Definition, p.~86]{hunter1977}).
$X$ is called a \emph{subWCG-space} if it is topologically isomorphic to a 
linear subspace of a WCG-space (see \cite[Definition, p.~93]{hunter1977}). 
By a remark in \cite[p.~86]{hunter1977} and \cite[Theorem 3.1, p.~93]{hunter1977} $X$ is a subWCG-space if it 
is separable. In addition, $X$ is a subWCG-space by a remark in \cite[p.~93]{hunter1977} and 
\cite[Exemples, p.~13]{buchwalter1973} if it is a Schwartz space. 

\begin{prop}\label{prop:examples_Lr}
Let $X$ be a $B_{r}$-complete Hausdorff locally convex space. If $X$ is trans\-separable or a subWCG-space, then 
it is a complete $L_{r}$-space. 
\end{prop}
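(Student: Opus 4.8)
The plan is to verify the two requirements for $X$ to be a complete $L_{r}$-space separately. Completeness is immediate, since every $B_{r}$-complete space is complete (see \cite[\S34, 2.(1), p.~26]{koethe1979}). To show that $X$ is an $L_{r}$-space, i.e.\ a $\theta_{r}$-space for $\theta=\theta_{bs}$, fix a linear subspace $M$ of $X^{\ast}$ such that $N\coloneqq M\cap X'$ is $\sigma(X',X)$-dense in $X'$, and put $S\coloneqq\uhat{N}^{\theta_{bs}}$; one has to prove $S=X'$. By definition $S$ is the smallest linear subspace of $X^{\ast}$ containing $N$ with $S=\dhat{S}^{\theta_{bs}}$ (that the intersection defining $\uhat{N}^{\theta_{bs}}$ is again $\dhat{\cdot}^{\theta_{bs}}$-stable follows from the monotonicity of the operation $\dhat{\cdot}^{\theta_{bs}}$ together with the trivial inclusion $T\subset\dhat{T}^{\theta_{bs}}$); in particular $S\supset N$ is $\sigma(X',X)$-dense in $X'$.

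The heart of the argument is to show that $S\cap X'$ is $\sigma^{f}(X',X)$-closed. Since $\sigma^{f}(X',X)$ coincides with $\sigma(X',X)$ on each equicontinuous set, a subspace of $X'$ is $\sigma^{f}(X',X)$-closed as soon as its intersection with every absolutely convex, $\sigma(X',X)$-closed, equicontinuous set $K\subset X'$ is $\sigma(X',X)$-closed; recall that such a $K$ is $\sigma(X',X)$-compact. Applied to $S\cap X'$, whose intersection with such a $K$ is $S\cap K$, this reduces the claim to showing that each $S\cap K$ is $\sigma(X',X)$-closed. Here the hypothesis on $X$ enters: if $X$ is transseparable, then $(K,\sigma(X',X))$ is metrisable (the restriction of $\sigma(X',X)$ to the polar of a $0$-neighbourhood $U$ is generated by countably many functionals, obtained by applying transseparability to the neighbourhoods $\tfrac{1}{n}U$, $n\in\N$), hence angelic; if $X$ is a subWCG-space, $(K,\sigma(X',X))$ is angelic by \cite{hunter1977}. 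In both cases every $y\in\overline{S\cap K}^{\sigma(X',X)}\subset K$ is the $\sigma(X',X)$-limit of a sequence $(s_{n})_{n\in\N}$ in $S\cap K$; since $K$ is equicontinuous, $(s_{n})_{n\in\N}$ is $\sigma(X^{\ast},X)$-bounded, and as $\sigma(X^{\ast},X)$ restricts to $\sigma(X',X)$ on $X'$ we get $(s_{n})_{n\in\N}\in\theta_{bs}$ with $\sigma(X^{\ast},X)\text{-}\lim_{n}s_{n}=y\in X'$, whence $y\in\dhat{S}^{\theta_{bs}}=S$. Thus $S\cap K=\overline{S\cap K}^{\sigma(X',X)}$, so $S\cap X'$ is $\sigma^{f}(X',X)$-closed. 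Being $\sigma(X',X)$-dense, it equals $X'$ by $B_{r}$-completeness of $X$; in particular $X'\subset S$.

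It remains to see that $S\subset X'$. For this it suffices that $X'$ is itself $\dhat{\cdot}^{\theta_{bs}}$-stable, for then $X'$ is one of the subspaces intersected in the definition of $S=\uhat{N}^{\theta_{bs}}$, and hence $S\subset X'$; equivalently, one needs that the $\sigma(X^{\ast},X)$-limit of every $\sigma(X^{\ast},X)$-bounded sequence in $X'$ again lies in $X'$. This sequential barrelledness property is the point where I expect the real difficulty to lie: it has to be drawn from $B_{r}$-completeness (again in combination with transseparability, respectively the subWCG-property, so that the relevant bounded sequences can be treated on the metrisable, respectively angelic, equicontinuous subsets of $X'$), after which one concludes $S=X'$, so that $X$ is an $L_{r}$-space and, being complete, a complete $L_{r}$-space. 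Apart from this point the argument is bookkeeping with the definitions of $\dhat{\cdot}^{\theta_{bs}}$, $\uhat{\cdot}^{\theta_{bs}}$, $\sigma^{f}(X',X)$ and $B_{r}$-completeness, together with the reduction ``$\sigma^{f}(X',X)$-closed iff the trace on every equicontinuous set is $\sigma(X',X)$-closed'', which is itself just the description of $\sigma^{f}(X',X)$ as the finest topology agreeing with $\sigma(X',X)$ on equicontinuous sets.
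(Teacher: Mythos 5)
Your first half is sound: the reduction of $\sigma^{f}(X',X)$-closedness to traces on absolutely convex $\sigma(X',X)$-compact equicontinuous sets, the metrisability of such sets when $X$ is transseparable, the sequential argument giving that $S\cap K$ is $\sigma(X',X)$-closed for $S=\uhat{N}^{\theta_{bs}}$, and the application of $B_{r}$-completeness to the dense $\sigma^{f}(X',X)$-closed subspace $S\cap X'$ correctly yield $X'\subset S$. The problem is the other inclusion, which you explicitly leave open. It is not a residual technicality: taking $M=X^{\ast}$ in the definition of an $L_{r}$-space shows that the $\dhat{\cdot}^{\theta_{bs}}$-stability of $X'$ (every $\sigma(X^{\ast},X)$-bounded sequence in $X'$ whose $\sigma(X^{\ast},X)$-limit is $y$ must have $y\in X'$) is itself part of the assertion to be proved, so your argument establishes only half of the proposition. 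Moreover, the repair you sketch --- treating the relevant bounded sequences on the metrisable, respectively angelic, equicontinuous subsets of $X'$ --- does not apply as stated: a $\sigma(X^{\ast},X)$-bounded (i.e.\ pointwise bounded) sequence in $X'$ need not be contained in any equicontinuous set unless $X$ has a sequential barrelledness property ($c_{0}$-barrelledness), which is exactly what is \emph{not} assumed here; for barrelled $X$ the whole statement would be easy. So the analytic core --- showing that pointwise bounded, pointwise convergent sequences of continuous functionals on a transseparable (resp.\ subWCG) $B_{r}$-complete space have continuous limits --- is missing, and it cannot be obtained by the bookkeeping reductions you describe.

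For comparison, the paper does not argue from the definition at all: completeness is quoted from the fact that $B_{r}$-complete spaces are complete, the transseparable case is obtained by combining the closed graph theorem \cite[Theorem 1.4, p.~183]{kraaij2016a} with Qiu's closed-graph-theorem characterization of $L_{r}$-spaces \cite[Theorem 3, p.~391]{qiu1985}, and the subWCG case is quoted from \cite[Theorem 3.3, p.~17]{boos1993}. The step you are missing is precisely the content supplied by these Kalton-type results; if you want a self-contained proof along your lines, you must either prove that stability property directly under the stated hypotheses or route the argument through those theorems as the paper does.
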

\begin{proof}
$B_{r}$-complete spaces are complete by \cite[9.5.1 Proposition (b), p.~183]{jarchow1981}. 
If $X$ is trans\-separable, then it follows from the closed graph theorem \cite[Theorem 1.4, p.~183]{kraaij2016a} 
and \cite[Theorem 3, p.~391]{qiu1985} that it is an $L_{r}$-space. 
If $X$ is a subWCG-space, then our statement follows from \cite[Theorem 3.3, p.~17]{boos1993}.
\end{proof}

Further examples of $L_{r}$-spaces can be found in \cite{qiu1985,qiu1995}. 
For instance, every semireflexive DF-space is an $L_{r}$-space by \cite[Theorem 4, p.~168]{qiu1995}. 
Concerning the weak sequential completeness of $(\mathrm{C}(\Omega;U),\tau_{\operatorname{co}})'$ 
in \prettyref{prop:C_spaces} (iv), we remark the following observation, which uses the notion of a Mazur space. 
We recall from \cite[p.~40]{wilansky1981} that 
a Hausdorff locally convex space $X$ is called a \emph{Mazur space} if 
\[
X'=\{x'\colon X\to\K\;|\;x'\;\text{is linear and sequentially continuous}\}.
\]
In particular, every C-sequential space is a Mazur space by \cite[Theorem 7.4, p.~52]{wilansky1981}.

\begin{prop}\label{prop:weakly_seq_complete_dual}
Let $\Omega$ be a Hausdorff $\mu$-$k_{\R}$-space, $U$ a complete Mackey space and 
$(\mathrm{C}(\Omega;U),\tau_{\operatorname{co}})$ a Mazur space. 
Then $(\mathrm{C}(\Omega;U),\tau_{\operatorname{co}})$ is a Mackey--Mazur space and 
$(\mathrm{C}(\Omega;U),\tau_{\operatorname{co}})'$ weakly sequentially complete.
\end{prop}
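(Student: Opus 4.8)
The plan is to treat the two assertions separately. The first one is immediate: since $\Omega$ is a Hausdorff $\mu$-$k_{\R}$-space and $U$ a complete Mackey space, \prettyref{prop:cont_space_mackey} shows that $Y\coloneqq(\mathrm{C}(\Omega;U),\tau_{\operatorname{co}})$ is a complete Mackey space, and by hypothesis it is a Mazur space; hence it is a Mackey--Mazur space, and nothing further is needed here.

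For the second assertion, let $(f_n)_{n\in\N}$ be a $\sigma(Y',Y)$-Cauchy sequence in $Y'$. For every $y\in Y$ the scalar sequence $(f_n(y))_{n\in\N}$ is Cauchy, so $f(y)\coloneqq\lim_{n\to\infty}f_n(y)$ exists and $f\colon Y\to\K$ is linear. Since $Y$ is a Mazur space, $f\in Y'$ as soon as $f$ is sequentially continuous, and then $(f_n)_{n}$ converges to $f$ in $\sigma(Y',Y)$, which is exactly weak sequential completeness of $Y'$. (Equivalently, as $Y$ is Mackey, it would suffice by the remark after \prettyref{prop:C_spaces} and \cite[\S34, 11.(6), 11.(8) p.~51--52]{koethe1979} to show that $(f_n)_{n}$ is equicontinuous.) Thus everything reduces to the sequential continuity of $f$.

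To prove that, fix a null sequence $(y_k)_{k\in\N}$ in $Y$ and a fundamental system of seminorms $\Gamma_{Y}$ of $\tau_{\operatorname{co}}$. As $Y$ is complete and $\sup_k p(y_k)<\infty$ for all $p\in\Gamma_{Y}$, the series $\sum_{k}t_ky_k$ converges in $Y$ for every $(t_k)_{k}\in\ell^{1}$ (its partial sums are Cauchy). Applying $f_n$ and using its continuity shows that $\bigl(\sum_{k}t_kf_n(y_k)\bigr)_{n}$ is Cauchy in $\K$ for each $(t_k)_{k}\in\ell^{1}$; since $\ell^{1}$ is barrelled, Banach--Steinhaus gives $\sup_{n,k}|f_n(y_k)|<\infty$, and a standard condensation-of-singularities argument upgrades this to $\sup_{n}\sup_{y\in B}|f_n(y)|<\infty$ for every bounded $B\subseteq Y$, so that $\{f_n:n\in\N\}$ is equibounded. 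Then $(f(y_k))_{k}\in\ell^{\infty}$, being the coordinatewise limit of the sequences $(f_n(y_k))_{k}\in c_{0}$, and dominated convergence yields $f\bigl(\sum_{k}t_ky_k\bigr)=\sum_{k}t_kf(y_k)$ for all $(t_k)_{k}\in\ell^{1}$; in particular $((f_n(y_k))_{k})_{n}$ converges to $(f(y_k))_{k}$ in $\sigma(\ell^{\infty},\ell^{1})$.

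The step I expect to be the main obstacle is the remaining one: showing that $f(y_k)\to 0$, i.e.\ that the $\ell^{\infty}$-limit $(f(y_k))_{k}$ actually lies in $c_{0}$ --- equivalently, that the equibounded sequence $(f_n)_{n}$ is genuinely equicontinuous. This cannot come from ``complete Mackey Mazur'' alone (otherwise $c_{0}$ would be a counterexample), so one must exploit the structure of $\mathrm{C}(\Omega;U)$. Concretely, I would use the $\varepsilon$-product representation $\mathrm{C}(\Omega;U)\cong\mathrm{C}(\Omega)\varepsilon U$ from the proof of \prettyref{prop:cont_space_mackey} together with the barrelledness of the scalar space $(\mathrm{C}(\Omega),\tau_{\operatorname{co}})$ (valid since $\Omega$ is a $\mu$-$k_{\R}$-space), which makes $\mathrm{C}(\Omega)'$ weakly sequentially complete because there $\sigma(\mathrm{C}(\Omega)',\mathrm{C}(\Omega))$-bounded sequences are equicontinuous, and then transfer this through the completeness of $U$ to exclude $c_{0}$-type behaviour of the coordinates $(f(y_k))_{k}$; alternatively, one could invoke a ready-made statement on duals of Mazur spaces from \cite{wilansky1981}.
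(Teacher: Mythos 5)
Your treatment of the first assertion is exactly the paper's: \prettyref{prop:cont_space_mackey} gives that $Y=(\mathrm{C}(\Omega;U),\tau_{\operatorname{co}})$ is a complete Mackey space, and the Mazur property is assumed. Your reduction of the second assertion to the sequential continuity of the pointwise limit $f$, the $\ell^{1}$-series/Banach--Steinhaus argument giving $\sup_{n}\sup_{y\in B}|f_{n}(y)|<\infty$ for every bounded $B$, and the dominated-convergence identity are all correct. The problem is that the proof then stops at precisely the decisive step, and the reason you give for abandoning the route through the general hypotheses is mistaken: $c_{0}$ is \emph{not} a counterexample to ``complete Mackey Mazur implies weakly sequentially complete dual''. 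The dual of every Banach space is weak$^{*}$ sequentially complete: a $\sigma(X',X)$-Cauchy sequence is pointwise bounded, hence norm bounded by the uniform boundedness principle, so its pointwise limit is continuous. What fails for $c_{0}$ is weak sequential completeness of the space itself, i.e.\ of $(c_{0},\sigma(c_{0},\ell^{1}))$, whereas the proposition concerns $(Y',\sigma(Y',Y))$; you have interchanged the two. In fact the general implication is exactly what the paper uses: after \prettyref{prop:cont_space_mackey} and the Mazur hypothesis it simply cites \cite[Propositions 4.3, 4.4, p.~354]{webb1968}, and no structure of $\mathrm{C}(\Omega;U)$ beyond \prettyref{prop:cont_space_mackey} enters. (Completeness and the Mackey property are genuinely needed --- for instance $(\ell^{1},\sigma(\ell^{1},c_{0}))$ is sequentially complete and Mazur, yet its dual $c_{0}$ is not weakly sequentially complete --- but $Y$ has both, which your argument never exploits after establishing them.)

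Consequently there is a genuine gap: the claim $f(y_{k})\to 0$ is never proved. The substitute you sketch --- pass to $\mathrm{C}(\Omega)\varepsilon U$, use barrelledness of $(\mathrm{C}(\Omega),\tau_{\operatorname{co}})$, and ``transfer through the completeness of $U$'' --- is not an argument: you do not explain how weak$^{*}$ sequential completeness of $\mathrm{C}(\Omega)'$ would control the limit functional on the vector-valued space (the dual of the $\varepsilon$-product is not simply assembled from $\mathrm{C}(\Omega)'$), and your fallback of ``invoking a ready-made statement on duals of Mazur spaces'' concedes exactly the general principle you declared unavailable. To close the gap, either prove that last step or argue as the paper does: quote Webb's results, which apply to $Y$ because it is a complete Mackey--Mazur space.
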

\begin{proof}
The space $(\mathrm{C}(\Omega;U),\tau_{\operatorname{co}})$ is a Mackey space by \prettyref{prop:cont_space_mackey}. 
Thus it is a Mackey--Mazur space by assumption. 
It follows from \cite[Propositions 4.3, 4.4, p.~354]{webb1968} that 
$(\mathrm{C}(\Omega;U),\tau_{\operatorname{co}})'$ is weakly sequentially complete.
\end{proof}

We see in \prettyref{prop:cont_space_mackey} that $(\mathrm{C}(\Omega;U),\tau_{\operatorname{co}})$ is a 
Mackey space if $\Omega$ is a $\mu$-$k_{\R}$-space and $U$ a complete Mackey space. 
Is this true for the Mazur property as well?

\begin{prob}
Let $\Omega$ be a Hausdorff $\mu$-$k_{\R}$-space and $U$ a complete Hausdorff locally convex space. 
Is $(\mathrm{C}(\Omega;U),\tau_{\operatorname{co}})$ a Mazur space if $U$ is a Mazur space?
\end{prob}

We prepare an application of \prettyref{prop:C_spaces} (iv). First, we recall again some notions from 
general topology. A topological space $\Omega$ is called \emph{submetrisable} 
if there exist a metric space $Y$ and an injective continuous map $f\colon\Omega\to Y$. If, in addition, 
$Y$ can be chosen to be separable, then $\Omega$ is called \emph{separably submetrisable} 
(see \cite[p.~508]{summers1972}). In particular, \emph{Polish spaces}, i.e.~separably completely metrisable spaces,
are separably submetrisable.

Second, let $\Omega$ be a completely regular Hausdorff space and $\mathcal{V}$ denote the set of all 
non-negative bounded functions $v$ on $\Omega$ that \emph{vanish at infinity}, i.e.~for every $\varepsilon>0$ the 
set $\{x\in\Omega\;|\;v(x)\geq\varepsilon\}$ is compact. 
Let $\beta_{0}$ be the Hausdorff locally convex topology on $\mathrm{C}_{\operatorname{b}}(\Omega)$ that is induced 
by the seminorms 
\[
|f|_{v}\coloneqq\sup_{x\in\Omega}|f(x)|v(x),\quad f\in\mathrm{C}_{\operatorname{b}}(\Omega),
\]
for $v\in\mathcal{V}$. The topology $\beta_{0}$ is called the \emph{substrict topology} 
(see \cite[p.~315--316]{sentilles1972}). 
$(\mathrm{C}_{\operatorname{b}}(\Omega),\|\cdot\|_{\infty},\tau_{\operatorname{co}})$ is a Saks space and 
$\beta_{0}=\gamma(\|\cdot\|_{\infty},\tau_{\operatorname{co}})$ by \cite[Theorem 2.4, p.~316]{sentilles1972}. 
Here, $\|\cdot\|_{\infty}$ denotes the supremum norm on $\mathrm{C}_{\operatorname{b}}(\Omega)$. 
If $\Omega$ is compact, then $\beta_{0}=\tau_{\|\cdot\|_{\infty}}=\tau_{\operatorname{co}}$. 
If $\Omega$ is not compact, then $(\mathrm{C}_{\operatorname{b}}(\Omega),\beta_{0})$ is neither barrelled 
nor bornological, in particular not metrisable, by \cite[Theorem 4.8, p.~321]{sentilles1972} 
(and the comments concerning $\beta_{0}$ after its proof). 
The interest in the space $(\mathrm{C}_{\operatorname{b}}(\Omega),\beta_{0})$ comes for instance from 
transition semigroups that are considered on it (see \cite{goldys2024,kruse_schwenninger2022} and the references therein).

\begin{cor}\label{cor:weakly_seq_complete_dual_substrict}
Let $\Omega$ and $\Omega_{0}$ be both Polish spaces, or both hemicompact Hausdorff $k_{\R}$-spaces. 
If $\Omega$ is compact, 
then $(\mathrm{C}(\Omega;(\mathrm{C}_{\operatorname{b}}(\Omega_{0}),\beta_{0})),\tau_{\operatorname{co}})$  
is a Mackey--Mazur space and 
$(\mathrm{C}(\Omega;(\mathrm{C}_{\operatorname{b}}(\Omega_{0}),\beta_{0})),\tau_{\operatorname{co}})'$ 
weakly sequentially complete. 
\end{cor}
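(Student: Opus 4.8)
The plan is to apply Proposition \ref{prop:weakly_seq_complete_dual} with $U\coloneqq(\mathrm{C}_{\operatorname{b}}(\Omega_{0}),\beta_{0})$. For this I must verify three things: that $\Omega$ is a Hausdorff $\mu$-$k_{\R}$-space, that $U$ is a complete Mackey space, and that $(\mathrm{C}(\Omega;U),\tau_{\operatorname{co}})$ is a Mazur space. The first is immediate from the hypotheses together with the preliminary discussion in the appendix: a compact Hausdorff space is both a $k_{\R}$-space and (being Lindel\"of, hence realcompact) a $\mu$-space. So the real work lies in the properties of $U$ and of the vector-valued function space.

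First I would establish that $U=(\mathrm{C}_{\operatorname{b}}(\Omega_{0}),\beta_{0})$ is a complete Mackey space. Since $\beta_{0}=\gamma(\|\cdot\|_{\infty},\tau_{\operatorname{co}})$ makes $(\mathrm{C}_{\operatorname{b}}(\Omega_{0}),\|\cdot\|_{\infty},\tau_{\operatorname{co}})$ a Saks space, $(U,\beta_{0})$ is a gDF-space by Remark \ref{rem:examples_X_for_closed_graph}(d). Completeness of $(U,\beta_{0})$ should follow because $\Omega_{0}$ is a hemicompact $k_{\R}$-space (either directly a Polish space is metrisable hence a $k_{\R}$-space and separably submetrisable; or by hypothesis), using the relevant completeness criterion for the substrict/mixed topology on $\mathrm{C}_{\operatorname{b}}$ of a hemicompact $k_{\R}$-space; the Mackey property then comes from the $B_{r}$-completeness route, i.e.~one shows $(U,\beta_{0})$ is $B$-complete (as a semireflexive gDF-space, or via the comments after Corollary \ref{cor:weakly_seq_complete_dual_substrict}'s surrounding text identifying $\beta_{0}=\mu(\ell^{\infty},\ell^{1})$-type situations), and $B_{r}$-complete spaces carry no weaker-than-Mackey issue — more directly, gDF-spaces that are complete are Mackey. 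Alternatively one invokes that $(\mathrm{C}_{\operatorname{b}}(\Omega_{0}),\beta_{0})$ is known to be a Mackey space from the transition-semigroup literature cited (\cite{kruse_schwenninger2022}).

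Next, the Mazur property of $(\mathrm{C}(\Omega;U),\tau_{\operatorname{co}})$. Here I would use that $\Omega$ is compact, so $\tau_{\operatorname{co}}$ is the topology of uniform convergence induced by the seminorms $\|f\|_{p}=\sup_{x\in\Omega}p(f(x))$ for $p\in\Gamma_{U}$, and that $U$ is a gDF-space, hence a C-sequential space — or at least a Mazur space — because gDF-spaces have a countable basis of bounded sets and are quasi-$c_{0}$-barrelled, which forces sequential continuity of linear functionals to imply continuity. The key lemma is that $\mathrm{C}(\Omega;U)$ with the compact-open topology inherits the Mazur (or C-sequential) property from $U$ when $\Omega$ is compact; this is where I expect the main obstacle, since transferring sequential-to-topological continuity statements through a $\mathrm{C}(\Omega;\cdot)$ functor is delicate: a $\tau_{\operatorname{co}}$-sequentially continuous functional on $\mathrm{C}(\Omega;U)$ must be pushed down to a statement about sequences in $U$ via evaluation and uniform approximation. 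Once the Mazur property is in hand, Proposition \ref{prop:weakly_seq_complete_dual} applies verbatim and yields that $(\mathrm{C}(\Omega;U),\tau_{\operatorname{co}})$ is Mackey--Mazur with weakly sequentially complete dual, which is exactly the claim.
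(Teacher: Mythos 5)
Your overall strategy (apply \prettyref{prop:weakly_seq_complete_dual} with $U=(\mathrm{C}_{\operatorname{b}}(\Omega_{0}),\beta_{0})$) matches the paper, and the first two verifications are fine in substance: completeness of $(\mathrm{C}_{\operatorname{b}}(\Omega_{0}),\beta_{0})$ is standard for the mixed topology, and the Mackey property is indeed a citation (Sentilles for $\Omega_{0}$ Polish, Mosiman for $\Omega_{0}$ hemicompact $k_{\R}$), not something you can get ``more directly'' from general gDF theory --- your claim that complete gDF-spaces are Mackey is false in general, and your $B$-completeness route is unavailable: the paper explicitly leaves the $B_{r}$-completeness of $(\mathrm{C}_{\operatorname{b}}(\Omega_{0}),\beta_{0})$ for Polish or hemicompact $k_{\R}$ spaces $\Omega_{0}$ as an open problem.

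The genuine gap is your ``key lemma'' that $(\mathrm{C}(\Omega;U),\tau_{\operatorname{co}})$ inherits the Mazur (or C-sequential) property from $U$ when $\Omega$ is compact. You state it, flag it as the main obstacle, and never prove it --- and in fact exactly this inheritance question is posed in the paper as an open problem, so the corollary cannot be closed along that route. The paper circumvents it by exploiting the specific form of $U$: since $\Omega$ is compact, \cite[1.2 Theorem, p.~123]{bierstedt1974} gives a topological isomorphism
\[
(\mathrm{C}(\Omega;(\mathrm{C}_{\operatorname{b}}(\Omega_{0}),\beta_{0})),\tau_{\operatorname{co}})
\cong(\mathrm{C}_{\operatorname{b}}(\Omega\times\Omega_{0}),\beta_{0}),
\]
and the product $\Omega\times\Omega_{0}$ is again Polish (resp.\ a hemicompact Hausdorff $k_{\R}$-space, by Buchwalter). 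The scalar-valued space $(\mathrm{C}_{\operatorname{b}}(\Omega\times\Omega_{0}),\beta_{0})$ is then C-sequential, hence Mazur, by \cite[Remark 3.19 (a)]{kruse_schwenninger2022}, and this transfers through the isomorphism; only after that does \prettyref{prop:weakly_seq_complete_dual} apply. Your proposal is missing precisely this reduction to the scalar-valued case on the product space, and without it (or some other proof of the Mazur property of the vector-valued space) the argument does not go through. A secondary inaccuracy: quasi-$c_{0}$-barrelledness of a gDF-space concerns equicontinuity of null sequences in the dual and does not yield that sequentially continuous functionals are continuous, so even the Mazur property of $U$ itself should be obtained from the C-sequentiality results for $(\mathrm{C}_{\operatorname{b}}(\Omega_{0}),\beta_{0})$, not from gDF generalities.
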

\begin{proof}
The space $(\mathrm{C}_{\operatorname{b}}(\Omega_{0}),\beta_{0})$ is complete by 
\cite[3.6.9 Theorem, p.~72]{jarchow1981}. 
Further, $(\mathrm{C}_{\operatorname{b}}(\Omega_{0}),\beta_{0})$ is a Mackey space by 
\cite[Theorems 5.7, 5.8 (b), 9.1 (a), p.~325, 332]{sentilles1972} if $\Omega_{0}$ is Polish, 
and by \cite[Theorem 5.2, p.~884]{mosiman1972} if $\Omega_{0}$ is a hemicompact Hausdorff $k_{\R}$-space. 

The space $(\mathrm{C}(\Omega;(\mathrm{C}_{\operatorname{b}}(\Omega_{0}),\beta_{0})),\tau_{\operatorname{co}})$ 
is topologically isomorphic to 
the space $(\mathrm{C}_{\operatorname{b}}(\Omega\times\Omega_{0}),\beta_{0})$ 
by \cite[1.2 Theorem, p.~123]{bierstedt1974} 
(and the subsequent comments). The space $\Omega\times\Omega_{0}$ is Polish if $\Omega$ and $\Omega_{0}$ are Polish, 
and $\Omega\times\Omega_{0}$ is a hemicompact Hausdorff $k_{\R}$-space by \cite[Lemme (2.4), p.~55]{buchwalter1972} 
if $\Omega$ and $\Omega_{0}$ are hemicompact Hausdorff $k_{\R}$-spaces. 
Therefore $(\mathrm{C}_{\operatorname{b}}(\Omega\times\Omega_{0}),\beta_{0})$ is C-sequential, thus Mazur,  
by \cite[Remark 3.19 (a), p.~14]{kruse_schwenninger2022}. 
Hence the topologically isomorphic space 
$(\mathrm{C}(\Omega;(\mathrm{C}_{\operatorname{b}}(\Omega_{0}),\beta_{0})),\tau_{\operatorname{co}})$  
is also a Mazur space. We deduce our statement from \prettyref{prop:weakly_seq_complete_dual}. 
\end{proof}

In the case that $\Omega$ is a compact Polish space and $\Omega_{0}$ a Polish space 
the weak sequential completeness of the space $(\mathrm{C}_{\operatorname{b}}(\Omega\times\Omega_{0}),\beta_{0})'$ 
is also a consequence of \cite[Lemma 1.9, p.~183]{kraaij2016a}. 

\begin{cor}\label{cor:Cb_substrict_Cspace}
If 
\begin{enumerate}
\item[(i)] $\Omega$ is a Polish space or a separably submetrisable hemicompact Hausdorff $k_{\R}$-space, or
\item[(ii)] $\Omega$ is a hemicompact Hausdorff $k_{\R}$-space such that every compact subset is metrisable, 
\end{enumerate} 
and $(\mathrm{C}_{\operatorname{b}}(\Omega),\beta_{0})$ is $B_{r}$-complete, 
then $(\mathrm{C}_{\operatorname{b}}(\Omega),\beta_{0})$ is a $3\mathrm{C}$-space.
\end{cor}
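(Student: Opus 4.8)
The plan is to deduce the statement from the closed graph theorem packaged in \prettyref{prop:C_spaces}~(iv), applied with the domain parameter ``$\Omega$'' of that proposition specialised to the compact interval $[0,r]$ and with $U=X\coloneqq(\mathrm{C}_{\operatorname{b}}(\Omega),\beta_{0})$. The space $[0,r]$ is compact and metrisable, hence a $\mu$-$k_{\R}$-space, so the first hypothesis of \prettyref{prop:C_spaces}~(iv) is automatic. As in the proof of \prettyref{cor:weakly_seq_complete_dual_substrict}, the space $X=(\mathrm{C}_{\operatorname{b}}(\Omega),\beta_{0})$ is complete by \cite{jarchow1981} and is a Mackey space by \cite{sentilles1972} (if $\Omega$ is Polish) resp.\ by \cite{mosiman1972} (if $\Omega$ is a hemicompact Hausdorff $k_{\R}$-space), which settles the requirement that $U$ be a complete Mackey space. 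The weak sequential completeness of $(\mathrm{C}([0,r];X),\tau_{\operatorname{co}})'$ is precisely \prettyref{cor:weakly_seq_complete_dual_substrict} applied with its ``$\Omega$'' taken to be the compact space $[0,r]$ and its ``$\Omega_{0}$'' taken to be $\Omega$: in case~(i) the pair $[0,r],\Omega$ consists of two Polish spaces (first alternative) respectively of two hemicompact Hausdorff $k_{\R}$-spaces (second alternative), and in case~(ii) it again consists of two hemicompact Hausdorff $k_{\R}$-spaces, so the corollary applies.

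It remains to verify the fourth hypothesis of \prettyref{prop:C_spaces}~(iv), namely that $X$ is an $L_{r}$-space, and here I would invoke \prettyref{prop:examples_Lr}: since $X$ is $B_{r}$-complete by assumption, it suffices to show that $X=(\mathrm{C}_{\operatorname{b}}(\Omega),\beta_{0})$ is transseparable, and for this it is enough to know that $\Omega$ is separably submetrisable, because then $(\mathrm{C}_{\operatorname{b}}(\Omega),\beta_{0})$ is separable, hence transseparable, by \cite{summers1972}. In case~(i) separable submetrisability is built in (a Polish space is separable metrisable, and in the second alternative it is assumed outright). In case~(ii) one extracts it from the hypothesis that every compact subset of $\Omega$ is metrisable: being hemicompact, $\Omega$ is $\sigma$-compact by \cite{engelking1989}, hence Lindel\"of, and, being completely regular Hausdorff, normal; writing $\Omega=\bigcup_{n\in\N}K_{n}$ with an increasing sequence $(K_{n})_{n\in\N}$ of compact subsets, each closed in $\Omega$ and metrisable by hypothesis, every $K_{n}$ embeds into the Hilbert cube $H$, and extending these embeddings coordinatewise via Tietze's theorem to continuous maps $\widetilde h_{n}\colon\Omega\to H$ one obtains that the diagonal map $(\widetilde h_{n})_{n\in\N}\colon\Omega\to H^{\N}$ is a continuous injection into a separable metric space.

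Once all four hypotheses of \prettyref{prop:C_spaces}~(iv) are in place, every closed linear operator $(\mathrm{C}([0,r];X),\tau_{\operatorname{co}})\to X$ is continuous; as $[0,r]$ is compact, $\tau_{\operatorname{co}}$ is exactly the topology on $\mathrm{C}([0,r];X)=\mathrm{C}_{\operatorname{b}}([0,r];X)$ used in the definition of a $3\mathrm{C}$-space, so $X=(\mathrm{C}_{\operatorname{b}}(\Omega),\beta_{0})$ is a $3\mathrm{C}$-space. I expect the main obstacle to be the verification that $X$ is an $L_{r}$-space, i.e.\ the transseparability step together with the case-(ii) topological argument establishing separable submetrisability of $\Omega$; the remaining three inputs of \prettyref{prop:C_spaces}~(iv) are delivered almost verbatim by \prettyref{cor:weakly_seq_complete_dual_substrict} and its proof, and the only real care needed beyond that is to keep track of which of the listed hypotheses on $\Omega$ supplies which input (Polish for the ``two Polish spaces'' branch of \prettyref{cor:weakly_seq_complete_dual_substrict} and for \cite{sentilles1972}; hemicompact Hausdorff $k_{\R}$ for the other branch and for \cite{mosiman1972}; separable submetrisability, assumed or derived, for \cite{summers1972}).
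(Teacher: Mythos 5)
Your proposal is correct and follows the paper's skeleton exactly: specialise \prettyref{prop:C_spaces}~(iv) to the compact interval $[0,r]$ with $U=X=(\mathrm{C}_{\operatorname{b}}(\Omega),\beta_{0})$, get completeness and the Mackey property as in the proof of \prettyref{cor:weakly_seq_complete_dual_substrict}, get weak sequential completeness of $(\mathrm{C}([0,r];X),\tau_{\operatorname{co}})'$ from that corollary applied to the pair $([0,r],\Omega)$, and get the $L_{r}$-property from \prettyref{prop:examples_Lr} together with the assumed $B_{r}$-completeness. The one genuine divergence is how transseparability is obtained in case~(ii). The paper cites Khan's result (\cite[Theorem 2, Remark 2]{khan2008}), which gives transseparability of $(\mathrm{C}_{\operatorname{b}}(\Omega),\beta_{0})$ for \emph{any} completely regular Hausdorff $\Omega$ whose compact subsets are metrisable, without passing through separability and without using hemicompactness at that step. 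You instead prove directly that a hemicompact Hausdorff $k_{\R}$-space with metrisable compacta is separably submetrisable (hemicompact $\Rightarrow$ $\sigma$-compact $\Rightarrow$ Lindel\"of, hence normal; embed the increasing covering compacta $K_{n}$ into the Hilbert cube, Tietze-extend coordinatewise, and take the diagonal map, which is injective because any two points lie in a common $K_{n}$), so that case~(ii) collapses into the separably submetrisable alternative of case~(i) and Summers' separability theorem \cite{summers1972} finishes the argument. Your argument is sound (one could even avoid the normality detour, since compact subsets of completely regular Hausdorff spaces are $\mathrm{C}^{*}$-embedded) and has the added value of showing that, under hemicompactness, hypothesis~(ii) is subsumed by~(i); the paper's citation is shorter, yields transseparability rather than the stronger separability, and does not need hemicompactness for that particular step.
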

\begin{proof}
(i) The space $(\mathrm{C}_{\operatorname{b}}(\Omega),\beta_{0})$ is separable by 
\cite[2.1 Theorem, p.~509]{summers1972} since $\Omega$ is a separably submetrisable completely regular 
Hausdorff space in both cases. 

(ii) The space $(\mathrm{C}_{\operatorname{b}}(\Omega),\beta_{0})$ is transseparable 
by \cite[Theorem 2, p.~683]{khan2008} with $E\coloneqq \K$ and \cite[Remark 2, p.~685]{khan2008} because $\Omega$ 
is a completely regular Hausdorff space such that every compact subset is metrisable.

The space $(\mathrm{C}_{\operatorname{b}}(\Omega),\beta_{0})$ is a complete Mackey space in both cases 
by the proof of \prettyref{cor:weakly_seq_complete_dual_substrict}. 
We deduce from \prettyref{prop:C_spaces} (iv), \prettyref{prop:examples_Lr} 
and \prettyref{cor:weakly_seq_complete_dual_substrict} that 
$X\coloneqq U\coloneqq (\mathrm{C}_{\operatorname{b}}(\Omega),\beta_{0})$ is a $3\mathrm{C}$-space in 
both cases.
\end{proof}

If $\Omega$ is a compact Hausdorff space, then $\beta_{0}=\tau_{\|\cdot\|_{\infty}}$ 
and so $(\mathrm{C}_{\operatorname{b}}(\Omega),\beta_{0})$ 
is a completely normable space, hence $B$-complete and so $B_{r}$-complete by 
\cite[9.5.2 Krein-\u{S}mulian Theorem, p.~184]{jarchow1981}. 
If $\Omega$ is a discrete space, then $(\mathrm{C}_{\operatorname{b}}(\Omega),\beta_{0})$ is $B$-complete, 
thus $B_{r}$-complete, by \cite[4.6 Corollary, p.~367]{collins1968}. 
Unfortunately, apart from these two cases not much seems to be known when 
$(\mathrm{C}_{\operatorname{b}}(\Omega),\beta_{0})$ is 
$B_{r}$-complete or $B$-complete (see \cite[p.~1202]{summers1970} as well as 
\cite[Proposition 1.2, Theorem 1.7 p.~182--183]{kraaij2016a} and its corrigendum \cite{kraaij2019}).

\begin{prob}
Is $(\mathrm{C}_{\operatorname{b}}(\Omega),\beta_{0})$ a $B_{r}$-complete space if $\Omega$ 
is Polish or a hemicompact Hausdorff $k_{\R}$-space?
\end{prob}

Now, let us turn to \prettyref{prop:C_spaces} (v). 
We say that a Hausdorff locally convex space $X$ has the \emph{Banach--Mackey property} 
if every $\sigma(X,X')$-bounded subset of $X$ is already $\beta(X,X')$-bounded. 
Such spaces are also called \emph{Banach--Mackey spaces} in \cite[p.~216]{qiu1991} and named after the 
Banach--Mackey theorem. However, they need not be Mackey spaces (or Banach spaces).

\begin{prop}\label{prop:Saks_quasi_c0_barrelled}
Let $\Omega$ be a completely regular Hausdorff space such that the countable union of compact subsets of 
$\Omega$ is relatively compact and $(U,\|\cdot\|,\tau)$ be a Saks space. Then the following assertions hold. 
\begin{enumerate}
\item[(a)] $(U,\gamma)$ and $(\mathrm{C}(\Omega;(U,\gamma)),\tau_{\operatorname{co}})$ are quasi-$c_{0}$-barrelled. 
\item[(b)] If $(U,\|\cdot\|)$ is complete, then $(U,\gamma)$ is locally complete and has the Banach--Mackey property.
\item[(c)] If $\Omega$ is a $k_{\R}$-space and $(U,\gamma)$ quasi-complete, 
then $(\mathrm{C}(\Omega;(U,\gamma)),\tau_{\operatorname{co}})$ has the Banach--Mackey property. 
\item[(d)] If $\Omega$ is second-countable and locally compact and $(U,\gamma)$ sequentially complete, 
then $(\mathrm{C}(\Omega;(U,\gamma)),\tau_{\operatorname{co}})$ has the Banach--Mackey property. 
\end{enumerate}
\end{prop}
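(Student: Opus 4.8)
The plan is to route part~(a) through the gDF-property and parts~(b)--(d) through \emph{local completeness}, exploiting the general principle that a Hausdorff locally convex space in which every closed bounded absolutely convex set is a Banach disk---in particular any sequentially complete, a fortiori quasi-complete, space---has the Banach--Mackey property: by Mackey's theorem a $\sigma(X,X')$-bounded set is bounded, its closed absolutely convex hull is then a bounded Banach disk, and a bounded Banach disk is $\beta(X,X')$-bounded by the Banach--Mackey theorem (see \cite{jarchow1981,bonet1987}). For~(a): by \cite[I.1.27 Remark, p.~19]{cooper1978} the space $(U,\gamma)$ is a gDF-space, hence quasi-$c_{0}$-barrelled by definition; since the countable union of compact subsets of $\Omega$ is relatively compact and $(U,\gamma)$ is gDF, \cite[Corollary, p.~230]{schmets1987} gives that $(\mathrm{C}(\Omega;(U,\gamma)),\tau_{\operatorname{co}})$ is a gDF-space too, in particular quasi-$c_{0}$-barrelled.

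For~(b): one uses that $\gamma\leq\tau_{\|\cdot\|}$ and that the bounded subsets of $(U,\gamma)$ coincide with the $\|\cdot\|$-bounded ones, a standard property of the mixed topology (see \cite[Section~2.1]{wiweger1961}, \cite[I.1]{cooper1978}). Consequently every $\gamma$-closed, $\gamma$-bounded, absolutely convex $D\subseteq U$ is $\|\cdot\|$-closed, $\|\cdot\|$-bounded and absolutely convex, hence---$(U,\|\cdot\|)$ being complete---the space $U_{D}$ spanned by $D$ and gauged by $D$ is a Banach space. Thus $(U,\gamma)$ is locally complete, and its Banach--Mackey property follows from the principle above.

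The crux of~(c) and~(d) is the elementary observation that if $\Omega$ is a $k_{\R}$-space and $V$ a Hausdorff locally convex space, then $g\colon\Omega\to V$ is continuous as soon as $g_{\mid K}$ is continuous for each compact $K\subseteq\Omega$: for a fixed $x_{0}$ and a continuous seminorm $p$ on $V$ the real map $x\mapsto p(g(x)-g(x_{0}))$ is continuous on every compact set, hence on $\Omega$ by the $k_{\R}$-property, and it vanishes at $x_{0}$. Granting this, I would show $(\mathrm{C}(\Omega;(U,\gamma)),\tau_{\operatorname{co}})$ is quasi-complete in case~(c) and sequentially complete in case~(d); either property implies local completeness, hence the Banach--Mackey property. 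In~(c), given a $\tau_{\operatorname{co}}$-closed, $\tau_{\operatorname{co}}$-bounded $M$ and a $\tau_{\operatorname{co}}$-Cauchy net $(f_{\iota})$ in $M$, for each $x\in\Omega$ the net $(f_{\iota}(x))_{\iota}$ is Cauchy and lies in the $\gamma$-closed absolutely convex hull of the bounded set $\{f(x)\mid f\in M\}$, which is complete by quasi-completeness of $(U,\gamma)$, so it converges to some $g(x)$; then $f_{\iota}\to g$ uniformly on compacta, each $g_{\mid K}$ is a uniform limit of continuous maps hence continuous, so $g\in\mathrm{C}(\Omega;(U,\gamma))$ by the observation and $g\in M$ by closedness. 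Case~(d) repeats this with a Cauchy \emph{sequence} $(f_{n})$---here $(f_{n}(x))_{n}$ is a Cauchy sequence, so sequential completeness of $(U,\gamma)$ suffices and boundedness of the set is not needed---using that a locally compact Hausdorff space is a $k_{\R}$-space.

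The main obstacle is making sure the completeness hypotheses genuinely propagate: in~(b) one needs ``$\gamma$-bounded $=\|\cdot\|$-bounded'' and ``$\gamma$-closed $\Rightarrow\|\cdot\|$-closed'' to pass from $(U,\|\cdot\|)$ to $(U,\gamma)$, and in~(c)/(d) one needs the pointwise limit constructed above to be $\tau_{\operatorname{co}}$-continuous, which is exactly where the $k_{\R}$-assumption on $\Omega$ (and quasi- resp.\ sequential completeness of $(U,\gamma)$) is spent. Once these are in place, the remaining steps are routine reductions to the characterisations of local completeness and the Banach--Mackey theorem recorded in \cite{jarchow1981,bonet1987}.
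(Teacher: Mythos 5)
Your proof is correct, and for (a) and the first half of (b) it is essentially the paper's argument: the gDF-property of $(U,\gamma)$ (Cooper) and of $(\mathrm{C}(\Omega;(U,\gamma)),\tau_{\operatorname{co}})$ (Schmets) gives quasi-$c_{0}$-barrelledness, and a $\gamma$-closed $\gamma$-bounded disk is $\|\cdot\|$-closed and $\|\cdot\|$-bounded, hence a Banach disk, so $(U,\gamma)$ is locally complete. The differences lie in how you get from there to the Banach--Mackey property and how you obtain completeness of the function space. Where the paper passes from local completeness through fast completeness and a result of Qiu, you invoke the classical Banach--Mackey theorem directly, combined with Mackey's theorem on weakly bounded sets and the characterisation of local completeness via closed bounded Banach disks; both are sound, yours being the more classical and self-contained route. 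For (c) the paper simply cites Hollstein for quasi-completeness of $(\mathrm{C}(\Omega;(U,\gamma)),\tau_{\operatorname{co}})$, whereas you reprove it by hand: pointwise limits of a Cauchy net in a closed bounded set exist inside the complete closed absolutely convex hulls of the (bounded) evaluation sets, uniform Cauchyness upgrades this to uniform convergence on compacta, and the $k_{\R}$-property, applied through the seminorm trick $x\mapsto p(g(x)-g(x_{0}))$, yields continuity of the limit, which lies in the closed set. For (d) the divergence is largest: the paper identifies $(\mathrm{C}(\Omega;(U,\gamma)),\tau_{\operatorname{co}})$ with the $\varepsilon$-product $(\mathrm{C}(\Omega),\tau_{\operatorname{co}})\varepsilon(U,\gamma)$, using the metric convex compactness property of sequentially complete spaces together with local compactness and second countability of $\Omega$, and then quotes sequential completeness of $\varepsilon$-products, while you repeat the elementary argument of (c) with sequences; your version only uses that a locally compact Hausdorff space is a $k_{\R}$-space and never needs second countability, so it is more elementary and slightly more general, at the price of not connecting to the $\varepsilon$-product machinery used elsewhere in the appendix.
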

\begin{proof}
(a) $(U,\gamma)$ is a gDF-space by \cite[I.1.27 Remark, p.~19]{cooper1978} and so 
quasi-$c_{0}$-barrelled by \cite[p.~257]{jarchow1981}. Further, the space 
$(\mathrm{C}(\Omega;(U,\gamma)),\tau_{\operatorname{co}})$ is a 
gDF-space by \cite[Corollary, p.~230]{schmets1987} since the countable union of compact subsets of 
$\Omega$ is relatively compact and $(U,\gamma)$ a gDF-space. 
Hence $(\mathrm{C}(\Omega;(U,\gamma)),\tau_{\operatorname{co}})$ is also quasi-$c_{0}$-barrelled.

(b) First, let $B\subset U$ be absolutely convex, $\gamma$-bounded and $\gamma$-closed. To prove that $(U,\gamma)$ is 
locally complete, we need to show that $B$ is a Banach disk by \cite[10.2.1 Proposition, p.~197]{jarchow1981}. 
Since $B$ is $\gamma$-bounded, it is also $\|\cdot\|$-bounded by \cite[I.1.11 Proposition, p.~10]{cooper1978}. Further, 
$B$ is $\|\cdot\|$-closed since it is $\gamma$-closed and $\gamma$ is coarser than the norm topology $\tau_{\|\cdot\|}$. 
This implies that $B$ is a Banach disk by \cite[10.2.1 Proposition, p.~197]{jarchow1981} 
since the Banach space $(U,\|\cdot\|)$ is locally complete.

Second, since $(U,\gamma)$ is locally complete, it is fast complete by \cite[Proposition 5.1.6, p.~152]{bonet1987} and 
\cite[Definition 1 (a), p.~216--217]{qiu1991}. Hence $(U,\gamma)$ has the Banach--Mackey property 
by \cite[Corollary, p.~217]{qiu1991}.

(c) The space $(\mathrm{C}(\Omega;(U,\gamma)),\tau_{\operatorname{co}})$ is quasi-complete by 
\cite[p.~14]{hollstein1982}. Since quasi-complete spaces are 
locally complete, our statement follows as in the second part of the proof of (b). 

(d) By \cite[4.2.12 Example, p.~47]{kruse2023} we have that the spaces 
$(\mathrm{C}(\Omega),\tau_{\operatorname{co}})\varepsilon (U,\gamma)$ and 
$(\mathrm{C}(\Omega;(U,\gamma)),\tau_{\operatorname{co}})$ are topologically isomorphic since the sequentially 
complete space $(U,\gamma)$ has the metric convex compactness property and $\Omega$ is locally compact and 
second-countable. Therefore the space $(\mathrm{C}(\Omega;(U,\gamma)),\tau_{\operatorname{co}})$ 
is sequentially complete by \cite[Satz 10.3, p.~234]{kaballo2014}. Since sequentially complete spaces are 
locally complete, our statement follows as in the second part of the proof of (b). 
\end{proof}

\begin{cor}\label{cor:c0_barrelled}
Let $\Omega$ be a Hausdorff $\mu$-$k_{\R}$-space such that the countable union of compact subsets of 
$\Omega$ is relatively compact and $(U,\|\cdot\|,\tau)$ a Saks space. 
Then the following assertions hold.
\begin{enumerate}
\item[(a)] If $(U,\|\cdot\|)$ is complete and $(U,\gamma)$ a Mackey space, then $(U,\gamma)$ is $c_{0}$-barrelled. 
\item[(b)] If $(U,\gamma)$ is a complete Mackey space, then 
$(\mathrm{C}(\Omega;(U,\gamma)),\tau_{\operatorname{co}})$ is $c_0$-barrelled.
\end{enumerate}
\end{cor}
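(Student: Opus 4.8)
The plan is to isolate one general fact and then merely match hypotheses. The fact is: \emph{if $X$ is a Mackey space with the Banach--Mackey property, then $X$ is $c_{0}$-barrelled} (the quasi-$c_{0}$-barrelledness that Proposition~\ref{prop:Saks_quasi_c0_barrelled}~(a) hands us for free will not even be needed for this implication). Granting it, part~(a) is immediate: $(U,\gamma)$ has the Banach--Mackey property by Proposition~\ref{prop:Saks_quasi_c0_barrelled}~(b), since $(U,\|\cdot\|)$ is complete, and it is a Mackey space by assumption. For part~(b) set $Y\coloneqq(\mathrm{C}(\Omega;(U,\gamma)),\tau_{\operatorname{co}})$; since $\Omega$ is a $\mu$-$k_{\R}$-space, hence a $k_{\R}$-space, and $(U,\gamma)$ is complete, hence quasi-complete, Proposition~\ref{prop:Saks_quasi_c0_barrelled}~(c) gives that $Y$ has the Banach--Mackey property, while Proposition~\ref{prop:cont_space_mackey}, applied to the $\mu$-$k_{\R}$-space $\Omega$ and the complete Mackey space $(U,\gamma)$, shows that $Y$ is a Mackey space; the general fact then applies to $Y$.

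To prove the general fact, let $(f_{n})_{n\in\N}$ be a $\sigma(X',X)$-null sequence in $X'$; we must show it is equicontinuous. Because $X$ has the Banach--Mackey property, $(X',\sigma(X',X))$ is locally complete (cf.~\cite{qiu1991}), so the null sequence $(f_{n})_{n\in\N}$ can be absorbed into a bounded Banach disk: choose $\lambda_{n}\uparrow\infty$ with $\lambda_{n}f_{n}\to 0$ still in $\sigma(X',X)$, let $E$ be a Banach disk absorbing the bounded set $\{\lambda_{n}f_{n}\mid n\in\N\}$, rescaled so that $\lambda_{n}f_{n}\in E$ for all $n$; then in the Banach space $X'_{E}$ (the linear span of $E$, normed by the gauge of $E$) we have $\|f_{n}\|_{E}\leq\lambda_{n}^{-1}\to 0$. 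Hence $\{f_{n}\mid n\in\N\}\cup\{0\}$ is compact in $X'_{E}$, so its closed absolutely convex hull $D_{E}$ in $X'_{E}$ is compact by Mazur's theorem. As $E$ is $\sigma(X',X)$-bounded, the inclusion $X'_{E}\hookrightarrow(X',\sigma(X',X))$ is continuous, so $D_{E}$ is $\sigma(X',X)$-compact, in particular $\sigma(X',X)$-closed; being absolutely convex and containing every $f_{n}$, it contains the $\sigma(X',X)$-closed absolutely convex hull $D$ of $\{f_{n}\mid n\in\N\}$, which is therefore $\sigma(X',X)$-compact too. Since $X$ carries the Mackey topology $\mu(X,X')$, i.e.\ the topology of uniform convergence on the absolutely convex $\sigma(X',X)$-compact subsets of $X'$ (Mackey--Arens), the polar $D^{\circ}$ of $D$ in $X$ is a $0$-neighbourhood; by the bipolar theorem $D=(D^{\circ})^{\circ}$, so $D$, and a fortiori $(f_{n})_{n\in\N}$, is equicontinuous.

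The main obstacle is precisely the middle step of the second paragraph: upgrading a merely $\sigma(X',X)$-null sequence to a $\sigma(X',X)$-compact absolutely convex set containing it (note that $\sigma(X',X)$-null does \emph{not} imply $\beta(X',X)$-null, so quasi-$c_{0}$-barrelledness cannot be invoked directly, and that Krein's theorem is unavailable because $(X',\sigma(X',X))$ is far from complete). The Banach--Mackey hypothesis is exactly what removes this obstacle, via local completeness of $(X',\sigma(X',X))$ and the detour through the auxiliary Banach space $X'_{E}$. Everything else is bookkeeping: a $\mu$-$k_{\R}$-space is a $k_{\R}$-space, a complete space is quasi-complete, and the Mackey and Banach--Mackey requirements of Propositions~\ref{prop:cont_space_mackey} and~\ref{prop:Saks_quasi_c0_barrelled} are met under the hypotheses of (a) and (b).
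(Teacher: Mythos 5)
Your first paragraph (reducing (a) and (b) to one abstract statement and checking the hypotheses of \prettyref{prop:cont_space_mackey} and \prettyref{prop:Saks_quasi_c0_barrelled} (b), (c)) matches the paper's bookkeeping, but the abstract statement you then rely on --- ``Mackey $+$ Banach--Mackey $\Rightarrow$ $c_{0}$-barrelled'', with the quasi-$c_{0}$-barrelledness deliberately discarded --- is exactly where the argument breaks. Two steps in your second paragraph are not justified. First, ``Banach--Mackey $\Rightarrow(X',\sigma(X',X))$ locally complete'' is not what \cite{qiu1991} provides (the paper uses it only for ``fast complete $\Rightarrow$ Banach--Mackey''), and it is the crux, not a citation-level fact: it amounts to dual local completeness, which is strictly stronger than the Banach--Mackey property in general. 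Second, and independently, even when the weak-$*$ dual \emph{is} locally complete you cannot ``choose $\lambda_{n}\uparrow\infty$ with $\lambda_{n}f_{n}\to 0$ still in $\sigma(X',X)$'': local completeness does not give the Mackey convergence condition. Take $X=c_{0}$, $f_{n}=e_{n}\in\ell^{1}$: the weak-$*$ dual $(\ell^{1},\sigma(\ell^{1},c_{0}))$ is locally complete, yet for any $\lambda_{n}\to\infty$ the set $\{\lambda_{n}e_{n}\}$ is not even $\sigma(\ell^{1},c_{0})$-bounded (weak-$*$ bounded equals norm bounded there), so no rescaled sequence is weak-$*$ null. Hence the passage from a $\sigma(X',X)$-null sequence to a Banach disk on which it is norm-null collapses; the concluding compactness/Mackey step is fine, but it never gets fed.

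Worse, the ``general fact'' itself is false, so no repair that ignores quasi-$c_{0}$-barrelledness can work. Let $Y\subset\ell^{\infty}$ be the linear span of the indicator sequences $\chi_{A}$, $A\subseteq\N$ (the finite-range sequences), and $X\coloneqq(\ell^{1},\mu(\ell^{1},Y))$. Then $X$ is a Mackey space with $X'=Y$, and it has the Banach--Mackey property: a $\sigma(\ell^{1},Y)$-bounded set is pointwise bounded on all indicators, hence $\ell^{1}$-norm bounded by Nikodym's boundedness theorem, while $\sigma(Y,\ell^{1})$-bounded sets are sup-norm bounded, so $\sigma$-bounded sets are $\beta$-bounded. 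However, the sequence $f_{n}\coloneqq\chi_{\{n,n+1,\dots\}}$ is $\sigma(Y,\ell^{1})$-null but not equicontinuous: equicontinuity in the Mackey space $X$ would put all $f_{n}$ inside an absolutely convex $\sigma(Y,\ell^{1})$-compact set $K\subset Y$; the partial sums $\sum_{n\leq N}2^{-n}f_{n}$ then lie in $K$ and must have a $\sigma(Y,\ell^{1})$-cluster point in $K\subset Y$, yet they converge in $\sigma(\ell^{\infty},\ell^{1})$ to $(1-2^{-i})_{i\in\N}$, a sequence of infinite range which is not in $Y$ --- a contradiction. So Mackey $+$ Banach--Mackey does not imply $c_{0}$-barrelled. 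This is precisely why the paper keeps the quasi-$c_{0}$-barrelledness delivered by \prettyref{prop:Saks_quasi_c0_barrelled} (a) (via the gDF structure, which also uses the hypothesis on countable unions of compact sets that your route drops) and concludes with \cite[Lemma 3.2, p.~677]{bosch2000}, which upgrades a quasi-$c_{0}$-barrelled (Mackey) space with the Banach--Mackey property to a $c_{0}$-barrelled one. To fix your proof you must reinstate that ingredient, either by citing this lemma as the paper does or by proving it; the stronger statement you set out to prove from scratch is not available.
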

\begin{proof}
(a) The Mackey space $(U,\gamma)$ is quasi-$c_{0}$-barrelled and has the Banach--Mackey property 
by \prettyref{prop:Saks_quasi_c0_barrelled} (a) and (b). Thus it is $c_{0}$-barrelled 
by \cite[Lemma 3.2, p.~677]{bosch2000}.

(b) $(\mathrm{C}(\Omega;(U,\gamma)),\tau_{\operatorname{co}})$ is a quasi-$c_{0}$-barrelled Mackey space 
with Banach--Mackey property 
by \prettyref{prop:cont_space_mackey}, \prettyref{prop:Saks_quasi_c0_barrelled} (a) and (c). 
We deduce that the space $(\mathrm{C}(\Omega;(U,\gamma)),\tau_{\operatorname{co}})$ is $c_0$-barrelled 
by \cite[Lemma 3.2, p.~677]{bosch2000}.
\end{proof}

\prettyref{cor:c0_barrelled} (a) is interesting in itself since the $c_{0}$-barrelledness of $(U,\gamma)$ is 
a part of a sufficient condition that guarantees the existence of a dual bi-continuous semigroup of a 
bi-continuous semigroup in the sun dual theory for bi-continuous semigroups on sequentially complete Saks spaces 
(see \cite[3.8 Theorem (b), p.~247]{kruse_schwenninger2024}). 
Using that a (non-empty, non-singleton) compact interval $\Omega$ fulfils all the requirements 
of \prettyref{prop:C_spaces}, 
we obtain by \prettyref{rem:examples_X_for_closed_graph}, \prettyref{prop:weakly_seq_complete_dual} 
and \prettyref{cor:c0_barrelled} (b) with $U=X$ the following list of complete $3\mathrm{C}$-spaces.

\begin{cor}\label{cor:list_C_spaces} 
The following Hausdorff locally convex spaces are complete $3\mathrm{C}$-spaces. 
\begin{enumerate}
\item[(i)] Fr\'echet spaces,
\item[(ii)] barrelled $B_{r}$-complete gDF-spaces, in particular strong duals of reflexive Fr\'echet spaces, 
\item[(iii)] semireflexive Mackey gDF-spaces, in particular semireflexive Mackey--Saks spaces, 
\item[(iv)] complete Mackey $L_{r}$-spaces $X$ such that $(\mathrm{C}([0,r_{0}];X),\tau_{\operatorname{co}})$ 
is a Mazur space for some $r_{0}>0$,
\item[(v)] complete Mackey--Saks $\theta_{r}$-spaces for $\theta=\theta_{c_{0}}$. 
\end{enumerate}
\end{cor}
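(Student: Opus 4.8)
The plan is to apply \prettyref{prop:C_spaces} once for each of the five items, always with $U=X$ and $\Omega\coloneqq [0,r]$ for some $r>0$ (the choice of $r$ being irrelevant for the $3\mathrm{C}$-space property, as noted before its definition). A non-empty compact interval is compact and metrisable, hence a hemicompact Hausdorff $k_{\R}$-space and a $\mu$-$k_{\R}$-space, it is second-countable and locally compact, it contains an infinite compact subset, and the (countable, indeed arbitrary) union of its compact subsets is relatively compact. Consequently every topological requirement on $\Omega$ occurring in any part of \prettyref{prop:C_spaces} and in the auxiliary results \prettyref{prop:weakly_seq_complete_dual} and \prettyref{cor:c0_barrelled} is automatically met. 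What remains, in each case, is to verify that $X$ is complete and that $U=X$ and $(\mathrm{C}([0,r];X),\tau_{\operatorname{co}})$ carry the locally convex properties demanded by the relevant part of \prettyref{prop:C_spaces}.

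For (i) a Fréchet space is complete and webbed by \prettyref{rem:examples_X_for_closed_graph} (a), so \prettyref{prop:C_spaces} (i) applies. For (ii) a $B_{r}$-complete space is complete, so a barrelled $B_{r}$-complete gDF-space satisfies the hypotheses of \prettyref{prop:C_spaces} (ii) with $U=X$; for the particular case, \prettyref{rem:examples_X_for_closed_graph} (b), (d), (e) show that the strong dual of a reflexive Fréchet space is a barrelled complete DF-space, hence a gDF-space, and $B$-complete, hence $B_{r}$-complete. For (iii) a semireflexive gDF-space is $B$-complete by \prettyref{rem:examples_X_for_closed_graph} (e), hence complete, so a semireflexive Mackey gDF-space meets the hypotheses of \prettyref{prop:C_spaces} (iii) with $U=X$; the particular case uses that $(X,\gamma)$ is a gDF-space whenever $(X,\|\cdot\|,\tau)$ is a Saks space (\prettyref{rem:examples_X_for_closed_graph} (d)), so a semireflexive Mackey--Saks space is a semireflexive Mackey gDF-space.

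For (iv), with $U=X$ a complete Mackey $L_{r}$-space, the only ingredient of \prettyref{prop:C_spaces} (iv) left to supply is the weak sequential completeness of $(\mathrm{C}([0,r];X),\tau_{\operatorname{co}})'$; this is precisely the conclusion of \prettyref{prop:weakly_seq_complete_dual}, whose hypotheses hold since $X$ is a complete Mackey space and $(\mathrm{C}([0,r];X),\tau_{\operatorname{co}})$ is a Mazur space---by assumption for $r=r_{0}$, and hence for every $r>0$ via the topological isomorphism $\mathrm{C}([0,r_{0}];X)\cong\mathrm{C}([0,r];X)$. For (v), with $U=X$ a complete Mackey--Saks $\theta_{r}$-space for $\theta=\theta_{c_{0}}$, the only missing hypothesis of \prettyref{prop:C_spaces} (v) is that $(\mathrm{C}([0,r];X),\tau_{\operatorname{co}})$ be $c_{0}$-barrelled, which is exactly \prettyref{cor:c0_barrelled} (b) applied to the complete Mackey--Saks space $X$. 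In each of the five cases the conclusion of \prettyref{prop:C_spaces} is that every closed linear operator $C\colon(\mathrm{C}([0,r];X),\tau_{\operatorname{co}})\to X$ is continuous, i.e.~$X$ is a $3\mathrm{C}$-space, and completeness has been established along the way.

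Since this is a bookkeeping corollary assembling results already proved, there is no genuine obstacle. The only points needing attention are, first, checking once and for all that a compact interval fulfils all the topological hypotheses of \prettyref{prop:C_spaces} simultaneously, and second, in cases (iv) and (v), routing the auxiliary statements \prettyref{prop:weakly_seq_complete_dual} and \prettyref{cor:c0_barrelled} (b) so that exactly the hypothesis required by the corresponding part of \prettyref{prop:C_spaces} is produced; one should also not forget to record completeness of $X$ in each item.
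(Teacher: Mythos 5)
Your proposal is correct and follows essentially the same route as the paper: the corollary is obtained exactly by specialising \prettyref{prop:C_spaces} to $\Omega=[0,r]$ and $U=X$, feeding in \prettyref{rem:examples_X_for_closed_graph} for (i)--(iii), \prettyref{prop:weakly_seq_complete_dual} for (iv), and \prettyref{cor:c0_barrelled} (b) for (v), with completeness checked case by case. Nothing is missing.
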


\bibliography{biblio_C_max_reg}
\bibliographystyle{plainnat}
\end{document}